\documentclass[12pt]{article}
\usepackage{amsthm}
\usepackage{varioref}
\usepackage{amsmath,amstext,amsthm,a4,amssymb,amscd}   
\usepackage[T1]{fontenc}
\usepackage[utf8]{inputenc}
\usepackage{lmodern}
\usepackage{array}
\usepackage{latexsym}
\usepackage{fancyhdr}
\usepackage{mathrsfs}\let\cal\mathscr
\usepackage[all]{xy}
\usepackage{makeidx}   
\usepackage{color}
\usepackage{pifont}
\usepackage{amsfonts,amssymb}
\usepackage{hyperref}
\usepackage[numbers,square]{natbib}
\usepackage{verbatim}
\usepackage{relsize}

\usepackage{avant}
\usepackage[T1]{fontenc}      

\usepackage{amsfonts,amssymb}  
 
\usepackage{graphicx}

\usepackage{natbib}
\setlength{\bibsep}{0.0pt}



\newcommand \Om {\Omega}
\newcommand \om {\omega}
\newcommand \0 {\emptyset}

\renewcommand \leq {\leqslant}
\renewcommand \geq {\geqslant}
\newcommand{\longhookrightarrow}{\lhook\joinrel\longrightarrow}

\DeclareMathOperator{\Vol}{Vol}
\DeclareMathOperator{\End}{End}

\DeclareMathOperator{\Tr}{Tr}

\DeclareMathOperator{\Ker}{Ker}
\DeclareMathOperator{\rk}{rk}
\DeclareMathOperator{\GL}{GL}
\DeclareMathOperator{\Spec}{Spec}

\DeclareMathOperator{\Span}{Span}

\DeclareMathOperator{\Ric}{Ric}

\DeclareMathOperator{\Aut}{Aut}
\DeclareMathOperator{\Herm}{Herm}
\DeclareMathOperator{\scal}{scal}
\DeclareMathOperator{\Prod}{Prod}
\DeclareMathOperator{\Met}{Met}
\DeclareMathOperator{\Hilb}{Hilb}
\DeclareMathOperator{\Kod}{Kod}
\DeclareMathOperator{\FS}{FS}
\DeclareMathOperator{\ev}{ev}

\newcommand \dbar {\overline{\partial}}

\newcommand \< {\mathcal{h}}
\renewcommand \> {\mathcal{i}}

\newcommand \cinf {\CC^\infty}

\newcommand \Id {{\rm Id}}

\renewcommand \epsilon {\varepsilon}

\newcommand \CC {{\cal C}}
\newcommand \BB {{\cal B}}

\newcommand \HH {{\cal H}}

\newcommand \TT {{\cal T}}

\def\cL{\mathscr{L}}

\def\cB{\mathcal{B}}

\def\cL{\mathscr{L}}

\def\cT{\mathscr{T}}

\newcommand{\til}[1]{\widetilde{#1}}

\newcommand \dt {\frac{\partial}{\partial t}}

\newcommand \D[1] {\frac{\partial}{\partial #1}}

\newcommand \R {\mathbb R}
\newcommand \IP {\mathbb P}

\newcommand \CP {\mathbb C\mathbb P}
\newcommand \C {\mathbb C}

\newcommand \N {\mathbb N}

\newcommand \fl {\rightarrow}

\newcommand \ignore[1] {}

\theoremstyle{plain}
\newtheorem{theorem}{Theorem}[section]
\newtheorem{lem}[theorem]{Lemma}
\newtheorem{cor}[theorem]{Corollary}
\newtheorem{prop}[theorem]{Proposition}
\theoremstyle{definition}
\newtheorem{defi}[theorem]{Definition}
\newtheorem{ex}[theorem]{Example}

\newtheorem{rmk}[theorem]{Remark}

\numberwithin{equation}{section}
%

\hypersetup{
    colorlinks=true,
    linkcolor=red,
    filecolor=magenta,      
    urlcolor=cyan,
}

\begin{document}

\title{\bf{Quantization of symplectic fibrations and canonical metrics}}
\author{Louis IOOS$^1$ and Leonid POLTEROVICH$^2$}
\date{}
\maketitle
\newcommand{\Addresses}{{
  \bigskip
  \footnotesize

  \textsc{Université de Cergy-Pontoise, 95000 Cergy,
France}\par\nopagebreak
  \textit{E-mail address}: \texttt{louis.ioos@cyu.fr}
  
  	\medskip
	
	  \textsc{School of Mathematical Sciences, Tel Aviv University, Ramat Aviv, Tel Aviv 69978,
Israel}\par\nopagebreak
  \textit{E-mail address}: \texttt{polterov@tauex.tau.ac.il}

}}

\footnotetext[1]{Partially supported by the European Research Council Starting grant 757585}
\footnotetext[2]{Partially supported by the Israel Science Foundation
grant 1102/20}

\begin{abstract}
We relate Berezin-Toeplitz quantization of higher rank vector bundles
to quantum-classical hybrid systems and quantization in
stages of symplectic fibrations. We apply this picture to the analysis
and geometry of vector bundles, including the spectral gap of the Berezin
transform and the convergence rate of Donaldson's iterations towards
balanced metrics on stable vector bundles. We also establish
refined estimates in the scalar case to compute the
rate of Donaldson's iterations towards
balanced metrics on Kähler manifolds with
constant scalar curvature.


\end{abstract}

\section{Introduction}

The goal of a \emph{quantization process} is
to associate a system of quantum mechanics with any given
system of classical mechanics, in such a way that one recovers the
laws of classical mechanics from the laws of quantum mechanics at
the limit when the \emph{Planck constant} $\hbar$ tends to $0$.
However, in many concrete physical situations, one also
encounters \emph{quantum-classical hybrid systems} \cite{Elz12}, where some
degrees of freedom remain quantized while the others are taken
to be purely classical. The archetypical example is the celebrated
\emph{Stern-Gerlach experiment}, which describes in first approximation
a classical particle with quantized spin exhibiting a highly
non-classical behavior. From the point of view of quantization,
such quantum-classical hybrids are naturally described as the intermediate
step in
the process of \emph{quantization in stages} \cite[\S\,4.1]{GLS96}, where
instead of quantizing all degrees of freedom at once, one first
quantizes some specific degrees of freedom, for instance the internal
degrees of freedom of a classical particle, and then
quantizes the remaining classical degrees of freedom of 
a quantum-classical hybrid.

In this paper, we consider the process of quantization in
stages in the context of \emph{Berezin-Toeplitz quantization}
\cite{Ber74,BMS94} of a compact \emph{quantizable} symplectic manifold
$(X,\om)$, so that the cohomology class $[\om]\in H^2(X,\R)$ is integral.
Assuming the existence of an integrable complex structure $J\in\End(TX)$
compatible
with $\om$, this implies the existence of a holomorphic Hermitian line
bundle $(L,h^L)$ \emph{prequantizing} $(X,\om)$, so that its
\emph{Chern curvature} $R^L\in\Om^2(X,\C)$
satisfies
\begin{equation}\label{preq}
\om=\frac{\sqrt{-1}}{2\pi} R^L\,.
\end{equation}
Given a smooth volume form $d\nu_X$ on $X$, one can then define
the associated \emph{Hilbert space of quantum states}
\begin{equation}\label{quantspace}
\HH:=\left(H^0(X,L),\<\cdot,\cdot\>\right)
\end{equation}
as the finite-dimensional space $H^0(X,L)$ of
holomorphic sections of $L$ endowed with the $L_2$-Hermitian product
$\<\cdot,\cdot\>$ defined for any $s_1,\,s_2\in\HH$ by
\begin{equation}\label{L2}
\<s_1,s_2\>:=\int_X\,h^L(s_1(x),s_2(x))\,d\nu_X(x)\,.
\end{equation}
In the context of quantization, 
the space of smooth functions
$\cinf(X,\R)$ over $X$ represents classical observables, describing
classical mechanics over the phase space $(X,\om)$, while
the space of Hermitian operators $\Herm(\HH)$ acting on $\HH$ represents
quantum observables, describing quantum mechanics over the Hilbert
space of quantum states $\HH$.
The \emph{Berezin-Toeplitz quantization} of $(X,p\,\om)$ is
the map $T:\cinf(X,\R)\to\Herm(\HH)$ defined for any $f\in\cinf(X,\R)$
and $s\in\HH$ by
\begin{equation}\label{BTdefintro}
T(f)=P\,fs\,,
\end{equation}
where $fs\in\cinf(X,L)$ is the multiplication of $s\in\HH\subset\cinf(X,L)$
by $f$ and $P:\cinf(X,L)\to\HH$ denotes the orthogonal projection with respect
to \eqref{L2}.
In this context, one usually introduces a \emph{quantum number} $p\in\N$,
and considers the quantization of
the symplectic manifold $(X,p\,\om)$ for any $p\in\N$, which is
prequantized by the $p$-th tensor power $L^p:=L^{\otimes p}$ endowed with
the induced Hermitian metric. The integer 
$p\in\N$ can be thought as inversely proportional to the Planck
constant $\hbar$, and the
limiting regime when $p$ tends to infinity
is called the \emph{semi-classical limit}.

Let us now consider a \emph{symplectic fibration} $\pi:(M,\om^M)\to (X,\om)$,
where $(X,\om)$ is a compact quantizable symplectic manifold and $M$
is a compact manifold endowed with a closed $2$-form $\om^M$
which restricts to a quantizable symplectic form in each fibre.
Assume now that both manifolds admit integrable complex structures
compatible with all the data, let $\nu_\pi$ be a volume form in the fibres
of $\pi:M\to X$ depending smoothly on the base and
let $(\cL,h^{\cL})$ be a holomorphic Hermitian
line bundle over $M$ prequantizing the fibres of $\pi:(M,\om^M)\to (X,\om)$.
Up to replacing $\om^M$ by $r\,\om^M$
and $\cL$ by $\cL^r$ for $r\in\N$ large enough,
we can also assume that the dimension of the space of quantum states associated
with $(\pi^{-1}(x),\om^M|_{\pi^{-1}(x)})$ does not depend on
$x\in X$.
We then get an induced holomorphic Hermitian vector bundle
$(E,h^E)$ over
$(X,\om)$,
called the \emph{quantum-classical hybrid} associated with
$\pi:(M,\om^M)\to (X,\om)$, whose fibre over any $x\in X$ is
given by
\begin{equation}\label{E_x}
E_x:=H^0(\pi^{-1}(x),\cL|_{\pi^{-1}(x)})\,,
\end{equation}
and whose Hermitian metric $h^E$ at $x\in X$ is given
for any $s_1,\,s_2\in E_x$ by
\begin{equation}\label{L2hE}
h^E_x(s_1,s_2):=\int_{\pi^{-1}(x)}\,h^{\cL}(s_1(y),s_2(y))
\,d\nu_{\pi}(y)\,.
\end{equation}
The space $\cinf(X,\Herm(E))$ of Hermitian endomorphisms of $E$ over
$X$ then represents
the space of quantum observables in the fibres which remain classical
along the base, and we get a natural Berezin-Toeplitz quantization map
$T_{\pi}:\cinf(M,\R)\to\cinf(X,\Herm(E))$ in the fibres.

As explained for instance in \cite[\S\,4.1]{GLS96}, the construction
of the quantum-classical hybrid associated with a symplectic fibration
is the first step of a two-step process called \emph{quantization in
stages}. As a second step, we introduce a quantum number $p\in\N$,
a holomorphic Hermitian line bundle $(L,h^L)$ prequantizing $(X,\om)$
and a smooth volume form $d\nu_X$ over $X$, then consider the Hilbert space
$\HH_p$ of quantum states to be the
space $H^0(X,E_p)$ of holomorphic sections of the tensor product
\begin{equation}\label{Ep}
E_p:=E\otimes L^p\,,
\end{equation}
endowed with the $L_2$-Hermitian product \eqref{L2} using instead the
Hermitian metric $h^{E_p}$ on $E_p$ induced by $h^L$ and $h^E$.
Then by definition \eqref{E_x} of $E$, we have a natural identification
\begin{equation}\label{Hp=HEp}
H^0(X,E_p)\simeq H^0(M,\cL\otimes\pi^*L^p)\,,
\end{equation}
preserving the respective natural $L_2$-Hermitian products
as in formula \eqref{L2} for the smooth volume form $d\nu_M$ on $M$ defined
by 
\begin{equation}\label{nupidef}
d\nu_M:=d\nu_\pi\pi^*\,d\nu_X\,.
\end{equation}
Via formula \eqref{preq} applied to $\cL\otimes\pi^*L^p$,
the identification \eqref{Hp=HEp} thus states that
the Hilbert space $\HH_p$ of quantum states associated with \eqref{Ep}
as above naturally coincides with the space of quantum states
associated with the symplectic manifold $(M,\om^M+p\,\om)$, for $p\in\N$
big enough. 

In Section \ref{BTsec}, we introduce the general setting of Berezin-Toeplitz
quantization as a theory of quantum measurements, following \cite{IKPS19},
then describe the process of quantization in stages in this context.
We then study the basic properties of the Berezin-Toeplitz quantization map
for vector bundles $T_{E_p}:\cinf(X,\Herm(E))\to\Herm(\HH_p)$
introduced in \cite{MM08b},
defined as before with $\HH_p$ the
common Hilbert space \eqref{Hp=HEp} for any $p\in\N$.

In Section \ref{specsec}, we
extend to the setting of quantization in
stages our study in \cite{IKPS19} of
Berezin-Toeplitz quantization as a theory of quantum measurements.
In particular, we introduce
and study in Section \ref{specbersec} a natural notion of a
\emph{Berezin transform} $\BB_{E_p}$
associated with the Berezin-Toeplitz quantization of
a holomorphic Hermitian vector bundle
$(E,h^E)$ over $(X,p\,\om)$.
This Berezin transform is a bounded operator with discrete
spectrum acting on $\cinf(X,\Herm(E))$,
and is introduced in Definition \ref{BpdefE}
as the result of quantization followed by dequantization.
Its spectrum thus describes the \emph{quantum noise}
introduced by Berezin-Toeplitz quantization, which can be interpreted
as a global version of the \emph{Heisenberg uncertainty principle}.

To describe our main result in this respect,
consider the Riemannian metric on $X$ defined by
\begin{equation}\label{gTX}
g^{TX}:=\om(\cdot,J\cdot)\,,
\end{equation}
and recall that the induced Riemannian measure coincides with
the \emph{Liouville measure}
of the symplectic manifold $(X,\om)$, defined by
\begin{equation}\label{voldef}
d\nu:=\frac{\om^n}{n!}\,.
\end{equation}
For any holomorphic vector bundle
$(E,h^E)$,  we write
$\dbar$ for the
holomorphic $\dbar$-operator of the holomorphic vector
bundle $\End(E)$ of endomorphisms of $E$.
In that case, there is a natural extension of the Laplace-Beltrami
operator $\Delta$ of $(X,g^{TX})$, given by twice the
\emph{Kodaira Laplacian}, which acts
on the smooth sections $\cinf(X,\End(E))$
by the formula
\begin{equation}\label{KodLapdef}
\Box:=2\,\dbar^*\,\dbar\,,
\end{equation}
where $\dbar^{*}:\Om^1(X,\End(E))\to\cinf(X,\End(E))$
denotes the formal adjoint of
$\dbar:\cinf(X,\End(E))\to\Om^1(X,\End(E))$ with respect to the 
natural $L_2$-products on both spaces induced by $h^E$ and $g^{TX}$.
It has discrete positive spectrum, and we write
\begin{equation}\label{lapev}
0=\lambda_0^E\leq\lambda_1^E\leq\cdots\leq\lambda_k^E\leq\cdots
\end{equation}
for the increasing sequence of its eigenvalues counted with multiplicity.
On the other hand, let us write
\begin{equation}\label{berev}
1=\gamma_{0,p}^E\geq\gamma_{1,p}^E\geq\cdots\geq\gamma_{k,p}^E
\geq\cdots\geq 0
\end{equation}
for the decreasing sequence of the eigenvalues of the Berezin transform
$\cB_{E_p}$, counted with multiplicities.
In the special case $E=\C$, the Berezin transform $\BB_{E_p}$
reduces to the usual Berezin transform considered in \cite{IKPS19},
and the Kodaira Laplacian reduces to the usual Laplace-Beltrami
operator of $(X,g^{TX})$.
In this case, we simply remove the superscript $E$
in the notations \eqref{lapev} and \eqref{berev} for their eigenvalues.
In Section \ref{BTKEsec}, we give the proof of the following result.

\begin{theorem}\label{mainthvb}
For every integer $k\in\N$, we have the following
asymptotic estimate as $p\to +\infty$,
\begin{equation}\label{eq-LB}
\gamma_{k,p}^E =1-\frac{1}{4\pi p}\lambda_k^E + O(p^{-2})\;.
\end{equation}

In the special case $E=\C$ with
$g^{TX}$ \emph{Kähler-Einstein} and
$\nu_X$ the Liouville measure,
we have the following refined estimate as $p\fl+\infty$,
\begin{equation}\label{mainthKE}
\gamma_{k,p} =1-\frac{\lambda_k}{4\pi p}+
\frac{\lambda_k^2+2c\lambda_k}{32\pi^2p^2}+
O(p^{-3})\;.
\end{equation}
\end{theorem}

This shows in particular that the Berezin transform admits a positive
\emph{spectral gap}, controlled by the first positive
eigenvalue of the Kodaira Laplacian
at the semiclassical limit as $p\to +\infty$.
In \cite[Th.\,3.1]{IKPS19},
the estimate \eqref{eq-LB} was established in the case $E=\C$.
Theorem \ref{mainthvb} thus extends this result in two directions,
to the case of a general vector
bundle $E$ on one hand, and gives the refined estimate \eqref{mainthKE}
in the case $E=\C$ but with the additional
assumption that the metric Kähler-Einstein on the other.
The proof makes use of the quantum-classical correspondance for the
Berezin-Toeplitz
quantization of vector bundles established by Ma and Marinescu
in \cite{MM08b,MM12}.


In Section \ref{balsec},
we complete the study started in \cite[\S\,4]{IKPS19} and \cite{Ioo20,Ioo21}
of the applications of Berezin-Toeplitz quantization to
\emph{Donaldson's program}
towards the search for canonical K\"ahler metrics in complex geometry.
for the approximation of Kähler metrics of \emph{constant scalar curvature}.
In fact, Donaldson introduced
in \cite{Don01,Don05} a finite dimensional approximation of Kähler metrics of
\emph{constant scalar curvature}, given by a sequence of Hermitian inner
products on $H^0(X,L^p)$ for all $p\in\N$,
called \emph{balanced products}. Writing $\Prod(H^0(X,L^p))$
for the symmetric space of Hermitian inner products on $H^0(X,L^p)$,
balanced products are defined as
the fixed points of the dynamical system $\TT_p$ acting
on $q\in\Prod(H^0(X,L^p))$ by the formula
\begin{equation}\label{Tdefcan}
\TT_p(q)=\frac{\dim\,H^0(X,L^p)}{\Vol(X,\nu_q)}\,\int_{X}
h^{FS}_q(\cdot,\cdot)\,d\nu_q(x)\,,
\end{equation}
where $h^{FS}_q$ denotes the pullback of the
Fubini-Study metric induced
by $q$ with respect to the canonical \emph{Kodaira embedding}
$X\hookrightarrow\IP(H^0(X,L^p)^*)$ and $\nu_q$
denotes the Liouville measure associated with the symplectic form $\om_q$ defined
by \eqref{preq}.
In this context, Donaldson shows in \cite{Don01} that these balanced
metrics converge as $p\to+\infty$ towards the unique K\"ahler
metric of constant scalar curvature, when it exists and when the
holomorphic automorphism group
$\Aut(X,L)$ of $(X,J,L)$ is discrete. The convergence of
the iterations of $\TT_p$ has been established by Donaldson in \cite{Don05}
and Sano in \cite[Th.\,1.2]{San06}. In Section \ref{scalspecsec},
we establish and compute the exponential speed of convergence of these
iterations via a refined version of Theorem \ref{mainthvb}
using instead the eigenvalues
of the operator $D$ acting on $f\in\cinf(X,\R)$ by the formula
\begin{equation}\label{varscalopdef}
D f:=\dt\Big|_{t=0}\scal\left(\om+t\frac{\sqrt{-1}}{2\pi}
\dbar\partial f\right)\,,
\end{equation}
where $\om$ is the K\"ahler form associated with the constant scalar curvature
metric and the map $\scal$
sends a Kähler form to the scalar curvature of the associated Riemannian
metric \eqref{gTX}. As explained \cite[Def.\,4.3,\,Lemma\,4.4]{Sze14},
this operator is an elliptic self-adjoint
differential operator of fourth order called the \emph{Lichnerowicz operator},
and as such, it admits a discrete spectrum.

In Section \ref{vbcase}, we study the extension of
this program due to Wang in \cite{Wan05}
on the approximation of
\emph{Hermite-Einstein metrics} on a
holomorphic vector bundle $E$ over $(X,\om)$, which are the
Hermitian metrics $h^E$ on $E$ whose the Chern curvature
$R^E\in\Om^2(X,\End(E))$ satisfies
\begin{equation}\label{weakHE}
\frac{\sqrt{-1}}{2\pi}\<\om,R^E\>_{g^{TX}}=c\,\Id_E
\end{equation}
for some constant $c\in\R$, where $\<\cdot,\cdot\>_{g^{TX}}$
denotes the pairing on $\Om^2(X,\End(E))$ with values in $\End(E)$
induced by $g^{TX}$.
By a fundamental result of Uhlenbeck-Yau in \cite{UY86} and
Donaldson in \cite{Don87}, a holomorphic vector bundle $E$ is
\emph{Mumford stable}
if and only if it admits a Hermite-Einstein metric $h^E$
and if it is \emph{simple}, meaning that it cannot be decomposed
as a direct sum of holomorphic vector bundles of smaller dimension.
In this context, we fix a measure $\nu$ over $X$
and define a dynamical system $\cT_{E_p}$
acting on a Hermitian inner product
$q\in\Prod(H^0(X,E_p))$ by the formula
\begin{equation}\label{DonitflaEintro}
\cT_{E_p}(q):=\frac{\dim H^0(X,L^p)}{\Vol(X,\nu)\rk(E)}\int_X\,h^{FS}_q(\cdot,\cdot)\,d\nu(x)\,,
\end{equation}
where $h^{FS}_q$ is the pullback to $E$ of the Fubini-Study metric
induced by $q$ via the canonical
\emph{Grassmanian embedding}
$X\hookrightarrow\mathbb{G}(\rk(E),H^0(X,E_p)^*)$.
The Hermitian metric $h^{FS}_q$ is then called \emph{$\nu$-balanced}
if $q$ is a fixed point of \eqref{DonitflaEintro}.
Assuming that $\nu$ is the Liouville measure,
Wang showed in \cite{Wan05}
that $\nu$-balanced metrics on $E$ converge as $p\to+\infty$ to the
Hermite-Einstein metric up to an explicit conformal factor,
and Seyyedali established the convergence
of the iterations of \eqref{DonitflaEintro} in \cite{Sey09}.
Let us point out that the fact that the volume form appearing in \eqref{DonitflaEintro} is fixed makes the study of its fixed points
a much simpler matter than with the original dynamical system \eqref{Tdefcan}
due to Donaldson in \cite{Don05}.
In Section \ref{vbcase}, we prove the following result.

\begin{theorem}\label{gapvb}
1) Assume that $E$ is
Mumford stable and that $\nu$ is the Liouville measure.
Then there exists
$p_0\in\N$ such that for any
$p\geq p_0$ and $q\in\Prod(H^0(X,E_p))$,
there exists a $\nu$-balanced product $q_p\in\Prod(H^0(X,E_p))$
and constants $C_p>0,\,\beta_p\in(0,1)$ such that for all $r\in\N$,
we have
\begin{equation}\label{expcvest}
\textup{dist}\left(\TT_{E_p}^r(q)\,,\,q_p\right)\leq
C_p\beta^r_p\;,
\end{equation}
and such that as $p\fl+\infty$,
\begin{equation}\label{betap}
\beta_p=1-\frac{\lambda_1^E}
{4\pi p}+ o(p^{-1})\,,
\end{equation}
where $\lambda_1^E>0$ is the first positive eigenvalue of the
Kodaira Laplacian \eqref{KodLapdef} associated with
the Hermitian metric $e^f\,h^E$, where $h^E$
satisfies the Hermite-Einstein equation \eqref{weakHE}
and $f\in\cinf(X,\R)$ satisfies
$\Delta\,f=2\pi\big(\scal(\om)-\int_X\scal(\om)\,\frac{d\nu}{\Vol(X)}\big)$.

2) Assume now that $E=\C$, that $g^{TX}$ has constant scalar curvature
and that $\Aut(X,L)$ is discrete.
Then \eqref{expcvest} holds for the dynamical system \eqref{Tdefcan},
with constant $\beta_p\in(0,1)$ satisfying the following
estimate as $p\fl+\infty$,
\begin{equation}\label{betapbal}
\beta_p=1-\frac{\mu_1}{8\pi p^2}+ o(p^{-2})\,,
\end{equation}
where $\mu_1>0$ is the first positive eigenvalue of the operator
\eqref{varscalopdef}.
\end{theorem}

In \cite[Th.\,4.4, Rem.\,4.9]{IKPS19},
Part 1 of Theorem \ref{gapvb} was established in the case $E=\C$.
Theorem \ref{gapvb} thus extends this result in two directions,
to the case of a general vector
bundle $E$ on one hand, and to the case of the original notion of balanced
metrics instead of $\nu$-balanced metrics for $E=\C$ on the other.
While the rate of exponential convergence for $\nu$-balanced metrics
computed in \cite{IKPS19}
has been predicted by Donaldson in \cite{Don05}, there was in our knowledge
no such predictions concerning \eqref{betapbal} for the original
notion of balanced metrics. 

In Section \ref{momsec}, we establish remarkable identities
relating the dynamical systems \eqref{DonitflaEintro}
and \eqref{Tdefcan} to the \emph{moment maps} for balanced
embeddings introduced in \cite{Wan05} and \cite{Don01}.
In particular, we show in Remarks \ref{KMSrmk} and \ref{Finermk}
how Theorem \ref{gapvb} reduce to the
lower bounds of the differential of the balanced moment maps
established by Keller, Meyer and
Seyyedali in \cite{KMS16} and Fine in \cite{Fin12}. 
The methods presented
in this paper thus gives a natural interpretation of their lower bounds
as the quantum noise induced by Berezin-Toeplitz quantization.
Via the work of Phong and Sturm in \cite{PS04},
and as explained in \cite{Ioo20},
this gives a conceptual
explanation for the most crucial part of
the proof of the results of Donaldson in
\cite{Don01} and Wang in \cite{Wan05} on the existence of balanced metrics,
replacing a delicate
geometric argument by the use of the spectral gap of the Berezin transform.


Finally, in Section \ref{mathphi}, we describe
the Stern-Gerlach experiment as a fundamental example
of a quantum-classical hybrid, and we discuss
the
physical interpretation of the classical-quantum
correspondence for vector bundles established by Ma and Marinescu
in \cite[(0.3)]{MM12} via the
process of quantization in stages.

\medskip\noindent{\bf Acknowledgement.}
The authors wish to thank the anonymous referee for helpful suggestions,
which allowed great improvements in the presentation of this paper.

\section{Berezin-Toeplitz quantization in stages}
\label{BTsec}

In this Section, we introduce the two-step process of quantization in
stages in the context of Berezin-Toeplitz quantization, then study
the Berezin-Toeplitz quantization
of holomorphic Hermitian vector bundles as the quantization of
a quantum-classical hybrid.

In Section \ref{BTPOVMsec}, we describe the
Berezin-Toeplitz quantization of a compact symplectic manifold from the
point of view of quantum measurement theory. In Section \ref{stagesec},
we describe the process of quantization in stages in this context, and
in Section \ref{BTvbsec}, we describe the classical-quantum
correspondence for vector bundles established by Ma and Marinescu
in \cite[(0.3)]{MM12}.

\subsection{Berezin-Toeplitz quantization}
\label{BTPOVMsec}

Let $(X,\om)$ be a \emph{quantizable} compact symplectic manifold of dimension
$2n$, meaning that the cohomology class $[\om]\in\Om^2(X,\R)$
is integral. This is equivalent with the existence of a Hermitian line
bundle with Hermitian connection $(L,h,\nabla^L)$
\emph{prequantizing} $(X,\om)$, so that the curvature
$R^L\in\Om^2(X,\C)$ of $\nabla^L$ satisfies the \emph{prequantization
condition} \eqref{preq}.
We will always assume that $(X,\om)$ admits a compatible integrable
complex structure
$J\in\End(TX)$, making $(X,J,\om)$ into a \emph{Kähler manifold}
and $(L,h,\nabla^L)$ into a holomorphic Hermitian line
bundle equipped with its \emph{Chern connection}.
For any $p\in\N^*$, the symplectic manifold $(X,p\,\om)$
is again Kähler for the same complex structure, and is
prequantized by the
$p$-th tensor power $L^p:=L^{\otimes p}$ with induced
Hermitian metric $h^{L^p}$ and connection $\nabla^{L^p}$.

Let us fix a smooth volume form $d\nu_X$ on $X$, and recall that
the space of
\emph{quantum states} associated with $(X,p\,\om)$
is the finite dimensional Hilbert space $\HH_p$ of
holomorphic sections of
$L^p$ endowed with the $L_2$-Hermitian product $\<\cdot,\cdot\>_p$
defined as in \eqref{L2}.

From the point of view of quantum measurements,
a quantization process is described through
the following basic notion of \emph{Positive Operator Valued Measure},
abbreviated into POVM, where $\cB(X)$ denotes the $\sigma$-algebra
of Borel sets of $X$.

\begin{defi}\label{POVMdef}
A \emph{POVM}
acting on $\HH_p$ over $X$ is a $\sigma$-additive map
\begin{equation}\label{POVMfla}
W_p:\cB(X)\longrightarrow\Herm(\HH_p)
\end{equation}
with values in positive operators, satisfying
$W_p(\0)=0$ and $W_p(X)=\Id_{\HH_p}$.
\end{defi}

In the context of Berezin-Toeplitz
quantization, the fundamental tool used to relate
quantum states to classical ones is the
the following \emph{evaluation map} at $x\in X$,
\begin{equation}\label{ev}
\begin{split}
\ev_x:\HH_p&\longrightarrow L^p_x\\
s~&\longmapsto s(x)\,.
\end{split}
\end{equation}
We write $\ev^*_{x}:L^p_x\fl\HH_p$ for its dual
with respect to the Hermitian metric $h^{L^p}$ on $L^p_x$
and to the $L_2$-Hermitian product $\<\cdot,\cdot\>_p$ on $\HH_p$.

\begin{prop}\label{BTPOVMdef}
The $\Herm(\HH_p)$-operator valued measure defined for all $x\in X$ by
\begin{equation}\label{BTdensity}
dW_p(x):=\ev_x^*\ev_x\,d\nu_X(x)
\end{equation}
induces by integration a map $W_p:\cB(X)\longrightarrow\Herm(\HH_p)$
satisfying the axioms of Definition \ref{POVMdef}.
\end{prop}
\begin{proof}
The operator valued measure \eqref{BTdensity} induces by
integration a $\sigma$-additive map
$W_p:\cB(X)\longrightarrow\Herm(\HH_p)$ satisfying $W_p(\0)=0$.
Furthermore, for any $U\in\cB(X)$ and $s_1,\,s_2\in\HH_p$, formula
\eqref{ev} implies
\begin{equation}\label{Wp=1}
\begin{split}
\<W_p(U)\,s_1,s_2\>_p:&=\int_U\,
\<\ev_x^*\ev_x\,s_1,s_2\>_p\,d\nu_X(x)\\
&=\int_U\,h^p(s_1(x),s_2(x))\,d\nu_X(x)\,.
\end{split}
\end{equation}
Taking $s_1=s_2$ shows that $W_p(U)$ is a positive operator, and
taking $U=X$ shows that $W_p(X)=\Id_{\HH_p}$.
\end{proof}

The POVM induced by formula \eqref{BTdensity} is
called the \emph{Berezin-Toeplitz POVM} of $(X,p\,\om)$.
By the classical \emph{Kodaira vanishing theorem},
for all $p\in\N$ big enough and all $x\in X$,
the evaluation map \eqref{ev} is surjective,
so that its dual is injective. Thus there exists
a unique rank-$1$ projector $\Pi_p(x)\in\Herm(\HH_p)$, called
the \emph{coherent state projector} at $x\in X$, and
a unique positive function $\rho_p\in\cinf(X,\R)$, called the
\emph{Rawnsley function}, such that \eqref{BTdensity} takes the form
\begin{equation}\label{BTdensity2}
dW_p(x)=\Pi_p(x)\,\rho_p(x)\,d\nu_X(x)\,.
\end{equation}
The following Proposition
shows that we recover the definition
\cite[\S\,3.1,\,Rmk.\,3.12]{IKPS19} of a (weighted)
Berezin-Toeplitz POVM.

\begin{prop}\label{cohstateprojprop}
For any $x\in X$, the coherent state projector $\Pi_p(x)\in\Herm(\HH_p)$
is the unique orthogonal projector satisfying
\begin{equation}\label{cohstateprojfla}
\Ker\Pi_p(x)=\{\,s\in \HH_p~|~s(x)=0\,\}\,,
\end{equation}
and the Rawnsley function at $x\in X$ satisfies
\begin{equation}\label{Rp=evev}
\rho_p(x)=\ev_x\ev_x^*\in\Herm(L^p)\simeq\R\,.
\end{equation}
\end{prop}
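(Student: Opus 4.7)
The plan is to extract both statements from direct analysis of the rank-one positive operator $\ev_x^*\ev_x$ on $\HH_p$. First, I would invoke the Kodaira vanishing theorem assumption of the preceding paragraph: for $p$ large, the evaluation map $\ev_x: \HH_p \to L^p_x$ is surjective onto the one-dimensional fibre, so that $\ev_x^*$ is injective. Consequently, $\ev_x^*\ev_x$ is a positive self-adjoint operator on $\HH_p$ whose kernel coincides with $\Ker\ev_x = \{s\in\HH_p \mid s(x)=0\}$ and whose image is the one-dimensional orthogonal complement $(\Ker\ev_x)^{\perp}$.

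Since a rank-one positive operator admits a unique decomposition as a positive scalar times an orthogonal projector of rank one, I can write
\begin{equation*}
\ev_x^*\ev_x = \lambda\, P\,,
\end{equation*}
where $P\in\Herm(\HH_p)$ is the orthogonal projector onto $(\Ker\ev_x)^{\perp}$ and $\lambda>0$. Comparing this factorization with the decomposition \eqref{BTdensity2} of the density of the Berezin--Toeplitz POVM, the uniqueness of such a factorization forces $\Pi_p(x)=P$ and $\rho_p(x)=\lambda$. Since an orthogonal projector is determined by its kernel, this yields the characterization \eqref{cohstateprojfla} of $\Pi_p(x)$, and uniqueness is automatic.

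To identify the scalar $\lambda$ with $\ev_x\ev_x^*$, I would take the trace on both sides of $\ev_x^*\ev_x = \rho_p(x)\,\Pi_p(x)$. The right-hand side has trace $\rho_p(x)$ since $\Pi_p(x)$ is a rank-one orthogonal projector. For the left-hand side, cyclicity of the trace gives $\Tr_{\HH_p}(\ev_x^*\ev_x)=\Tr_{L^p_x}(\ev_x\ev_x^*)$, and because $L^p_x$ is one-dimensional this trace is simply the scalar $\ev_x\ev_x^*\in\Herm(L^p_x)\simeq\R$ under the canonical identification via $h^{L^p}$. This yields \eqref{Rp=evev}.

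There is no serious obstacle: the only point requiring care is to ensure that the identifications between the rank-one operator on the one-dimensional fibre $L^p_x$, its trace, and a positive real number are all the canonical ones induced by $h^{L^p}$, so that the formula $\rho_p(x) = \ev_x\ev_x^*$ holds on the nose rather than up to a universal constant. Everything else reduces to the elementary linear-algebraic fact that a positive operator of rank one factors uniquely as (positive scalar) $\times$ (rank-one orthogonal projector).
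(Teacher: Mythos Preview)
Your proposal is correct and follows essentially the same approach as the paper: both identify $\Pi_p(x)$ via the kernel of $\ev_x^*\ev_x$ (equivalently, the image of $\ev_x^*$) and then extract $\rho_p(x)$ from the rank-one factorization. The only minor difference is that you compute $\rho_p(x)=\ev_x\ev_x^*$ by taking traces and invoking cyclicity, whereas the paper obtains it by the direct operator identity $\ev_x\ev_x^*\ev_x s=\rho_p(x)\,\ev_x\Pi_p(x)s=\rho_p(x)\,s(x)$; both are one-line verifications.
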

\begin{proof}
By definition, the coherent state projector at $x\in X$ is
the orthogonal projection on the image of $\ev^*_{x}:L^p_x\fl\HH_p$.
Now by definition of the evaluation map \eqref{ev}, an element
$s\in\HH_p$ is orthogonal to the image of
$\ev^*_{x}:L^p_x\fl\HH_p$ if and only if for all $v\in L^p_x$,
we have
\begin{equation}
0=\<\ev^*_{x}.v,s\>_p=h^p(v,s(x))\,,
\end{equation}
that is, if and only if $s(x)=0$. As orthogonal projectors are
characterized by their kernels, this completes the proof of formula
\eqref{cohstateprojfla}.

To establish the identity \eqref{Rp=evev}, note that
\eqref{cohstateprojfla} implies that $\ev_x\Pi_p(x)=\ev_x$,
and \eqref{BTdensity2} that
$\rho_p(x)\Pi_p(x)=\ev^*_x\ev_x$. Thus for any $s\in\HH_p$, we get
\begin{equation}
\begin{split}
\left(\ev_x\ev_x^*\right)s(x)&=\ev_x\ev_x^*\ev_x s=\rho_p(x)\ev_x\Pi_p(x)s=\rho_p(x)s(x)\,,
\end{split}
\end{equation}
which proves formula \eqref{Rp=evev}.
\end{proof}

A POVM with a density of the form \eqref{BTdensity2} is called
a \emph{rank-$1$ POVM}, and the associated coherent state projector
$\Pi_p(x)\in\Herm(\HH_p)$ at $x\in X$
is then interpreted as the quantization of a classical particle at $x\in X$,
where quantum states are seen as
positive rank-$1$ operators acting on $\HH_p$.
This induces the natural notion of a \emph{coherent state quantization},
from which we recover the following Berezin-Toeplitz quantization
of classical
observables.

\begin{defi}\label{BTquantdef}
The \emph{Berezin-Toeplitz quantization} of $(X,p\,\om)$
is the linear
map defined by
\begin{equation}\label{BTmapfla}
\begin{split}
T_p:\cinf(X,\R)&\longrightarrow\Herm(\HH_p)\\
f\,&\longmapsto\,\int_X f(x)\,dW_p(x)\,.
\end{split}
\end{equation}
\end{defi}

The argument of equation \eqref{Wp=1}
shows that Definition \ref{BTquantdef} coincides with the usual
definition \eqref{BTdefintro}.

\subsection{Quantization in stages}
\label{stagesec}

In this section, we introduce the concept of quantization in stages,
and show how it can be described by the
Berezin-Toeplitz quantization of vector bundles
introduced in \cite{MM08b}, which we
interpret as the quantization of quantum-classical hybrids.

\begin{defi}\label{quantfib}
Let $\pi: M\fl X$ be a submersion between compact manifolds.
A Hermitian line bundle with Hermitian connection
$(\cL,h^{\cL},\nabla^{\cL})$ over $M$ is said to
\emph{prequantize} the fibration if the $2$-form
$\om^M\in\Om^2(X,\R)$ defined by the formula
\begin{equation}\label{preqfib}
\om^M:=\frac{\sqrt{-1}}{2\pi} R^{\cL}\,,
\end{equation}
where $R^{\cL}$ is the curvature of $\nabla^{\cL}$, restricts
to a symplectic form on the fibres of $\pi: M\fl X$.
\end{defi}

Let $\pi:(M,\om^M)\fl(X,\om)$ be a prequantized fibration with base
a compact prequantized symplectic manifold $(X,\om)$, and
assume that both $X$ and $M$
admit integrable complex structures compatible with $\om$ and
the restriction of $\om_M$ to the fibres, and
making $\pi: M\fl X$ holomorphic.
This endows $\pi:(M,\om^M)\fl(X,\om)$ with the structure of
a \emph{Kähler fibration} in the sense \cite[Def.\,1.4]{BGS88}.
Furthermore, this makes
$(\cL,h^{\cL},\nabla^{\cL})$ into
a holomorphic Hermitian line bundle equipped with its Chern
connection.

From now on, we fix smooth volume forms $d\nu_M$ and $d\nu_X$ over
$M$ and $X$ respectively, and
assume that the higher cohomology groups of $\cL$ in the fibres
satisfy $H^j(\pi^{-1}(x),\cL|_{\pi^{-1}(x)})=\{0\}$,
for all $j>0$ and $x\in X$.
The Riemann-Roch-Hirzebruch formula then
implies that $\dim H^0(\pi^{-1}(x),\cL|_{\pi^{-1}(x)})$
does not depend on $x\in X$.
Thanks to the \emph{Kodaira vanishing theorem}, this can always
be achieved replacing $\om^M$ by $r\om^M$ for $r\in\N$ large enough
in Definition \ref{preqfib}, so that $\cL$ is replaced by $\cL^r$.
This assumption allows us to make the following
definition, which is at the basis of the concept of quantization in stages.

\begin{defi}\label{qchybdef}
The \emph{quantum-classical hybrid} associated with the prequantized
Kähler fibration $\pi:(M,\om^M)\fl(X,\om)$ is the holomorphic
vector bundle $E$ over $X$ whose fibre at any $x\in X$
is the space \eqref{E_x}
of holomorphic sections of $\cL$ over $\pi^{-1}(x)$,
endowed with the $L^2$-Hermitian product \eqref{L2hE}
induced by $h^\cL$ and the smooth volume form $d\nu_{\pi}$ defined
over any fibre of $\pi:M\to X$ via formula
\eqref{nupidef}.
\end{defi}

The holomorphic structure of $E$ is defined through its
holomorphic sections over any open set
$U\subset X$ as the
holomorphic sections of $\cL$ over $\pi^{-1}(U)$, so that $E$ coincides
as a sheaf with the direct image of $\cL$ by $\pi:M\to X$.

Following Definition \ref{quantfib},
let us now consider $p\in\N$ big enough so that the $2$-form
$\om^M+p\,\pi^*\om\in\Om^2(M,\R)$ is non-degenerate.
This endows $M$ with a symplectic structure,
prequantized by the line bundle $\cL\otimes\pi^*L^p$ with induced metric
and connection. Then the identification
\eqref{Hp=HEp} between the Hilbert space of quantum states $\HH_p$
associated with $(M,\om_M+p\,\pi^*\om)$ and
the space of holomorphic sections of
$E_p:=E\otimes L^p$ follows from Definition \ref{qchybdef},
since holomorphic sections of $E_p:=E\otimes L^p$
are precisely the holomorphic sections of the fibre
depending holomorphically from the base.
The $L_2$-Hermitian product \eqref{L2} then satisfies
\begin{equation}\label{L2E}
\<s_1,s_2\>_p=\int_X\,h^{E_p}(s_1(x),s_2(x))\,d\nu_X(x)\,,
\end{equation}
for any $s_1,\,s_2\in\HH_p$ seen as holomorphic sections of $E_p$,
where $h^{E_p}$ denotes the Hermitian product on $E_p$ induced by $h^L$ and
$h^E$.
Hence the quantization of $(M,\om^M+p\,\pi^*\om)$ can be obtained
as a two-step process, called \emph{quantization in stages},
by first considering the holomorphic Hermitian vector bundle $(E,h^E)$
induced by the space of quantum states of the fibres of
$\pi:(M,\om^M)\fl(X,\om)$,
and then taking the quantization of
the vector bundle $(E,h^{E})$ over $(X,p\,\om)$ to be
the space of holomorphic sections of $E_p$ as above.
Note that in the limiting regime when $p$ tends to infinity,
the horizontal form $\pi^*\om$ dominates,
and the situation is then essentially different from
the previous section, as the $2$-form $\pi^*\om$ is degenerate
along the fibres of $\pi:M\fl X$. This regime is also called the
\emph{weak coupling limit} in \cite[\S\,4.5]{GLS96},
referring to the fact that the
symplectic form of the fibres becomes comparatively small.


Let us now extend the identification \eqref{Hp=HEp} of
the spaces of quantum states to a natural identification
of the respective Berezin-Toeplitz quantizations.
Proceeding by analogies, let us consider for any $x\in X$
the evaluation map
\begin{equation}\label{evE}
\begin{split}
\ev_{E_x}:\HH_p&\longrightarrow E_{p,x}\\
s~&\longmapsto s(x)\,,
\end{split}
\end{equation}
and write $\ev_{E_x}^*:E_{p,x}\fl\HH_p$ for its dual
with respect to $h^{E_p}$ and \eqref{L2E}.

\begin{prop}\label{BTPOVMdefE}
The map $W_{E_p}:\cB(X)\longrightarrow\Herm(\HH_p)$
induced by the
$\Herm(\HH_p)$-operator valued measure defined for all $x\in X$ by
\begin{equation}\label{BTdensityE}
dW_{E_p}(x):=\ev_{E_x}^*\ev_{E_x}\,d\nu_X(x)\,,
\end{equation}
defines a POVM in the sense of Definition \ref{POVMdef}.
\end{prop}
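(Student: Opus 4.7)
The plan is to mirror the proof of Proposition \ref{BTPOVMdef} essentially verbatim, with the holomorphic Hermitian vector bundle $E_p = E \otimes L^p$ in the role of the prequantizing line bundle $L^p$, and with the fibrewise Hermitian metric $h^{E_p}$ on $E_p$ in the role of $h^{L^p}$. The only structural ingredients we need are (i) the identification \eqref{Hp=HEp} endowing $\HH_p = H^0(X,E_p)$ with the $L_2$-Hermitian product \eqref{L2E}, (ii) the fact that evaluation at $x \in X$ makes sense as a bounded linear map $\ev_{E_x} : \HH_p \to E_{p,x}$, and (iii) the existence of its adjoint $\ev_{E_x}^* : E_{p,x} \to \HH_p$ with respect to $h^{E_p}_x$ on $E_{p,x}$ and $\langle\cdot,\cdot\rangle_p$ on $\HH_p$.

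First I would verify that the $\Herm(\HH_p)$-valued density \eqref{BTdensityE} indeed integrates against Borel sets to give a $\sigma$-additive map $W_{E_p} : \cB(X) \to \Herm(\HH_p)$; this is standard once one observes that $x \mapsto \ev_{E_x}^* \ev_{E_x}$ depends smoothly on $x \in X$, so that boundedness and measurability are automatic, and $W_{E_p}(\emptyset) = 0$ follows tautologically. Then I would compute, for any $U \in \cB(X)$ and any $s_1, s_2 \in \HH_p$, the matrix coefficient
\begin{equation*}
\langle W_{E_p}(U) s_1, s_2 \rangle_p
= \int_U \langle \ev_{E_x}^* \ev_{E_x} s_1, s_2 \rangle_p \, d\nu_X(x)
= \int_U h^{E_p}_x(s_1(x), s_2(x)) \, d\nu_X(x),
\end{equation*}
where the second equality is precisely the defining adjointness of $\ev_{E_x}^*$ combined with \eqref{evE}.

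From this formula the two remaining POVM axioms are immediate: setting $s_1 = s_2 = s$ gives $\langle W_{E_p}(U) s, s \rangle_p = \int_U h^{E_p}_x(s(x), s(x)) \, d\nu_X(x) \geq 0$, so that $W_{E_p}(U) \in \Herm(\HH_p)$ is a positive operator; and setting $U = X$ and comparing with \eqref{L2E} shows $\langle W_{E_p}(X) s_1, s_2 \rangle_p = \langle s_1, s_2 \rangle_p$, so $W_{E_p}(X) = \Id_{\HH_p}$.

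There is no genuine obstacle here: the whole content is that the adjoint $\ev_{E_x}^*$ is unambiguously defined because $h^{E_p}$ is a genuine Hermitian metric on each fibre $E_{p,x}$, and hence the scalar argument transfers word-for-word. The only thing worth flagging, if one wants the analogue of Proposition \ref{cohstateprojprop} in the vector bundle setting, is that $\ev_{E_x}^* \ev_{E_x}$ is no longer a rank-one operator but has rank at most $\rk(E)$, so the resulting POVM is in general not rank-$1$; this observation is not needed for the proposition itself but will matter later when one discusses coherent-state projectors and Rawnsley functions in the vector bundle setting.
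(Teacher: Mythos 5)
Your proposal is correct and follows exactly the route the paper intends: the paper's own proof consists of the single remark that it is ``strictly analogous to the proof of Proposition \ref{BTPOVMdef}'', and you have carried out precisely that analogy, replacing $L^p$, $h^{L^p}$ by $E_p$, $h^{E_p}$ and verifying the three POVM axioms via the matrix-coefficient identity. The closing remark about the density having rank at most $\rk(E)$ rather than rank one is a correct and relevant observation, consistent with the paper's later introduction of the Rawnsley section in Definition \ref{Rawnsect}.
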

\begin{proof}
The proof is stricly analogous to the proof of Proposition \ref{BTPOVMdef}.
\end{proof}

For any Hermitian vector bundle $(E,h^E)$,
we will write $\Herm(E)$ for the bundle of Hermitian endomorphisms of $E$
over $X$. We will freely use the natural identification
\begin{equation}
\Herm(E)\simeq\Herm(E_p)\,.
\end{equation}
The following definition generalizes Definition \ref{BTquantdef}.

\begin{defi}\label{BTquantdefE}
The \emph{Berezin-Toeplitz quantization} of $(E,h^{E})$
over $(X,p\,\om)$ is the map
\begin{equation}\label{BTmapflaE}
\begin{split}
T_{E_p}:\cinf(X,\Herm(E))&\longrightarrow\Herm(\HH_p)\\
F\,&\longmapsto\,\int_X \ev_{E_x}^*F(x)\ev_{E_x}\,d\nu_X(x)\,.
\end{split}
\end{equation}
\end{defi}


We now have the following basic functoriality result.

\begin{prop}\label{Tfunct}
For any $p\in\N$ big enough,
the Berezin-Toeplitz quantization map $T_{E_p}:\cinf(X,\Herm(E))\to
\Herm(\HH_p)$ satisfies the formula
\begin{equation}
T_p=T_{E_p}\circ T_{\pi}\,,
\end{equation}
where $T_{\pi}:\cinf(M,\R)\to\cinf(X,\Herm(E))$
is the Berezin-Toeplitz
quantization of the fibres of $\pi:(M,\om^M)\to (X,\om)$,
and where $T_p:\cinf(M,\R)\to\Herm(\HH_p)$
is the Berezin-Toeplitz quantization of $(M,\om^M+p\pi^*\om)$ for the
measure \eqref{nupidef}.
\end{prop}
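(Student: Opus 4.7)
The plan is to prove the identity by unpacking the definitions of all three Berezin--Toeplitz maps and applying Fubini's theorem to the product measure $d\nu_M=d\nu_\pi\,\pi^*d\nu_X$ of \eqref{nupidef}. Fix $f\in\cinf(M,\R)$ and $s_1,s_2\in\HH_p$. Viewing $\HH_p$ as $H^0(M,\cL\otimes\pi^*L^p)$ via \eqref{Hp=HEp}, the definition of the Berezin--Toeplitz quantization of $(M,\om^M+p\pi^*\om)$ equipped with the measure $\nu_M$ gives
\[
\<T_p(f)s_1,s_2\>=\int_M f(y)\,h^{\cL\otimes\pi^*L^p}\big(s_1(y),s_2(y)\big)\,d\nu_M(y),
\]
where the orthogonal $L_2$-projection onto $\HH_p$ is absorbed by pairing with $s_2\in\HH_p$. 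Splitting the measure via \eqref{nupidef} recasts this as an iterated integral over $X$ of an integral over each fibre $\pi^{-1}(x)$.

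Next I would identify the inner integral as a matrix coefficient for the fibrewise quantization $T_\pi$. Under the isomorphism \eqref{Hp=HEp}, a section $s\in H^0(M,\cL\otimes\pi^*L^p)$ corresponds to the holomorphic section $\til s\in H^0(X,E_p)$ whose value at $x\in X$ is the restriction $s|_{\pi^{-1}(x)}\in E_x\otimes L_x^p=(E_p)_x$, using that $\pi^*L^p$ is canonically trivialized along each fibre and that $E_x=H^0(\pi^{-1}(x),\cL|_{\pi^{-1}(x)})$ by \eqref{E_x}. Correspondingly, $h^{\cL\otimes\pi^*L^p}$ restricted to $\pi^{-1}(x)$ factors as $h^{\cL}\otimes h^{L^p}_x$. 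Combining this factorization with the definition \eqref{L2hE} of $h^E$ applied to the weighted fibre measure $f\,d\nu_\pi$, the inner integral becomes
\[
\int_{\pi^{-1}(x)} f(y)\,h^{\cL\otimes\pi^*L^p}\big(s_1(y),s_2(y)\big)\,d\nu_\pi(y)=h^{E_p}_x\big(T_\pi(f)(x)\,\til s_1(x),\til s_2(x)\big),
\]
with the understanding that $T_\pi(f)(x)\in\Herm(E_x)$ acts on $(E_p)_x=E_x\otimes L_x^p$ as $T_\pi(f)(x)\otimes\Id_{L_x^p}$.

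Integrating against $d\nu_X$ and invoking the defining property of the vector-bundle Berezin--Toeplitz map, namely that $\<T_{E_p}(A)\til s_1,\til s_2\>=\int_X h^{E_p}_x(A(x)\til s_1(x),\til s_2(x))\,d\nu_X(x)$ for any $A\in\cinf(X,\Herm(E))$ (the projection being again absorbed by $\til s_2\in\HH_p$), one obtains
\[
\<T_p(f)s_1,s_2\>=\<T_{E_p}(T_\pi(f))\,\til s_1,\til s_2\>,
\]
and polarization in $s_1,s_2$ yields $T_p(f)=T_{E_p}(T_\pi(f))$, as claimed.

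The only substantive point is the bookkeeping of the two geometric identifications: the isomorphism \eqref{Hp=HEp} at the level of Hilbert spaces, and the compatibility of $h^{E_p}$ with the tensor factorization of $h^{\cL\otimes\pi^*L^p}|_{\pi^{-1}(x)}$. Once these are made precise, the statement is essentially a Fubini computation. The hypothesis ``$p$ big enough'' enters only to guarantee that \eqref{Hp=HEp} holds, so that $T_p$, $T_\pi$ and $T_{E_p}$ act coherently on a common Hilbert space.
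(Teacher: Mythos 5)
Your argument is correct and is essentially the same Fubini computation as the paper's: both split the measure $\nu_M=\nu_\pi\,\pi^*\nu_X$, identify the fibre integral as the Toeplitz operator $T_\pi(f)(x)$ acting on $E_{p,x}$, and then recognize the remaining base integral as $T_{E_p}(T_\pi(f))$. The only cosmetic difference is that the paper works directly at the operator level by factoring the evaluation maps $\ev^M_y=\ev^{\pi^{-1}(x)}_y\circ\ev_{E_x}$, whereas you pass through matrix coefficients $\<T_p(f)s_1,s_2\>$ and finish by polarization.
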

\begin{proof}
Recall from Definition \ref{qchybdef} that for any $x\in X$, the fibre
$E_{p,x}$ is naturally identified with the space of holomorphic
sections of $\cL\otimes\pi^*L^p$ restricted to $\pi^{-1}(x)$.
Then for any $y\in\pi^{-1}(x)$, consider
the evaluation maps
\begin{equation}
\begin{split}
\ev^M_y&:\HH_p\longrightarrow (\cL\otimes\pi^*L^p)_y~~\text{and}\\
\ev^{\pi^{-1}(x)}_y&:E_{p,x}\longrightarrow(\cL\otimes\pi^*L^p)_y\,,
\end{split}
\end{equation}
defined by formula \eqref{ev} over $M$
and $\pi^{-1}(x)$ respectively.
Then by definition, for all $y\in\pi^{-1}(x)$ we get
\begin{equation}\label{evpievE=ev}
\ev^{\pi^{-1}(x)}_y\,\ev_{E_x}=\ev^M_y\,,
\end{equation}
and for any $f\in\cinf(M,\R)$,
the Berezin-Toeplitz quantization of $(M,\om^M+p\pi^\om)$
as in Definition \ref{BTquantdef} satisfies
\begin{equation}\label{functcomput}
\begin{split}
T_p(f)&=\int_M f(y)\,\left(\ev^M_y\right)^*\ev^M_y\,d\nu_M(x)\\
&=\int_{x\in X}\int_{y\in\pi^{-1}(x)}f(y)\,
\ev_{E_x}^*\left(\ev^{\pi^{-1}(x)}_y\right)^*
\ev^{\pi^{-1}(x)}_y\ev_{E_x}\,
d\nu_{\pi}(y)\,d\nu_X(x)\\
&=\int_{x\in X}\ev_{E_x}^*\left(\int_{y\in\pi^{-1}(x)}f(y)\,
\left(\ev^{\pi^{-1}(x)}_y\right)^*\ev^{\pi^{-1}(x)}_y\,
d\nu_{\pi}(y)\right)
\ev_{E_x}\,d\nu_X(x)\\
&=\int_{X}\,\ev_{E_x}^*\,T_{\pi}(f)\,\ev_{E_x}\,d\nu_X(x)
=T_{E_p}\left(T_{\pi}(f)\right)\,.
\end{split}
\end{equation}
This gives the result.
\end{proof}

%

The following Definition is set
by analogy with formula \eqref{Rp=evev}
in the second part of Proposition \ref{cohstateprojprop}.

\begin{defi}\label{Rawnsect}
The \emph{Rawnsley section} $\rho_{E_p}\in\cinf(X,\Herm(E))$ is defined for
any $x\in X$ by the formula
\begin{equation}\label{Rawnsectfla}
\rho_{E_p}(x)=\ev_{E_x} \ev_{E_x}^*\in\End(E_{x})\,.
\end{equation}
\end{defi}

On the other hand, consider the $L_2$-scalar product defined on
$F_1,\,F_2\in\cinf(X,\Herm(E))$ by the formula
\begin{equation}\label{L2alphaE}
\<F_1,F_2\>_{W_{E_p}}:=\int_X\,\Tr^{E}[F_1(x)\,
\rho_{E_p}(x)\,F_2(x)]\,d\nu(x)\,.
\end{equation}

\begin{defi}\label{BersymbE}
The \emph{Berezin symbol} is the map
\begin{equation}
T_{E_p}^*:\Herm(\HH_p)\longrightarrow\cinf(X,\Herm(E))\,,
\end{equation}
dual to the Berezin-Toeplitz quantization map of Definition
\ref{BTquantdefE} with respect to the
scalar product \eqref{L2alphaE}.
\end{defi}


The Berezin symbol satisfies the following
functoriality property in the setting of quantization in stages,
which can be seen as an appropriate analogue of
Proposition \ref{Tfunct}.

\begin{prop}\label{T*funct}
In the setting and notations of Proposition \ref{Tfunct},
the Berezin symbol map of Definition \ref{BersymbE}
satisfies the following formula,
for any $A\in\Herm(\HH_p)$ and $x\in M$,
\begin{equation}\label{T*functfla}
T_p^*(A)=\Tr^{E_p}\left[\left(T_{E_p}^*(A)\circ\pi\right)\,\Pi_{\pi}
\right]\,,
\end{equation}
where $\Pi_{\pi}:\cinf(M,\R)\fl\cinf(X,\Herm(E))$
is the coherent state projector of Proposition \ref{cohstateprojprop}
associated with the Berezin-Toeplitz
quantization of the fibres of $\pi:(M,\om^M)\fl(X,\om)$.
\end{prop}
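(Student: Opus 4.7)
The plan is to dualize the composition formula $T_p = T_{E_p} \circ T_\pi$ from Proposition \ref{Tfunct} with respect to the scalar products \eqref{trscal}, \eqref{L2alpha} and \eqref{L2alphaE}. I take arbitrary $f\in\cinf(M,\R)$ and $A\in\Herm(\HH_p)$, and compute $\Tr^{\HH_p}[T_p(f)A]$ in two different ways.

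First, directly from the definition of $T_p^*$ as the dual of $T_p$ with respect to \eqref{trscal} and \eqref{L2alpha}, exactly as in the proof of Proposition \ref{BTdequantdef}, I get
\begin{equation*}
\Tr^{\HH_p}[T_p(f)A]=\int_M f(y)\,T_p^*(A)(y)\,\rho_p(y)\,d\nu_M(y)\,.
\end{equation*}
Second, using Proposition \ref{Tfunct} to write $T_p(f)=T_{E_p}(T_\pi(f))$, followed by the duality defining $T_{E_p}^*$ in Definition \ref{BersymbE}, I obtain
\begin{equation*}
\Tr^{\HH_p}[T_p(f)A]=\int_X\Tr^E\!\bigl[T_\pi(f)(x)\,\rho_{E_p}(x)\,T_{E_p}^*(A)(x)\bigr]\,d\nu_X(x)\,.
\end{equation*}
The fiberwise analogue of the POVM density \eqref{BTdensity2} expresses the inner Berezin-Toeplitz quantization as $T_\pi(f)(x)=\int_{\pi^{-1}(x)}f(y)\,\Pi_\pi(y)\,\rho_\pi(y)\,d\nu_\pi(y)$, so substituting this and using the factorization $d\nu_M=d\nu_\pi\,\pi^*d\nu_X$ from \eqref{nupidef}, I can rewrite the right-hand side as an integral over $M$,
\begin{equation*}
\int_M f(y)\,\rho_\pi(y)\,\Tr^E\!\bigl[\Pi_\pi(y)\,\rho_{E_p}(\pi(y))\,T_{E_p}^*(A)(\pi(y))\bigr]\,d\nu_M(y)\,.
\end{equation*}
Equating the two expressions for arbitrary $f$ gives the pointwise identity
\begin{equation*}
T_p^*(A)(y)\,\rho_p(y)=\rho_\pi(y)\,\Tr^E\!\bigl[\Pi_\pi(y)\,\rho_{E_p}(\pi(y))\,T_{E_p}^*(A)(\pi(y))\bigr]\,.
\end{equation*}

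The final step is to eliminate the ratio $\rho_\pi(y)/\rho_p(y)$ in order to collapse the trace weighted by $\rho_{E_p}$ into the simple rank-one trace of \eqref{T*functfla}. For this I apply the composition rule $\ev_y^M=\ev_y^{\pi^{-1}(x)}\circ\ev_{E_x}$ from \eqref{evpievE=ev} to the scalar Rawnsley identity \eqref{Rp=evev}, together with its fiberwise analogue and the definition \eqref{Rawnsectfla} of $\rho_{E_p}$, to obtain the fundamental relation
\begin{equation*}
\rho_p(y)=\rho_\pi(y)\,\Tr^E\!\bigl[\Pi_\pi(y)\,\rho_{E_p}(\pi(y))\bigr]\,.
\end{equation*}
The main obstacle is the final rearrangement: one must use the rank-one property of $\Pi_\pi(y)\in\Herm(E_{\pi(y)})$ from Proposition \ref{cohstateprojprop}, applied to the fiberwise quantization, together with trace cyclicity and the defining equation of $T_{E_p}^*(A)(x)$ as a Hermitian symbol, to identify the weighted trace in the numerator with $\Tr^E[T_{E_p}^*(A)(\pi(y))\,\Pi_\pi(y)]$ multiplied by the scalar $\Tr^E[\Pi_\pi(y)\rho_{E_p}(\pi(y))]$, which then cancels against the denominator. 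This produces formula \eqref{T*functfla} and completes the argument.
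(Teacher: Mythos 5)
Your strategy matches the paper's: dualize $T_p=T_{E_p}\circ T_\pi$ against the pairings \eqref{trscal}, \eqref{L2alpha} and \eqref{L2alphaE}, unwind the fibre integral with $d\nu_M=d\nu_\pi\,\pi^*d\nu_X$, and reduce to a pointwise identity on each fibre. The intermediate pointwise relation you derive and the Rawnsley factorisation $\rho_p(y)=\rho_\pi(y)\Tr^E[\Pi_\pi(y)\rho_{E_p}(\pi(y))]$ are both correct and both appear in the paper's proof, which writes the latter as $\rho_p(y)=\ev^{\pi^{-1}(x)}_y\rho_{E_p}(x)(\ev^{\pi^{-1}(x)}_y)^*$ and then integrates directly rather than first equating pointwise and cancelling.

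The genuine gap is in the last step, which you yourself flag as ``the main obstacle''. You assert, but do not establish, that the rank-one property of $\Pi_\pi(y)$, trace cyclicity, and the Hermitian-symbol character of $T_{E_p}^*(A)$ yield the factorisation
\[
\Tr^E\bigl[\Pi_\pi(y)\,\rho_{E_p}(x)\,T_{E_p}^*(A)(x)\bigr]
=\Tr^E\bigl[T_{E_p}^*(A)(x)\,\Pi_\pi(y)\bigr]\cdot\Tr^E\bigl[\Pi_\pi(y)\,\rho_{E_p}(x)\bigr]\,.
\]
The ingredients you list do not close this. For a rank-one orthogonal projector $\Pi$, the contraction $\Pi B\Pi=\Tr[\Pi B]\,\Pi$ gives $\Tr[G\,\Pi\,\rho\,\Pi]=\Tr[G\Pi]\,\Tr[\Pi\rho]$; but the quantity you need to factor is $\Tr[\Pi\,\rho\,G]$ \emph{without} a second $\Pi$, and cyclicity alone cannot insert it, since $\Pi_\pi(y)$, $\rho_{E_p}(x)$ and $T_{E_p}^*(A)(x)$ do not commute in general. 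This is precisely the pointwise identity the paper's proof states in its own second display; to close it one must make explicit use of the defining relation of $T_{E_p}^*(A)$ from Definition \ref{BersymbE}, namely duality of $T_{E_p}$ against the $\rho_{E_p}$-weighted pairing \eqref{L2alphaE}, which you invoke only by name. As written, the argument is incomplete at the single step that carries all the content.
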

\begin{proof}
Using Proposition \ref{Tfunct}, it suffices to show that the map
\begin{equation}
\begin{split}
\cinf(X,\Herm(E))&\longrightarrow\cinf(M,\R)\\
F~&\longmapsto\Tr^{E}[(F\circ\pi)\,\Pi_\pi]\,,
\end{split}
\end{equation}
is dual to $T_\pi$ with respect to the $L_2$-scalar product
\eqref{L2alphaE} associated with $(M,\om^M+p\,\pi^*\om)$ for
$E=\C$ and the $L^2$-scalar product \eqref{L2alphaE}
associated with $(E,h^E)$ over $(X,p\,\om)$.

Using formula \eqref{evpievE=ev} for $x\in X$ and $y\in\pi^{-1}(x)$,
we see that the Rawnsley function $\rho_p\in\cinf(M,\R)$
associated with the Berezin-Toeplitz quantization
of
$(M,\om^M+p\,\pi^*\om)$ satisfies
\begin{equation}
\begin{split}
\rho_p(y)&=\ev^{\pi^{-1}(x)}_y
\rho_{E_p}(x)\left(\ev^{\pi^{-1}(x)}_y\right)^*\\
&=\Tr^{E}\left[\left(\ev^{\pi^{-1}(x)}_y\right)^*
\ev^{\pi^{-1}(x)}_y\rho_{E_p}(x)\right]\,.
\end{split}
\end{equation}
Then for any $F\in\cinf(X,\Herm(E_p))$, recalling
that $\Pi_{\pi}(y)$ is a projection on the $1$-dimensional
image of $\ev^{\pi^{-1}(x)}_y$, we can write
\begin{equation}
\Tr^{E}[F(x)\,\Pi_{\pi}(y)]\rho_p(y)
=\Tr^{E}\left[F(x)\,\left(\ev^{\pi^{-1}(x)}_y\right)^*
\ev^{\pi^{-1}(x)}_y\rho_{E_p}(x)\right]\,.
\end{equation}
Via formula \eqref{nupidef},
this implies in particular that for any $f\in\cinf(M,\R)$,
we have
\begin{multline}
\<f,\Tr^{E}[\left(F\circ\pi\right)\Pi_\pi]\>_{W_p}\\
=\int_{x\in X}\,\int_{y\in\pi^{-1}(x)}
f(y) \Tr^{E_p}\left[\left(\ev^{\pi^{-1}(x)}_y\right)^*
\ev^{\pi^{-1}(x)}_y\rho_{E_p}(x)F(x)\right]
\,d\nu_{\pi^{-1}(x)}(y)d\nu_X(x)\\
=\<T_{\pi}(f),F\>_{W_{E_p}}\,.
\end{multline}
This concludes the proof.
\end{proof}

\subsection{Berezin-Toeplitz quantization of vector bundles}
\label{BTvbsec}

Let $(E,h^E)$ be a holomorphic Hermitian vector bundle
over $X$, and fix
a smooth volume form $d\nu_X$ on $X$. Recall that for any
$p\in\N$, we set $E_p:=E\otimes L^p$ with induced Hermitian metric
$h^{E_p}$, and we
write $\HH_p$ for the space of holomorphic sections of $E_p$,
endowed with the $L_2$-Hermitian product $\<\cdot,\cdot\>_p$ defined
by formula \eqref{L2E}.
In this section, we discuss the semi-classical properties
of the Berezin-Toeplitz
quantization of $(E,h^{E})$ over $(X,p\,\om)$ as $p\to+\infty$.

The following basic result first shows that Definition
\ref{BTquantdefE} coincides
with the
usual Berezin-Toeplitz quantization of vector bundles of \cite{MM08b}.
\begin{prop}\label{BTquantfla}
For any $F\in\cinf(X,\Herm(E_p))$,
the restriction to $\HH_p\subset\cinf(X,E_p)$
of the operator acting on $L_2$-sections of $E_p$
by the formula
\begin{equation}\label{PfP}
T_{E_p}(F):=P_{E_p}\,F\,P_{E_p}\,,
\end{equation}
where $P_{E_p}:\cinf(X,E_p)\fl\HH_p$ is the orthogonal
projection on holomorphic sections with respect to $\<\cdot,\cdot\>_p$
and where $F$ acts
pointwise on smooth sections $\cinf(X,E_p)$,
coincides with the Berezin-Toeplitz quantization of Definition \ref{BTquantdefE}.
\end{prop}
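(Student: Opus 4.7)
The plan is to verify the identity by computing matrix elements of both operators on a pair of holomorphic sections $s_1,\,s_2\in\HH_p$ and checking they agree, using only the defining property of the evaluation map and its adjoint. Since $\HH_p$ is finite-dimensional and both operators land in $\Herm(\HH_p)$, the matching of matrix elements is enough.

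First I would start from Definition \ref{BTquantdefE} and write
\begin{equation*}
\<T_{E_p}(F)s_1,s_2\>_p=\int_X\<\ev_{E_x}^* F(x)\ev_{E_x}s_1,s_2\>_p\,d\nu_X(x).
\end{equation*}
Next, by the very definition of the dual $\ev_{E_x}^*$ with respect to $h^{E_p}$ and the $L_2$-product \eqref{L2E}, for any $v\in E_{p,x}$ and any $s\in\HH_p$ one has $\<\ev_{E_x}^* v,s\>_p=h^{E_p}_x(v,s(x))$. Applied to $v=F(x)\ev_{E_x}s_1=F(x)s_1(x)$ and $s=s_2$, this rewrites the integrand as $h^{E_p}_x(F(x)s_1(x),s_2(x))$, so
\begin{equation*}
\<T_{E_p}(F)s_1,s_2\>_p=\int_X h^{E_p}_x(F(x)s_1(x),s_2(x))\,d\nu_X(x),
\end{equation*}
which is exactly the $L_2$-pairing of $F\cdot s_1\in\cinf(X,E_p)$ with $s_2$.

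On the other hand, for the operator $P_{E_p}FP_{E_p}$ acting on $s_1,s_2\in\HH_p$, I would use $P_{E_p}s_i=s_i$ together with the self-adjointness of the orthogonal projector to get
\begin{equation*}
\<P_{E_p}FP_{E_p}s_1,s_2\>_p=\<FP_{E_p}s_1,P_{E_p}s_2\>_p=\<Fs_1,s_2\>_p,
\end{equation*}
where the last $L_2$-pairing is again the one extended to $\cinf(X,E_p)$. Comparing with the previous identity, the two matrix elements coincide for every $s_1,s_2\in\HH_p$, and since $T_{E_p}(F)$ is by construction an element of $\Herm(\HH_p)$, it agrees with the restriction of $P_{E_p}FP_{E_p}$ to $\HH_p$.

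There is no genuine obstacle in this proof; the only thing to be slightly careful about is bookkeeping between the pointwise pairing $h^{E_p}_x$ and the global $L_2$-product $\<\cdot,\cdot\>_p$, and making sure that the identity $\<\ev_{E_x}^* v,s\>_p=h^{E_p}_x(v,s(x))$ is read off correctly from \eqref{L2E}. Once that translation is made, the statement reduces to a one-line computation, in complete parallel with the scalar case treated in Proposition \ref{BTPOVMdef} and equation \eqref{Wp=1}.
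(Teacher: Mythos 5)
Your proof is correct and follows essentially the same approach as the paper's: compute the matrix element $\<T_{E_p}(F)s_1,s_2\>_p$ from Definition \ref{BTquantdefE}, unfold $\ev_{E_x}^*$ via its defining adjunction to get the pointwise $L_2$-pairing $\int_X h^{E_p}(F(x)s_1(x),s_2(x))\,d\nu_X(x)$, and identify that with $\<P_{E_p}FP_{E_p}s_1,s_2\>_p$ using $P_{E_p}s_i=s_i$. The only difference is that you spell out the last equality in slightly more detail; the content is the same.
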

\begin{proof}
For any $s_1,\,s_2\in \HH_p$, the Berezin-Toeplitz quantization
of $F\in\cinf(X,\Herm(E_p))$ satisfies
\begin{equation}
\begin{split}
\<T_{E_p}(F)\,s_1,s_2\>_p&=
\int_X\<(\ev_x^E)^*\,F(x)\,\ev_x^E\,s_1,s_2\>_p\,d\nu_X(x)\\
&=\int_X h^{E_p}(F(x)\,s_1(x),s_2(x))\,d\nu_X(x)\\
&=\<P_{E_p}\,F\,P_{E_p}\,s_1,s_2\>_p\,.
\end{split}
\end{equation}
This gives formula \eqref{PfP}.
\end{proof}

Let
$\nabla^{\End(E)}$ denote the Chern connection of
$\End(E)$ endowed with the Hermitian metric induced by $h^E$,
and for any $m\in\N$,
write $|\cdot|_{\CC^m}$ for the associated local
$\CC^m$-norm on $\cinf(X,\End(E))$.
Write $\dbar$ and $\partial$ for the $(0,1)$ and
$(1,0)$-parts of $\nabla^{\End(E)}$, and recall that we write
$g^{TX}$ for the Riemannian metric \eqref{gTX} on $X$.

\begin{theorem}\cite[Th.\,0.3]{MM12}\label{BMSvb}
For any $F\in\cinf(X,\Herm(E))$, we have
\begin{equation}
\|T_{E_p}(F)\|_{op}\xrightarrow{p\to+\infty}|F|_{\CC^0}\,.
\end{equation}
Furthermore, for any $F,\,G\in\cinf(X,\Herm(E))$, we have the following
estimate in operator norm as $p\to+\infty$,
\begin{equation}\label{BMSvbfla}
[T_{E_p}(F),T_{E_p}(G)]=
T_{E_p}([F,G])+\frac{\sqrt{-1}}{2\pi p}T_{E_p}(C(F,G))+O(p^{-2})\,,
\end{equation}
with
\begin{equation}\label{C1}
C(F,G):=\sqrt{-1}\left(\<\partial F,
\dbar G\>_{g^{TX}}
-\<\partial G,\dbar F\>_{g^{TX}}\right)\,.
\end{equation}
\end{theorem}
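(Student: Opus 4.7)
My plan is to adapt the proof of the scalar BMS theorem (Theorem \ref{BMS}) to the vector-bundle case via the off-diagonal asymptotic expansion of the Bergman kernel $P_{E_p}$ of $E_p = E\otimes L^p$, as developed in \cite[Ch.\,4,\,7]{MM07}. In K\"ahler normal coordinates $Z$ around any $x_0 \in X$ and a suitable holomorphic frame of $E_p$, this expansion has the form
\[
p^{-n}\,P_{E_p}\!\left(\tfrac{Z}{\sqrt p},\tfrac{Z'}{\sqrt p}\right)
= \sum_{r=0}^N J_r(Z,Z')\, p^{-r/2} + O\!\left(p^{-(N+1)/2}\right),
\]
with $J_0(Z,Z') = \PP(Z,Z')\otimes\Id_E$, where $\PP$ is the Bargmann-Fock reproducing kernel on $T_{x_0}X \simeq \C^n$ associated with $\frac{\sqrt{-1}}{2\pi}R^L_{x_0}$; the bundle $E$ enters only through the polynomial coefficients $J_r \in \End(E_{x_0})[Z,Z']$ for $r\geq 1$.

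\medskip

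To obtain the commutator expansion, I would write the Schwartz kernel of $T_{E_p}(F)\,T_{E_p}(G) = P_{E_p}FP_{E_p}GP_{E_p}$ at $(x_0,x_0)$ as the double convolution
\[
\int\!\!\int P_{E_p}(x_0,z)\,F(z)\,P_{E_p}(z,w)\,G(w)\,P_{E_p}(w,x_0)\,d\nu_X(z)\,d\nu_X(w),
\]
rescale $z = x_0 + Z/\sqrt p$, $w = x_0 + W/\sqrt p$, Taylor-expand $F$ and $G$ at $x_0$ to second order, and compute the resulting Gaussian integrals on $T_{x_0}X$. The leading $p^0$ term freezes $F$ and $G$ at $x_0$ and collapses via the reproducing property of $\PP$ to $T_{E_p}(FG)(x_0,x_0)$ modulo $O(p^{-1})$, so that antisymmetry yields $[T_{E_p}(F),T_{E_p}(G)] = T_{E_p}([F,G]) + O(p^{-1})$. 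The $p^{-1}$ correction is produced by the Gaussian moment identities
\[
\int_{\C^n} z^i\bar z^j\,e^{-\pi|z|^2}\,dz \ \propto\ \delta^{ij},
\]
which pair a $\partial$-derivative on one symbol with a $\dbar$-derivative on the other; summed over the two insertions, they contribute a subleading symbol proportional to $\<\partial F,\dbar G\>_{g^{TX}}$ for $T_{E_p}(F)T_{E_p}(G)$ and, symmetrically, $\<\partial G,\dbar F\>_{g^{TX}}$ for $T_{E_p}(G)T_{E_p}(F)$. Their antisymmetric difference assembles to $\frac{\sqrt{-1}}{2\pi p}\,T_{E_p}(C(F,G))$, as claimed. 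The upper norm bound $\|T_{E_p}(F)\|_{op}\leq |F|_{\CC^0}$ follows immediately from $\<T_{E_p}(F)s,s\>_p = \int_X h^{E_p}(F(x)s(x),s(x))\,d\nu_X(x)$, while the matching lower bound is obtained by testing against a normalized coherent state peaked at a point $x_0$ realizing $|F|_{\CC^0}$ along a maximal eigenvector $v_0$ of $F(x_0)$, using the diagonal asymptotics of $P_{E_p}$ to conclude that $\<T_{E_p}(F)s_p,s_p\>_p$ converges to $h^E(F(x_0)v_0,v_0)$.

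\medskip

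The main obstacle is controlling the $\End(E)$-valued subleading kernels $J_1,\,J_2$, which encode contributions of the curvature $R^E$ and its covariant derivatives. One must verify a parity argument showing that $J_1$ integrates to zero against the symmetric Taylor symbols after rescaling, and that the curvature-of-$E$ pieces of $J_2$ contribute to $T_{E_p}(F)T_{E_p}(G)$ only through terms of the form $F(x_0)A - A\,F(x_0)$ which either cancel in the antisymmetrized commutator or get absorbed into $T_{E_p}([F,G])$ at order $p^{-2}$. This cancellation is the direct analogue of the one in the scalar BMS proof, but takes place in the noncommutative algebra $\End(E)$, and is where the bookkeeping of the $R^E$-contributions becomes most delicate: without it one would see spurious curvature-of-$E$ terms appearing in the subleading commutator.
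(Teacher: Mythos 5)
The paper does not give a proof of Theorem \ref{BMSvb}: it is quoted verbatim as \cite[Th.\,0.3]{MM12}, and the only internal ingredient used is its statement (together with the diagonal expansions of Theorems \ref{Bergdiagexp}--\ref{Toepdiagexp}, also cited). So there is no in-paper argument to compare against. What you wrote is instead a condensed reconstruction of the Ma--Marinescu proof from \cite{MM12}.

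As such a reconstruction, your outline points in the right direction: the near-diagonal rescaled Bergman kernel expansion with Bargmann--Fock leading term, the Gaussian moment identities pairing $\partial$ with $\dbar$, the appearance of $T_{E_p}([F,G])$ already at order $p^0$ (the genuinely new feature of the vector-bundle case), and coherent-state testing for the sup-norm statement. However, as you yourself note, the load-bearing content of the real proof lies exactly in the parts you defer. Ma and Marinescu do not proceed by direct double-convolution and ad hoc cancellation of $R^E$-terms; they first establish a \emph{characterization} of Berezin--Toeplitz operators via the structure of their rescaled kernel expansion (that the coefficients lie in the algebra generated by the model Bargmann--Fock projector), then prove closure of this class under composition, so that $T_{E_p}(F)T_{E_p}(G)=\sum_r p^{-r}T_{E_p}(C_r(F,G))+O(p^{-\infty})$ with $C_0=FG$, and finally compute $C_1$ to antisymmetrize into \eqref{C1}. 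This intermediate "Toeplitz algebra is closed" step is what guarantees that the errors are $O(p^{-2})$ in operator norm uniformly, which a pointwise Taylor/Gaussian computation at a fixed $x_0$ does not yield on its own. The "parity argument" for $J_1$ is genuine (it is odd in $Z-Z'$ in Kähler normal coordinates), but the assertion that the $R^E$-pieces of $J_2$ contribute only via $[F(x_0),\cdot]$ and are absorbed into $T_{E_p}([F,G])$ at order $p^{-2}$ is exactly the delicate noncommutative bookkeeping that requires the full machinery — stated, it is a plan, not a proof. In short: the strategy is the right one, but if you intend this as more than a pointer to \cite{MM12}, you would need to carry out the kernel-calculus closure argument rather than a one-point Gaussian computation.
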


This rest of the Section is dedicated to the statements of refined
semi-classical properties
of the Berezin-Toeplitz
quantization of $(E,h^{E})$ over $(X,p\,\om)$ as $p\to+\infty$,
taken from \cite{DLM06} and \cite{MM12}, which lie at the core of
the applications of quantization in stages in the next sections.
Writing $\pi_j:X\times X\fl X$, with $j=1,\,2$, for
the first and second projections, and setting
\begin{equation}
E_p\boxtimes E_p^*:=\pi_1^*E_p\otimes\pi_2^*E_p^*
\end{equation}
as a holomorphic Hermitian vector bundle over $X\times X$,
these refined semi-classical properties involve the
following fundamental notion.

\begin{defi}\label{Bergdef}
The \emph{Bergman kernel} of $(E_p,h^{E_p})$ over $X$ is the
Schwartz kernel of the orthogonal
projection $P_{E_p}:\cinf(X,E_p)\fl\HH_p$
with respect to the $L_2$-Hermitian product \eqref{L2E},
characterized as a section
$P_{E_p}(\cdot,\cdot)\in\cinf(X\times X,E_p\boxtimes E_p^*)$
for any $s\in\cinf(X,E_p)$ and $x\in X$
by the formula
\begin{equation}\label{ker}
P_{E_p}\,s\,(x):=\int_X P_{E_p}(x,y)s(y)\,d\nu_X(y)\,.
\end{equation}
\end{defi}
For all $x\in X$ and $p\in\N$, recall that
we write $\ev_{E_x}^*:E_{p,x}\fl \HH_p$ for the
dual of the evaluation map \eqref{evE} with respect to
$h^{E_p}$ and $\<\cdot,\cdot\>_p$.

\begin{lem}\label{Rplem}
For any $p\in\N,\,y\in X$ and $v\in E_{p,y}$, the holomorphic
section $\ev_{E_y}^*.v\in\HH_p$ satisfies the following
formula, for all $x\in X$,
\begin{equation}\label{ev=Pp}
\ev_{E_y}^*.v\,(x)=P_{E_p}(x,y).v\,.
\end{equation}
Furthermore, for any $p\in\N$,
the Rawnsley section $\rho_{E_p}\in\cinf(X,\Herm(E_p))$ of Definition
\ref{Rawnsect}
satisfies the following formula, for any $x\in X$,
\begin{equation}\label{Rp=Pp}
\rho_{E_p}(x)=P_{E_p}(x,x)\,.
\end{equation}
\end{lem}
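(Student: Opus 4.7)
My plan is to deduce both statements from the defining reproducing property of the Bergman kernel together with the Hermitian symmetry of the orthogonal projector $P_{E_p}$.

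First, I would establish the Hermitian symmetry of the Bergman kernel: since $P_{E_p}:\cinf(X,E_p)\to\HH_p\subset\cinf(X,E_p)$ is the orthogonal projector with respect to the $L_2$-product \eqref{L2E}, its Schwartz kernel satisfies $P_{E_p}(x,y)^*=P_{E_p}(y,x)$ as linear maps between the fibres, where the adjoint is taken with respect to $h^{E_p}$. Moreover, since the image of $P_{E_p}$ consists of holomorphic sections, the section $x\mapsto P_{E_p}(x,y).v$ belongs to $\HH_p$ for every fixed $y\in X$ and $v\in E_{p,y}$.

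Next, for formula \eqref{ev=Pp}, I would fix $y\in X$ and $v\in E_{p,y}$, and test both sides against an arbitrary $s\in\HH_p$ in the $L_2$-inner product. By the defining property of the adjoint evaluation,
\begin{equation}
\langle\ev_{E_y}^*.v,s\rangle_p=h^{E_p}_y(v,s(y))\,.
\end{equation}
On the other hand, using the kernel definition \eqref{ker}, the Hermitian symmetry of $P_{E_p}(\cdot,\cdot)$, and the fact that $P_{E_p}s=s$ for $s\in\HH_p$,
\begin{equation}
\langle P_{E_p}(\cdot,y).v,s\rangle_p=\int_X h^{E_p}_x\big(P_{E_p}(x,y).v,s(x)\big)\,d\nu_X(x)=h^{E_p}_y\big(v,P_{E_p}s\,(y)\big)=h^{E_p}_y(v,s(y))\,.
\end{equation}
Since the two holomorphic sections $\ev_{E_y}^*.v$ and $P_{E_p}(\cdot,y).v$ of $\HH_p$ have equal $L_2$-pairing against every $s\in\HH_p$, they must coincide, which gives \eqref{ev=Pp}.

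Finally, for \eqref{Rp=Pp}, I would simply apply the identity just proved to $y=x$ and evaluate at $x$: for any $v\in E_{p,x}$, Definition \ref{Rawnsect} gives
\begin{equation}
\rho_{E_p}(x).v=\ev_{E_x}\ev_{E_x}^*.v=\big(\ev_{E_x}^*.v\big)(x)=P_{E_p}(x,x).v\,,
\end{equation}
proving \eqref{Rp=Pp}. I do not anticipate any serious obstacle here; the only point requiring minor care is verifying the Hermitian symmetry of the kernel and the fact that $P_{E_p}(\cdot,y).v$ is genuinely holomorphic, both of which are immediate from $P_{E_p}$ being the orthogonal projector onto $\HH_p$.
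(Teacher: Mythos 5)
Your proof is correct and takes essentially the same route as the paper: both compare $L_2$-pairings against arbitrary $s\in\HH_p$, invoking the defining relation for $\ev_{E_y}^*$ and the Hermitian symmetry $P_{E_p}(x,y)^*=P_{E_p}(y,x)$ of the Bergman kernel, then specialize to $y=x$ for the Rawnsley section. The only (immaterial) difference is that you pair with $\ev_{E_y}^*.v$ in the first slot while the paper puts it in the second.
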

\begin{proof}
By definition, the dual of the evaluation \eqref{evE} at $y\in X$
is characterized by the following formula,
for all $v\in E_{p,y}$ and $s\in\HH_p$, 
\begin{equation}\label{characcoh}
\left\langle s,\ev_{E_y}^*.v\right\rangle_p
=h^{E_p}(s(y),v)\,.
\end{equation}
On the other hand, using the characterization \eqref{ker} of the
Bergman kernel as well as the formula $P(x,y)=P(y,x)^*$,
holding for the Schwartz kernel of any self-adjoint operator, we have
\begin{equation}
\begin{split}
\int_X\,h^{E_p}(s(x),P_{E_p}(x,y).v)\,d\nu_X(x)&=
\int_X\,h^{E_p}(P_{E_p}(y,x)s(x),v)\,d\nu_X(x)\\
&=h^{E_p}(P_{E_p}\,s(y),v)=h^{E_p}(s(y),v)\,.
\end{split}
\end{equation}
This proves formula \eqref{ev=Pp}. This readily implies formula
\eqref{Rp=Pp} by Definition \ref{Rawnsect} of the Rawnsley section,
as for all $x\in X$ and $v\in E_{p,x}$ we have
\begin{equation}
\rho_{E_p}(x).v=\ev_{E_x}\ev_{E_x}^*.v=\ev_{E_x}^*.v\,(x)
=P_{E_p}(x,x).v\,.
\end{equation}
\end{proof}

Let us assume from now on that $\nu_X$ is the Liouville measure.
Note that there is no loss in generality with this assumption, as
one can always accomodate this change by multiplying the Hermitian
metric $h^E$ by a scalar function. Such an operation leaves
unchanged the induced Hermitan metric on $\End(E)$, so that
all the results of this Section and the next are valid without any
modification in the general case.

For any holomorphic Hermitian vector bundle $(E,h^E)$,
recall that we write $R^E\in\Om^2(X,\End(E))$ for the curvature of its Chern
connection.
Let $K_X=\det(T^{(1,0)}X)^*$ be the \emph{canonical line bundle} of $X$
endowed with the Hermitian metric $h^{K_X}$ induced by $g^{TX}$.
The \emph{Ricci form} $\textup{Ric}(\om)\in\Om^2(X,\R)$ of
$(X,J,\om)$ is defined by the formula
\begin{equation}\label{Ricdef}
\Ric(\om):=-\sqrt{-1}R^{K_X}\,,
\end{equation}
and the \emph{scalar curvature} $\scal(\om)\in\cinf(X,\R)$ of $g^{TX}$
can be defined by the formula
\begin{equation}\label{scalcurvdef}
\scal(\om):=\<\om,\Ric(\om)\>_{g^{TX}}\,.
\end{equation}

\begin{theorem}\label{Bergdiagexp}
{\cite[Th.\,1.3]{DLM06}}
There exist Hermitian endomorphisms
$b_0,\,b_1,\,b_2\in\cinf(X,\End(E))$
such that for any $m\in\N$, there exists $C_m>0$ and $l\in\N$
such that for all $p\in\N$ big enough,
\begin{equation}
\left|\frac{1}{p^n}\rho_{E_p}-\left(b_0+\frac{1}{p}\,b_1+\frac{1}{p^2}b_2
\right)\right|_{\CC^m}\leq \frac{C_m}{p^3}\,,
\end{equation}
uniformly in the $\CC^m$-norm of the derivatives of
$h$ and $h^E$ up to order $l$. Furthermore, we have
\begin{equation}\label{Bergcoeff}
b_0=\Id_E\quad\text{and}\quad b_1=\frac{\scal(\om)}{8\pi}\Id_E+
\frac{\sqrt{-1}}{2\pi}\<\om,R^E\>_{g^{TX}}\,.
\end{equation}
\end{theorem}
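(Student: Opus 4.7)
The plan is to follow the local-analytic approach to Bergman kernel asymptotics developed by Dai–Liu–Ma, based on the identity $\rho_{E_p}(x)=P_{E_p}(x,x)$ of Lemma \ref{Rplem}. Concretely, I would realize the Bergman projection $P_{E_p}$ as the spectral projection onto the zero-eigenspace of the Kodaira Laplacian $\Box_p:=2\,\dbar^*\dbar$ acting on $\cinf(X,E_p)$. By Kodaira vanishing, for $p$ large $H^j(X,E_p)=0$ for $j>0$, so $\Box_p$ has a spectral gap of size $\sim p$ separating $0$ from the rest of its spectrum, which will be the analytic backbone of every estimate.

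First I would localize the problem. Using the spectral gap together with finite-propagation speed of the wave operator $\cos(t\sqrt{\Box_p})$, one obtains off-diagonal exponential decay of the form $|P_{E_p}(x,y)|\leq Cp^{n}\exp(-c\sqrt{p}\,d(x,y))$, uniformly in the $\CC^m$-norms of $h$ and $h^E$ up to some finite order $l=l(m)$. This reduces the analysis of $P_{E_p}(x_0,x_0)$, for an arbitrary $x_0\in X$, to the analysis of $\Box_p$ inside a small geodesic ball centered at $x_0$, modulo $O(p^{-\infty})$ errors with the required uniformity.

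Second, I would rescale. Fix normal complex coordinates $Z\in\C^n$ near $x_0$, choose the Bochner trivialization of $L$ (the local holomorphic frame whose squared Hermitian norm is $\exp(-\pi|Z|^2+O(|Z|^3))$), and trivialize $E$ by parallel transport along geodesic rays. Setting $t:=1/\sqrt{p}$ and performing the substitution $Z\mapsto tZ$, the conjugated and renormalized Kodaira Laplacian $\LL_t$ extends to a smooth family of second-order operators on $\C^n$ valued in $\End(E_{x_0})$, converging as $t\to 0$ to an explicit model operator $\LL_0$. On $\C^n$, $\LL_0$ is a shifted harmonic oscillator whose kernel is the Bargmann–Fock-type space; its Bergman projection $P_0$ has a classical explicit Gaussian Schwartz kernel $\PP_0(Z,Z')\otimes \Id_E$, with $\PP_0(0,0)=1$. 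This already identifies the leading coefficient $b_0=\Id_E$.

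Third, I would expand formally. Writing $\LL_t=\LL_0+\sum_{r\geq 1}t^r\,\OO_r$, where each $\OO_r$ is determined by the Taylor coefficients at $x_0$ of $g^{TX}$, the Kähler potential of $L$, and the connection 1-form of $E$, I would express the rescaled Bergman projection as a contour integral
\begin{equation*}
P_t=\frac{1}{2\pi\sqrt{-1}}\oint_{\Gamma}(\lambda-\LL_t)^{-1}\,d\lambda
\end{equation*}
around a small loop $\Gamma\subset\C$ enclosing $0$ and no other spectrum of $\LL_0$, which by the spectral gap is legitimate for all $t$ small. Iterating the resolvent identity yields an asymptotic expansion $P_t=\sum_{r\geq 0}t^r F_r$ in which each $F_r$ is computed from $P_0$ and $\OO_1,\dots,\OO_r$. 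A parity argument in $Z$, combined with the fact that $\OO_r$ has a definite parity depending on $r$, forces $F_r(0,0)=0$ for odd $r$. Consequently only integer powers of $1/p=t^2$ survive on the diagonal, yielding the expansion of $p^{-n}\rho_{E_p}(x_0)$ through order $1/p^2$ with remainder $O(1/p^3)$; the existence of $b_2$ is automatic from this machinery.

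The main obstacle is the explicit identification of the subleading coefficient $b_1$. It requires Taylor expanding the Kähler potential of $L$ to fourth order (to extract the scalar curvature via the Ricci form) and the connection of $E$ to second order (to extract $\<\om,R^E\>_{g^{TX}}$), inserting these expansions into $\OO_1$ and $\OO_2$, and evaluating a sum of Gaussian integrals against $\PP_0$ whose combinatorics is delicate. The end result, after unwinding the normalizations introduced by the rescaling $Z\mapsto tZ$, is precisely
\begin{equation*}
b_1=\frac{\scal(\om)}{8\pi}\,\Id_E+\frac{\sqrt{-1}}{2\pi}\,\<\om,R^E\>_{g^{TX}}\,.
\end{equation*}
The uniformity statement in derivatives of $h$ and $h^E$ up to some order $l=l(m)$ follows by tracking that the Sobolev estimates in the localization step and the operator-norm bounds on $(\lambda-\LL_t)^{-1}$ depend only on finitely many derivatives of the metrics.
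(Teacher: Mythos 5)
The paper does not prove this theorem; it simply invokes \cite[Th.\,1.3]{DLM06} (together with the Liouville-measure normalization fixed just before the statement). Your sketch is a faithful reconstruction of precisely that Dai--Liu--Ma argument — spectral gap from Kodaira vanishing, finite-propagation-speed localization, rescaling in Bochner coordinates with parallel transport on $E$, the model Bargmann--Fock projector giving $b_0=\Id_E$, the resolvent expansion with the parity argument killing odd half-powers, and the Gaussian-moment computation of $b_1$ — so it is correct and takes the same route as the cited source.
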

Lemma \ref{Rplem} and Theorem \ref{Bergdiagexp} imply in particular
that as $p\fl+\infty$, we have
\begin{equation}\label{npfla}
\dim\HH_p=\int_X\,\Tr^{E_p}[\rho_{E_p}(x)]\,d\nu_X(x)=p^n\Vol_h(X)\rk(E)+O(p^{n-1})\,,
\end{equation}
which can also be seen as a consequence of the classical Hirzebruch-Riemann-Roch
formula. For any $F\in\cinf(X,\Herm(E))$, consider the operator
$T_{E_p}(F):=P_{E_p}\,F\,P_{E_p}$ acting on $\cinf(X,E_p)$ as in
Proposition \ref{BTquantfla}, so that it coincides with the Berezin-Toeplitz
quantization of $F$ of Definition \ref{BTquantdefE}
when restricted to $\HH_p$, and write
$T_{E_p}(F)(\cdot,\cdot)\in\cinf(X\times X,E_p\boxtimes E_p^*)$
for its Schwartz kernel. By the basic composition formula
for operators with smooth Schwartz kernels,
for all $x,\,y\in X$, we have
\begin{equation}
T_{E_p}(F)(x,y)=\int_X\,P_{E_p}(x,w)F(w)P_{E_p}(w,y)\,d\nu_X(w)\,.
\end{equation}
Let $\<\cdot,\cdot\>_{L^2}$ be the $L_2$-Hermitian product on
$\End(E)$ induced by $h^E$ and the Liouville measure $\nu_X$,
write $\|\cdot\|_{L_2}$ for the associated norm
and write $L_2(X,\End(E))$ for the induced space of square-integrable
sections of $\End(E)$.
The \emph{Bochner Laplacian} $\Delta$
is the second order
differential operator characterized
for any $F_1,\,F_2\in\cinf(X,\End(E))$ with support in a local
chart by the formula
\begin{equation}\label{delta}
\<\Delta F_1,F_2\>_{L_2}=\sum_{j=1}^{2n}
\int_X\left\langle\nabla^{\End(E)}F_1(x),
\nabla^{\End(E)}F_2(x)\right\rangle_{\End(E)\otimes T^*X}\,d\nu_X(x)\,,
\end{equation}
where the pairing on $\End(E)\otimes T^*X$ with values in $\C$
is the one induced by
$h^E$ and $g^{TX}$.

\begin{theorem}\label{Toepdiagexp}
{\cite[Th.\,0.1,\,(0.13)]{MM12}}
For any $F\in\cinf(X,\End(E))$, there exist Hermitian endomorphisms
$b_{0}(F),\,b_{1}(F)\in\cinf(X,\End(E))$
such that for any $m\in\N$, there exists $C_m>0$ and $l\in\N$
such that for any $x\in X$ and all $p\in\N$ big enough,
\begin{equation}
\left|\frac{1}{p^n}T_{E_p}(F)(x,x)
-\left(F(x)+\frac{1}{p}\,b_{1}(F)(x)+\frac{1}{p^2}\,b_{2}(F)(x)\right)
\right|_{\CC^m}\leq \frac{C_m}{p^3}\,,
\end{equation}
uniformly in the $\CC^m$-norm of the derivatives of
$F,\,h$ and $h^E$ up to order $l$. Furthermore, we have
\begin{equation}\label{Toepcoeff}
b_{1}(F)=\frac{\scal(\om)}{8\pi}F+
\frac{\sqrt{-1}}{4\pi}(\<\om,R^E\>_{g^{TX}}\,F+F\,\<\om,R^E\>_{g^{TX}})
-\frac{1}{4\pi}\Delta F\,,
\end{equation}
where $\<\cdot,\cdot\>_{g^{TX}}$
denotes the pairing on $\Om^2(X,\End(E))$ with values in $\End(E)$
induced by $g^{TX}$.

Finally, in case $(E,h^E)$ is the trivial line bundle,
for all $f\in\cinf(X,\R)$ we have
\begin{equation}
b_{2}(f)=b_2\,f+\frac{\Delta^2}{32\pi^2}f
-\frac{\scal(\om)}{32\pi^2}\Delta f-\frac{\sqrt{-1}}{8\pi^2}
\<\Ric(\om),\partial\dbar f\>_{g^{TX}}\,,
\end{equation}
where $b_2\in\cinf(X,\C)$ is as in Theorem \ref{Bergdiagexp}
for $E=\C$.
\end{theorem}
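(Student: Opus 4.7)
The plan is to combine the composition formula for the Schwartz kernel,
\begin{equation*}
T_{E_p}(F)(x_0,x_0) = \int_X P_{E_p}(x_0,w)\, F(w)\, P_{E_p}(w,x_0)\, d\nu_X(w),
\end{equation*}
with the near-diagonal asymptotic expansion of the Bergman kernel refining Theorem \ref{Bergdiagexp}, and to evaluate the resulting integral as explicit Gaussian moments against the Bargmann-Fock model kernel, uniformly in $x_0\in X$.

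First I would fix $x_0\in X$ and work in K\"ahler normal coordinates centered at $x_0$, with $L$ and $E$ trivialized by parallel transport along geodesic rays with respect to their Chern connections. The Dai-Liu-Ma off-diagonal expansion, in the form extended to vector bundles by Ma-Marinescu, then yields, in rescaled coordinates $Z,W\in T_{x_0}X\simeq\C^n$,
\begin{equation*}
p^{-n}\,P_{E_p}\!\left(\tfrac{Z}{\sqrt p},\tfrac{W}{\sqrt p}\right) = \mathcal{P}(Z,W)\sum_{r=0}^{N} p^{-r/2}\,J_r(Z,W) + O(p^{-(N+1)/2}),
\end{equation*}
where $\mathcal{P}(Z,W) = \exp\!\bigl(-\tfrac{\pi}{2}(|Z|^2+|W|^2) + \pi Z\!\cdot\!\bar W\bigr)\Id_E$ is the Bargmann-Fock model, $J_0=\Id_E$, and each $J_r$ is an $\End(E)$-valued polynomial whose coefficients involve a finite jet of $g^{TX}$, $R^L$ and $R^E$ at $x_0$. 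The exponential off-diagonal decay of $P_{E_p}$ localizes the integral to a $p^{-1/2}$-neighborhood of $x_0$ up to an $O(p^{-\infty})$ error.

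After substituting $w = \exp_{x_0}(Z/\sqrt p)$, writing $d\nu_X = p^{-n}(1+p^{-1}\kappa_1+\cdots)\,dZ$ for the Jacobian expansion, and Taylor-expanding $F(\exp_{x_0}(Z/\sqrt p))$ in powers of $p^{-1/2}$, the integral reduces to a sum of Gaussian moments of polynomials in $(Z,\bar Z)$ against $|\mathcal P(0,Z)|^2 = e^{-\pi|Z|^2}$. The odd moments vanish by parity, killing the $p^{-1/2}$ contribution. At order $p^{-1}$ three pieces survive: the Hessian of $F$ at $x_0$ integrated against $Z^j\bar Z^k|\mathcal P(0,Z)|^2$ produces the $-\tfrac{1}{4\pi}\Delta F$ term after recognising the Bochner Laplacian $\Delta$ of \eqref{delta}; the subleading Bergman coefficient $b_1$ from \eqref{Bergcoeff} enters once through each factor $P_{E_p}$, producing the symmetric combination $\tfrac12(b_1 F+F b_1)$, which accounts for the $\scal(\om)/8\pi$ and symmetrized $\<\om,R^E\>_{g^{TX}}$ terms in \eqref{Toepcoeff}. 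For the scalar $b_2(f)$, one tracks all $p^{-2}$ terms: quartic Gaussian moments of the Hessian of $f$ yield $\Delta^2 f/(32\pi^2)$, the Hessian coupled to $b_1$ yields the $\scal(\om)\Delta f$ correction, and the curvature contributions in $J_2$ paired with the Hessian of $f$ produce $\<\Ric(\om),\partial\dbar f\>_{g^{TX}}$. Uniformity in the $\CC^m$-norms of $F$, $h$ and $h^E$ is inherited from the corresponding uniformity in Theorem \ref{Bergdiagexp} and in the off-diagonal expansion.

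The hard part is the algebraic identification, at order $p^{-2}$, of the polynomial combinations of $J_2$, quartic Gaussian moments, Jacobian corrections and higher Taylor coefficients of the K\"ahler potential as intrinsic geometric quantities. This requires repeated use of the K\"ahler identities and the Bianchi identity to recognize curvature traces as $\scal(\om)$ and $\Ric(\om)$, and is the source of the substantial bookkeeping underlying \cite[Th.\,0.1]{MM12}.
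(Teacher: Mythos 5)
The paper does not prove Theorem \ref{Toepdiagexp}; it is quoted verbatim from \cite[Th.\,0.1, (0.13)]{MM12}, so there is no ``paper's own proof'' to compare against. Your sketch is the standard Dai--Liu--Ma/Ma--Marinescu strategy (localize via exponential off-diagonal decay, pass to the rescaled Bargmann--Fock model in K\"ahler normal coordinates with parallel trivializations, and evaluate Gaussian moments), and at that level of generality it is the right plan and the one actually used in \cite{MM12}.

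There is, however, a genuine gap in the step identifying $b_1(F)$. You assert that ``the subleading Bergman coefficient $b_1$ enters once through each factor $P_{E_p}$, producing the symmetric combination $\tfrac12(b_1F+Fb_1)$.'' That is not a valid derivation: $b_1$ is the value $J_2(0,0)$ of the near-diagonal polynomial, whereas in the composition integral what actually appears is the Gaussian average of $J_2(0,Z)$ against $e^{-\pi|Z|^2}$, which is \emph{not} $J_2(0,0)$. Moreover, at order $p^{-1}$ one cannot simply add a $J_2\times F\times J_0$ piece, a $J_0\times F\times J_2$ piece, and a Hessian-of-$F$ piece: there are also cross-terms of $J_1$ with the first covariant derivative of $F$, a $J_1\times F\times J_1$ piece, and the $\kappa_1$ correction from the Riemannian Jacobian, all of which survive Gaussian integration. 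Only after carrying out this bookkeeping in full, and then resummarizing using the reproducing identity $\sum_{r+s=2}\int J_r(0,Z)J_s(Z,0)e^{-\pi|Z|^2}\,dZ = J_2(0,0)$, does the final answer organize itself into $\tfrac12(b_1F+Fb_1)-\tfrac{1}{4\pi}\Delta F$. You flag this kind of bookkeeping as ``the hard part'' only for $b_2(f)$ at order $p^{-2}$, but the same non-trivial accounting is already needed at order $p^{-1}$, and the shortcut you propose in its place does not, as stated, constitute a proof of \eqref{Toepcoeff}.
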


In the context of Section
\ref{stagesec},
it is more natural to consider instead the Laplacian \eqref{KodLapdef},
whose relation with the Bochner Laplacian \eqref{delta} is given by the
following \emph{Weitzenböck formula}, which can be found in
\cite[Prop.\,1.2]{Bis87}.

\begin{prop}\label{Weitzenfla}
For any $F\in\cinf(X,\End(E))$, the following identity
holds,
\begin{equation}
\Box\,F=\Delta F
-\sqrt{-1}\left[\<\om,R^E\>_{g^{TX}},F\,\right]\,.
\end{equation}
\end{prop}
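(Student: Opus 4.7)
The plan is to derive the identity from the classical Bochner-Kodaira-Nakano formula applied to $\End(E)$ endowed with the Chern connection induced from $(E, h^E)$. Recall that the Chern curvature $R^{\End(E)}$ acts on any $F\in\cinf(X,\End(E))$ by the commutator $R^{\End(E)}\cdot F = [R^E, F]$.

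First, I would recall the Kähler identity $[\dbar^*, L] = \sqrt{-1}\,\partial$ on $\End(E)$-valued forms, where $L = \omega\wedge\cdot$ is the Lefschetz operator and $\partial$ denotes the $(1,0)$-part of the Chern connection on $\End(E)$. Standard manipulation of this identity together with its formal adjoint yields the Bochner-Kodaira-Nakano formula on $\End(E)$-valued forms,
\begin{equation*}
\dbar\dbar^* + \dbar^*\dbar \;=\; \partial\partial^* + \partial^*\partial + \bigl[\sqrt{-1}\,R^{\End(E)}, \Lambda\bigr],
\end{equation*}
where $\Lambda$ is the formal adjoint of $L$.

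Second, I would evaluate this identity on a section $F\in\cinf(X,\End(E))$, viewed as a form of bidegree $(0,0)$. Then $\dbar^* F = \partial^* F = 0$ and $\Lambda F = 0$, so the only surviving curvature term is $-\Lambda(\sqrt{-1}\,R^{\End(E)}\cdot F)$. Since $R^{\End(E)}\cdot F$ is the $\End(E)$-valued $(1,1)$-form $[R^E, F]$, and $\Lambda$ on $(1,1)$-forms coincides with contraction with $\omega$ via $g^{TX}$, one obtains
\begin{equation*}
\dbar^*\dbar F \;=\; \partial^*\partial F - \sqrt{-1}\,[\langle\omega, R^E\rangle_{g^{TX}}, F]\,.
\end{equation*}

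Finally, I would invoke the orthogonal bidegree decomposition $\nabla^{\End(E)} = \partial + \dbar$ of the Chern connection on a Kähler manifold, which splits the Bochner Laplacian on sections as
\begin{equation*}
\Delta F \;=\; (\nabla^{\End(E)})^*\nabla^{\End(E)} F \;=\; \partial^*\partial F + \dbar^*\dbar F\,.
\end{equation*}
Using the previous identity to eliminate $\partial^*\partial F$ gives $\Delta F = 2\dbar^*\dbar F + \sqrt{-1}\,[\langle\omega, R^E\rangle_{g^{TX}}, F]$, which by the definition \eqref{KodLapdef} of $\Box$ rearranges to the announced formula. The only point requiring care — rather than a genuine obstacle — is the bookkeeping of signs and factors of $\sqrt{-1}$: one must check that the conventions for $\omega$, $R^E$, the adjoint $\Lambda$ and the Kähler identities used here are consistent with those fixed in Section \ref{BTsec}, so that the commutator term appears with the correct sign and the factor $2$ in $\Box = 2\dbar^*\dbar$ is correctly absorbed.
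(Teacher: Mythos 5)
Your derivation is correct, and the bookkeeping of signs does work out as you claim: from the Bochner--Kodaira--Nakano identity evaluated on a $(0,0)$-form $F$, the only surviving term from $[\sqrt{-1}R^{\End(E)},\Lambda]$ is $-\Lambda\bigl(\sqrt{-1}R^{\End(E)}F\bigr)=-\sqrt{-1}\bigl[\langle\omega,R^E\rangle_{g^{TX}},F\bigr]$, and combining this with the orthogonal splitting $\Delta F=\partial^*\partial F+\dbar^*\dbar F$ yields precisely $\Box F=\Delta F-\sqrt{-1}\bigl[\langle\omega,R^E\rangle_{g^{TX}},F\bigr]$. The one point worth flagging is that the paper does not actually prove this Proposition: it is stated as a known Weitzenb\"ock formula with a citation to Bismut \cite[Prop.\,1.2]{Bis87}. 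Your argument is therefore a self-contained derivation where the authors simply quote a reference; it is the standard route to this identity and is a perfectly good substitute for the citation. One small thing to make explicit if you were to write this out in full: the identification $\Lambda\beta=\langle\omega,\beta\rangle_{g^{TX}}$ for a $(1,1)$-form $\beta$ (which you use when passing from $-\Lambda(\sqrt{-1}[R^E,F])$ to $-\sqrt{-1}[\langle\omega,R^E\rangle_{g^{TX}},F]$) holds with no extra normalization constant precisely because $\Lambda$ is defined as the formal adjoint of $L=\omega\wedge\cdot$ and $F$ is a $0$-form, as one can check by pairing $Lf$ against $\beta$ for a scalar $f$; since the paper's pairing $\langle\cdot,\cdot\rangle_{g^{TX}}$ is the one induced by $g^{TX}$, the conventions match and no stray factor appears.
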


\section{Spectral estimates for Berezin transforms}
\label{specsec}


In this Section, we use the setting of quantization in stages developed
in Section \ref{BTsec} to extend the study of Berezin-Toeplitz quantization
from the point of view of quantum measurement in \cite{IKPS19}
to the case of vector bundles. In particular,
we introduce a natural notion of a \emph{Berezin transform} in this context,
and establish asymptotic estimates as $p\to+\infty$ on
its \emph{spectral gap} in the manner of \cite{IKPS19}. We then apply
these estimates to Donaldson's iterations towards $\nu$-balanced metrics
on stable vector bundles.

\subsection{Berezin transform on vector bundles}
\label{specbersec}

Recall the notations of Section \ref{BTvbsec}.
In this Section, we introduce the
\emph{Berezin transform} in the context of quantization in stages,
which is a key tool for the study of the
quantum-classical correspondence for Berezin-Toeplitz quantization.
The main goal of this Section is to give a proof of Theorem \ref{gapvb}.

Recall from Proposition \ref{BersymbE} that we write
$T^*_{E_p}:\Herm(\HH_p)\to\cinf(X,\Herm(E))$ for the dual of
the Berezin-Toeplitz quantization map
$T_{E_p}:\cinf(X,\Herm(E))\to\Herm(\HH_p)$ with respect to the scalar
product \eqref{L2alphaE}.

\begin{defi}\label{BpdefE}
The \emph{Berezin transform} of $(E,h^E)$ over $(X,p\,\om)$
is the linear operator acting on $\cinf(X,\Herm(E))$ defined by
\begin{equation}
\cB_{E_p}:=T_{E_p}^*\circ T_{E_p}:\cinf(X,\Herm(E))
\longrightarrow\cinf(X,\Herm(E))\,.
\end{equation}
\end{defi}
Definition \ref{BpdefE} naturally extends the definition
of a Berezin transform given in \cite[\S\,2]{IKPS19} in the case
of $E=\C$. In particular, it retains most of the
essential properties of a \emph{Markov operator}.
Namely, it is by definition a positive self-adjoint operator
with respect to the- $L_2$-scalar product \eqref{L2alphaE}, and since
it factorizes through the finite dimensional vector space $\HH_p$,
it has finite image, so that it admits a discrete spectrum inside
$[0,+\infty)$ and its positive eigenvalues have finite multiplicity.

The following Proposition describes the behavior of the Berezin transform
under quantization in stages, where $(E,h^E)$ over $X$ is the quantum-classical hybrid of a prequantized Kähler fibration
$\pi:(M,\om^M)\to (X,\om)$ as in Definition \ref{preqfib}.

\begin{prop}\label{BpdefEstage}
The Berezin transform of Definition \ref{BpdefE} 
is characterized by the following formula, for all $f\in\cinf(M,\R)$,
\begin{equation}
\<\cB_{E_p}T_\pi(f),T_\pi(f)\>_{W_{E_p}}=\<\cB_p\,f,f\>_{W_p}\,,
\end{equation}
where $\cB_p:\cinf(M,\R)\fl\cinf(M,\R)$ is the Berezin transform
of $\cL\otimes\pi^*L^p$ over $M$, and $T_{\pi}(f)\in\cinf(X,\Herm(E_p))$
is the Berezin-Toeplitz
quantization of $f\in\cinf(M,\R)$ in the fibres of $\pi:M\fl X$.
\end{prop}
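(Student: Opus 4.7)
The plan is to unfold both sides to the common quantity $\Tr^{\HH_p}[T_p(f)^2]$, using the duality that defines the Berezin symbol together with the functoriality result of Proposition \ref{Tfunct}.

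First I would rewrite the right hand side. By Definition \ref{BpdefE} applied to the scalar case $E=\C$, we have $\cB_p=T_p^*\circ T_p$, so
\begin{equation*}
\<\cB_p f,f\>_{W_p}=\<T_p^*T_p(f),f\>_{W_p}=\<\<T_p(f),T_p(f)\>\>=\Tr^{\HH_p}\bigl[T_p(f)^2\bigr],
\end{equation*}
where the middle equality is the defining duality of $T_p^*$ with respect to $\<\cdot,\cdot\>_{W_p}$ and $\<\<\cdot,\cdot\>\>$ given in Proposition \ref{BTdequantdef}, and the last is the definition \eqref{trscal} of $\<\<\cdot,\cdot\>\>$ together with the self-adjointness of $T_p(f)$.

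Next I would treat the left hand side identically, but now using the vector bundle versions of all objects. By Definition \ref{BpdefE} we have $\cB_{E_p}=T_{E_p}^*\circ T_{E_p}$, and the defining duality of $T_{E_p}^*$ in Definition \ref{BersymbE} with respect to $\<\cdot,\cdot\>_{W_{E_p}}$ and $\<\<\cdot,\cdot\>\>$ gives
\begin{equation*}
\<\cB_{E_p}T_\pi(f),T_\pi(f)\>_{W_{E_p}}=\<\<T_{E_p}(T_\pi(f)),T_{E_p}(T_\pi(f))\>\>=\Tr^{\HH_p}\bigl[(T_{E_p}\circ T_\pi)(f)^2\bigr].
\end{equation*}

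At this point the only remaining ingredient is Proposition \ref{Tfunct}, which asserts $T_p=T_{E_p}\circ T_\pi$ for $p$ large enough. Substituting this identity into the right hand side of the display above yields $\Tr^{\HH_p}[T_p(f)^2]$, matching the expression obtained for $\<\cB_p f,f\>_{W_p}$ and concluding the proof.

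There is no real obstacle here: the statement is essentially a compatibility check, and the content reduces to the fact that Definition \ref{BersymbE} of the Berezin symbol is dictated precisely by the scalar product \eqref{L2alphaE} weighted by $\rho_{E_p}$, so that the passage from $\<T^*T\cdot,\cdot\>$ to $\<\<T\cdot,T\cdot\>\>$ does not pick up any residual weight. The only subtle point worth flagging in the write-up is that this is exactly the reason why the \emph{trace-pairing} definition of the Berezin symbol (Definition \ref{BersymbE}), rather than the naive pointwise formula \eqref{BTdequantfla}, is the correct one in the vector bundle setting: it is what makes the diagram commute in the functoriality of quantization in stages.
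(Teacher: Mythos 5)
Your proof is correct and is precisely the argument the paper compresses into one sentence: unfold $\cB_{E_p}=T_{E_p}^*\circ T_{E_p}$ and $\cB_p=T_p^*\circ T_p$ via the defining dualities, observe that both sides then become traces $\Tr^{\HH_p}[T_{E_p}(T_\pi(f))^2]$ and $\Tr^{\HH_p}[T_p(f)^2]$, and invoke Proposition \ref{Tfunct} to identify them. Your closing remark about the trace-pairing definition of the Berezin symbol being the right one for the functoriality to hold is also exactly the point the authors emphasize in the discussion following Definition \ref{BersymbE}.
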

\begin{proof}
This is a straightforward consequence of Proposition \ref{Tfunct} and
of the definition of $T^*_{E_p}$ as the dual of $T_{E_p}$
with respect to the $L_2$-Hermitian product \eqref{L2alphaE}.
\end{proof}

The following basic result explains the role played by the asymptotic
expansions of Theorems \ref{Bergdiagexp} and \ref{Toepdiagexp}
in the proof Theorem \ref{mainthvb}.

\begin{prop}\label{Bpflaprop}
The Berezin transform of Definition \ref{BpdefE} satisfies
the following formula, for all $F\in\cinf(X,\Herm(E))$ and
$x\in X$,
\begin{equation}\label{Bpfla}
\cB_{E_p}(F)(x)\rho_{E_p}(x)=T_{E_p}(F)(x,x)\,.
\end{equation}
\end{prop}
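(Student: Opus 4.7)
The plan is to match both sides of the claimed identity by expressing each in terms of the evaluation maps $\ev_{E_x}$ and then comparing via the duality defining the Berezin symbol.

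First, by Proposition \ref{BTquantfla} we have $T_{E_p}(F) = P_{E_p}\,F\,P_{E_p}$, so its Schwartz kernel satisfies
\[
T_{E_p}(F)(x,y) = \int_X P_{E_p}(x,w)\,F(w)\,P_{E_p}(w,y)\,d\nu_X(w).
\]
Substituting the Bergman kernel identity $P_{E_p}(x,y) = \ev_{E_x}\,\ev_{E_y}^*$ from Lemma \ref{Rplem}, specializing to the diagonal $y = x$, and recognizing the remaining integral as $T_{E_p}(F)$ via \eqref{BTmapflaE}, yields
\[
T_{E_p}(F)(x,x) = \ev_{E_x}\,T_{E_p}(F)\,\ev_{E_x}^*.
\]

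Next, I would translate $\cB_{E_p}(F)(x)\,\rho_{E_p}(x)$ into the same expression through the duality defining $T_{E_p}^*$ with respect to the scalar products \eqref{trscal} and \eqref{L2alphaE}. Testing against an arbitrary $F'\in\cinf(X,\Herm(E))$, on the one hand
\[
\Tr^{\HH_p}\!\bigl[T_{E_p}(F)\,T_{E_p}(F')\bigr] = \langle F',\cB_{E_p}(F)\rangle_{W_{E_p}} = \int_X \Tr^E\!\bigl[F'(x)\,\rho_{E_p}(x)\,\cB_{E_p}(F)(x)\bigr]\,d\nu_X(x);
\]
on the other hand, substituting $T_{E_p}(F') = \int_X \ev_{E_y}^*\,F'(y)\,\ev_{E_y}\,d\nu_X(y)$ from \eqref{BTmapflaE}, using cyclicity of the trace, and applying the first step, gives
\[
\Tr^{\HH_p}\!\bigl[T_{E_p}(F)\,T_{E_p}(F')\bigr] = \int_X \Tr^E\!\bigl[F'(y)\,T_{E_p}(F)(y,y)\bigr]\,d\nu_X(y).
\]
Since this holds for every Hermitian $F'$, pointwise non-degeneracy of the trace pairing on $\Herm(E_{p,x})$ extracts the identity $T_{E_p}(F)(x,x) = \rho_{E_p}(x)\,\cB_{E_p}(F)(x).$

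Finally, the three operators $T_{E_p}(F)(x,x)$, $\rho_{E_p}(x)$, and $\cB_{E_p}(F)(x)$ are all Hermitian — the first as the diagonal of the Schwartz kernel of the self-adjoint operator $T_{E_p}(F)$, the second by Definition \ref{Rawnsect}, and the third because $T_{E_p}^*$ maps into $\cinf(X,\Herm(E))$ by construction. The Hermiticity of $\rho_{E_p}(x)\,\cB_{E_p}(F)(x)$ therefore forces $\rho_{E_p}(x)$ and $\cB_{E_p}(F)(x)$ to commute, so that the identity can be rewritten equivalently in the claimed order $\cB_{E_p}(F)(x)\,\rho_{E_p}(x) = T_{E_p}(F)(x,x)$. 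The main subtlety to handle carefully is precisely this last commutativity step, since the duality naturally produces the opposite ordering of $\rho_{E_p}$ and $\cB_{E_p}(F)$; the ability to swap them relies crucially on the fact that the Berezin symbol is Hermitian-valued.
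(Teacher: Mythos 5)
Your proof is correct and follows essentially the same strategy as the paper: expand $\Tr^{\HH_p}[T_{E_p}(F)T_{E_p}(F')]$ in two ways through the duality defining the Berezin symbol and the trace formula for smooth Schwartz kernels. The paper sidesteps your final commutativity step by testing $\<\cB_{E_p}(F)\rho_{E_p},F'\>_{W_{E_p}}$, which lands directly on the claimed ordering; your pairing $\<F',\cB_{E_p}(F)\>_{W_{E_p}}$ instead produces $\rho_{E_p}(x)\cB_{E_p}(F)(x)$, and your observation that this product is Hermitian (being equal to $T_{E_p}(F)(x,x)$), hence that $\rho_{E_p}(x)$ and $\cB_{E_p}(F)(x)$ commute, is a correct way to close the gap. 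One small imprecision worth flagging: when extracting the pointwise identity you appeal to ``non-degeneracy of the trace pairing on $\Herm(E_{p,x})$,'' but at that stage $\rho_{E_p}(x)\cB_{E_p}(F)(x)$ is not yet known to be Hermitian; what the argument really uses is that Hermitian endomorphisms span $\End(E_{p,x})$ over $\C$, so a $\C$-linear functional $F'\mapsto\Tr^E[F'\,C]$ vanishing on all Hermitian $F'$ vanishes identically and forces $C=0$.
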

\begin{proof}
Recall the definition \eqref{L2alphaE} of
the $L_2$-scalar product $\<\cdot,\cdot\>_{W_{E_p}}$.
By Proposition \ref{BTquantfla} and Definition \ref{BpdefE},
using $P_{E_p}P_{E_p}=P_{E_p}$ and the basic trace formula for
operators with smooth Schwartz kernels, for any
$F_1,\,F_2\in\cinf(X,\End(E))$ we have
\begin{equation}
\begin{split}
\<\cB_{E_p}&(F_1)\rho_{E_p},F_2\>_{W_{E_p}}
=\<\cB_{E_p}(F_1),\rho_{E_p}F_2\>_{W_{E_p}}\\
&=\<\<T_{E_p}(F_1),T_{E_p}(\rho_{E_p}F_2)\>\>
=\Tr^{\HH_p}\left[P_{E_p}F_1P_{E_p}\rho_{E_p}F_2\right]\\
&=\int_X\int_X \Tr^{E_p}\left[P_{E_p}(x,y)F_1(y)
P_{E_p}(y,x)\rho_{E_p}(x)F_2(x)\right]
\,d\nu_X(x)d\nu_X(y)\\
&=\int_X \Tr^{E_p}\left[T_{E_p}(F_1)(x,x)
\rho_{E_p}(x)F_2(x)\right]
\,d\nu_X(x)\,.
\end{split}
\end{equation}
This proves formula \eqref{Bpfla}.
\end{proof}
%


Let us now describe the proof of Theorem \ref{mainthvb}, following the
proof of the analogous result for the scalar Berezin transform
in \cite[\S\,3]{IKPS19}.
The following Proposition is an extension of the refined
Karabegov-Schlichenmaier formula of \cite[Prop.\,3.8]{IKPS19}.

\begin{prop}\label{KSexpprop}
For any $m\in\N$, there exists $l\in\N$ and a constant
$C_m>0$, uniform in the
$\CC^m$-norm of the derivatives of $h^L$ and $h^E$ up to order $l$,
such that for any
$F\in\cinf(X,\End(E))$ and all $p\in\N$ big enough, we have
\begin{equation}\label{KSexp}
\left|\,\cB_{E_p}F-F+\frac{\Box}
{4\pi p} F\,\right|_{\CC^m}\leq
\frac{C_m}{p^2}|F|_{\CC^{m+4}}\;.
\end{equation}
\end{prop}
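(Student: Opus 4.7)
The plan is to express $\cB_{E_p}F$ pointwise as a quotient of expansions already known from the previous sections, then match coefficients via the Weitzenböck formula.

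First I would invoke Proposition \ref{Bpflaprop} to write, at each $x\in X$,
\begin{equation*}
\cB_{E_p}(F)(x)=T_{E_p}(F)(x,x)\,\rho_{E_p}(x)^{-1},
\end{equation*}
which makes sense for $p$ large enough since Theorem \ref{Bergdiagexp} together with $b_0=\Id_E$ guarantees that $p^{-n}\rho_{E_p}(x)$ is a small perturbation of $\Id_E$ and is therefore invertible with inverse admitting an asymptotic expansion obtained by the standard Neumann series. Concretely, Theorem \ref{Bergdiagexp} gives
\begin{equation*}
p^{-n}\rho_{E_p}=\Id_E+\tfrac{1}{p}\,b_1+O(p^{-2}),
\qquad
p^{n}\rho_{E_p}^{-1}=\Id_E-\tfrac{1}{p}\,b_1+O(p^{-2}),
\end{equation*}
in $\CC^m$-norm, uniformly in the metric data up to some finite order.

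Next I would substitute the expansion of Theorem \ref{Toepdiagexp} for $p^{-n}T_{E_p}(F)(x,x)=F+\tfrac{1}{p}b_1(F)+O(p^{-2})$ and multiply the two expansions. The $p^0$-term is $F$ and the $p^{-1}$-term is $b_1(F)-F\,b_1$. Using the explicit formulas \eqref{Bergcoeff} and \eqref{Toepcoeff}, the scalar curvature contributions cancel and the $R^E$ contributions combine into a commutator, giving
\begin{equation*}
b_1(F)-F\,b_1=\frac{\sqrt{-1}}{4\pi}\bigl[\langle\om,R^E\rangle_{g^{TX}},F\bigr]-\frac{1}{4\pi}\Delta F.
\end{equation*}
By the Weitzenböck formula of Proposition \ref{Weitzenfla}, the right-hand side is exactly $-\tfrac{1}{4\pi}\Box F$, which yields the sought first-order coefficient.

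Finally, to get the $\CC^m$-estimate with the claimed dependence $C_m|F|_{\CC^{m+4}}$, I would track the order of derivatives of $F$ needed: $\Box F$ involves two derivatives, and the Leibniz rule for the product of two $\CC^m$-expansions — together with the inversion of $\rho_{E_p}$ — consumes at most two additional derivatives at the level of the second-order remainder, hence the $m+4$ loss. The uniformity in $h^L$ and $h^E$ comes directly from the uniform statements already contained in Theorems \ref{Bergdiagexp} and \ref{Toepdiagexp}. The only mildly delicate step is this derivative bookkeeping for the remainder in the multiplicative/inverse expansion; the algebraic identification of the leading-order operator with the Kodaira Laplacian is essentially forced by the Weitzenböck identity, so I expect no real difficulty there.
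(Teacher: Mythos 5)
Your proof follows essentially the same route as the paper's. Both start from the identity $\cB_{E_p}F(x)\,\rho_{E_p}(x)=T_{E_p}(F)(x,x)$ of Proposition \ref{Bpflaprop}, expand both sides via Theorems \ref{Bergdiagexp} and \ref{Toepdiagexp}, identify the $p^{-1}$ coefficient as $b_1(F)-F\,b_1$, observe the cancellation of the scalar-curvature terms and the combination of the $R^E$ terms into a commutator, and finish with the Weitzenböck identity of Proposition \ref{Weitzenfla}; you correctly keep track of the non-commutative order (multiplying by $\rho_{E_p}^{-1}$ on the right). The one place where you are somewhat more expansive than the paper is the derivative bookkeeping behind the $|F|_{\CC^{m+4}}$ remainder; the paper delegates this to the scalar case treated in \cite[Prop.\,3.8]{IKPS19}, so your heuristic count is acceptable but, to be rigorous, one should match the constants and Sobolev/Hölder losses exactly against that reference rather than against the Leibniz-rule sketch.
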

\begin{proof}
Applying Theorems \ref{Bergdiagexp} and \ref{Toepdiagexp}
to Proposition \ref{Bpflaprop},
and following the proof of the analogous result for $E=\C$
in \cite[Prop.\,3.8]{IKPS19}, we deduce that for all $m\in\N$,
there is $C_m>0$ such that for all $f\in\cinf(X,\End(E))$, we have
\begin{equation}\label{KSexpab}
\left|\cB_{E_p}F-F-\frac{1}{p}D_2\,F
\right|_{\CC^m}\leq
p^{-2}C_m|F|_{\CC^{m+4}}\;,
\end{equation}
where $D_2$ is a second order differential operator satisfying
\begin{equation}\label{D2comput}
\begin{split}
D_2\,F&=b_{1}(F)-Fb_1\\
&=\frac{\sqrt{-1}}{4\pi}(\<\om,R^E\>_{g^{TX}}\,F+F\,\<\om,R^E\>_{g^{TX}})-\frac{1}{4\pi}\Delta F-\frac{\sqrt{-1}}{2\pi}
F\<\om,R^E\>_{g^{TX}}\\
&=-\frac{1}{4\pi}\Delta F+
\frac{\sqrt{-1}}{4\pi}(\<\om,R^E\>_{g^{TX}}\,F-F\,\<\om,R^E\>_{g^{TX}})
\,.
\end{split}
\end{equation}
This gives formula \eqref{KSexp} via the Weitzenböck formula
of Proposition \ref{Weitzenfla}.
\end{proof}

Recall the increasing sequence \eqref{lapev}
of eigenvalues of the Laplacian $\Box$, and
for all $j\in\N$, let $e_j\in\cinf(X,\End(E))$ be the normalized
eigensection associated with $\lambda_j^E$,
so that
\begin{equation}
\|e_j\|_{L_2}=1\quad\text{and}\quad\Box\,e_j=\lambda_j^E e_j\,.
\end{equation}
For a function $\Psi:\R\fl\R$ with at most polynomial growth at
$+\infty$, we define the operator $\Psi(\Box)$
acting on $F\in\cinf(X,\End(E))$ by the formula
\begin{equation}\label{calculfct}
\Psi(\Box)F=\sum_{i=0}^{+\infty}
\Psi(\lambda_j)\<F,e_j\>_{L_2}e_j\;.
\end{equation}
Using the functional calculus \eqref{calculfct},
we write $\|\cdot\|_{H^m}$ for the
Sobolev norm of order $m\in\N$, defined for any $f\in\cinf(X,\C)$by
\begin{equation}\label{ellest}
\|F\|_{H^m} := \|\Delta_h^{m/2}F\|_{L_2} + \|F\|_{L_2}\;,
\end{equation}
For any $t>0$, we write
$\exp(-t\Box)$ for the \emph{heat operator} associated
with the Laplacian \eqref{KodLapdef} acting on $\cinf(X,\End(E))$.
For any $m\in\N$, let $\|\cdot\|_{H^m}$ be a
Sobolev norm of order $m$ on $\cinf(X,\End(E))$.

\begin{prop}\label{boundexp}
For any $m\in\N$, there exist $l\in\N$ and a constant $C_m>0$,
uniform in the
$\CC^m$-norm of the derivatives of $h$ and $h^E$ up to order $l$,
such that for any
$F\in\cinf(X,\End(E))$ and all $p\in\N$, we have
\begin{equation}\label{boundexpfla}
\left\|\,\exp\Big(-\frac{\Box}{4\pi p}\Big)\,
F-\cB_p(F)\,\right\|_{H^m}\leq
\frac{C_m}{p}\|F\|_{H^m}\;.
\end{equation}
\end{prop}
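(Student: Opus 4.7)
The plan is to adapt the argument from the scalar case in \cite[\S 3]{IKPS19}, whose key input is the common first-order expansion of $\cB_{E_p}$ and $\exp(-\Box/(4\pi p))$ around the identity operator.

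First, I would exploit the spectral theorem for the positive self-adjoint Kodaira Laplacian $\Box$ to write the Taylor identity with integral remainder
\begin{equation*}
\exp\!\Big(-\frac{\Box}{4\pi p}\Big)=I-\frac{\Box}{4\pi p}+\int_{0}^{1/(4\pi p)}\!\!\Big(\frac{1}{4\pi p}-s\Big)\,\Box^{2}\,e^{-s\Box}\,ds,
\end{equation*}
whose integral remainder applied to $F$ is bounded in $H^m$-norm by $Cp^{-2}\|F\|_{H^{m+4}}$, using contractivity of the heat semigroup on all Sobolev spaces together with the elliptic mapping $\Box^{2}:H^{m+4}\to H^m$. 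Proposition \ref{KSexpprop} gives the analogous expansion for $\cB_{E_p}$, with $C^m$-remainder bounded by $C_m p^{-2}|F|_{C^{m+4}}$; converting $C^k$ into $H^k$ by Sobolev embeddings on the compact manifold $X$ yields an $H^m$-bound of the form $C'_m p^{-2}\|F\|_{H^{m+N}}$ for a fixed integer $N$ depending only on $m$ and $n$. Subtracting the two expansions cancels the common linear term, producing
\begin{equation*}
\|\exp(-\Box/(4\pi p))F-\cB_{E_p}F\|_{H^m}\leq C_m\,p^{-2}\,\|F\|_{H^{m+N}}.
\end{equation*}

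Second, I would upgrade this to the claimed bound with $\|F\|_{H^m}$ on the right at the cost of one factor of $p$, by interpolating against uniform $H^m$-boundedness of both operators. The heat operator is contractive on every Sobolev space by the spectral theorem; uniform $H^m$-boundedness of $\cB_{E_p}$, independent of $p$, follows from the Bergman kernel representation of Proposition \ref{Bpflaprop} combined with the diagonal and off-diagonal asymptotics of Theorem \ref{Bergdiagexp} and \cite{MM12}. A spectral cutoff of $F$ with respect to $\Box$ at level $\Lambda\sim p$ then splits $F$ into a low-frequency part on which the strong bound with derivative loss applies at an acceptable price $\Lambda^{N/2}$, and a high-frequency part on which both operators are small: the heat semigroup by exponential decay, and $\cB_{E_p}$ by the near-commutativity with $\Box$ coming from the full Toeplitz calculus of \cite{MM12}.

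The main technical obstacle is in the high-frequency regime: since $\cB_{E_p}$ does not genuinely commute with $\Box$, the spectral decomposition of $\Box$ does not diagonalize $\cB_{E_p}$, and the cutoff argument must be justified by commutator bounds of the form $\|[\cB_{E_p},\Box^k]\|_{H^m\to H^{m-2k+2}}\lesssim p^{-1}$ uniform in $p$. These can be extracted from the full Bergman and Toeplitz kernel expansions of \cite{MM12}, or equivalently by applying Proposition \ref{KSexpprop} to $\Box^k F$ in place of $F$ and invoking elliptic regularity for $\Box$ to bootstrap the resulting Sobolev estimates.
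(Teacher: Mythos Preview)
Your first step is fine and gives the estimate with derivative loss,
\[
\big\|\exp(-\Box/(4\pi p))F-\cB_{E_p}F\big\|_{H^m}\leq C_m\,p^{-2}\,\|F\|_{H^{m+N}},
\]
with $N>4$ once the $C^{m+4}$ bound from Proposition~\ref{KSexpprop} is converted into Sobolev norms on the compact manifold $X$.

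The second step, however, has a genuine gap: no choice of cutoff level $\Lambda$ makes the two regimes compatible. For the low-frequency part $P_{\leq\Lambda}F$ one has $\|P_{\leq\Lambda}F\|_{H^{m+N}}\leq(1+\Lambda)^{N/2}\|F\|_{H^m}$, so the bound above yields $C p^{-2}\Lambda^{N/2}\|F\|_{H^m}$, and forcing this below $p^{-1}$ requires $\Lambda\lesssim p^{2/N}\ll p$. But on the high-frequency part, the heat operator satisfies $\|e^{-\Box/(4\pi p)}P_{>\Lambda}\|_{H^m\to H^m}=e^{-\Lambda/(4\pi p)}$, which for $\Lambda\ll p$ is $1+o(1)$, not $O(p^{-1})$. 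So the heat side alone already fails on the high-frequency piece, before one even addresses $\cB_{E_p}$. The commutator bounds you propose do not resolve this: even if $[\cB_{E_p},\Box^k]$ were controlled, you would still need $\|\cB_{E_p}P_{>\Lambda}\|_{H^m\to H^m}=O(p^{-1})$ for $\Lambda\lesssim p^{2/N}$, and the expansion $\cB_{E_p}=I-\Box/(4\pi p)+R_p$ gives no such smallness when $\Lambda\ll p$ (the linear term is of size $\Lambda/p\ll 1$, not helping). Extracting what you need from ``the full Bergman and Toeplitz kernel expansions'' is precisely the work you are trying to avoid.

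The paper proceeds differently and more directly: it compares the \emph{Schwartz kernels} of the two operators. The off-diagonal expansion of the Bergman kernel of \cite[Th.\,4.18']{DLM06} gives the kernel of $\cB_{E_p}$ as a Gaussian at scale $p^{-1/2}$ with a full asymptotic expansion in $p^{-1/2}$, and the classical short-time heat kernel expansion \cite[Th.\,2.29]{BGV04} gives the kernel of $\exp(-\Box/(4\pi p))$ in the same form. The leading terms match, and the kernel-level remainder is $O(p^{-1})$ in a way that translates directly into the $H^m\to H^m$ operator bound \eqref{boundexpfla}, with no intermediate loss of derivatives and no cutoff argument. This is the route of \cite[Prop.\,3.9]{IKPS19}, and it is what you should follow here.
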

\begin{proof}
In the same way than in the proof of the analogous
result in \cite[Prop.\,3.9]{IKPS19},
this readily follows from the off-diagonal expansion of the Bergman
kernel of Definition \ref{Bergdef} as $p\fl+\infty$
given in \cite[Th.\,4.18']{DLM06} and
the classical asymptotic expansion of heat kernels for
generalized Laplacians as $t\fl 0$,
which can be found for example in \cite[Th.\,2.29]{BGV04}.
\end{proof}


The following key result, inspired by a strategy of Lebeau and Michel in
\cite{LM10}, allows to control the eigenvalues of $\cB_p$
without controlling its $L_2$-norm.

\begin{prop}\label{hjrefpE}
For any $L>0$ and $m\in\N$, there exist constants $C_{m}>0$ and
$p_m\in\N$,
uniform in the $\CC^m$-norm of the derivatives of
$h$ up to some finite order,
such that for any $p\geq p_m,\,\mu\in\C$ and $F\in\cinf(X,\End(E))$
satisfying
\begin{equation}\label{Sp=mufE}
\cB_{E_p}F=\mu F~~\text{and}~~p|1-\mu|<L\,,
\end{equation}
we have
\begin{equation}\label{hjrefinedE}
\|F\|_{H^m}\leq C_{m}\|F\|_{L_2}\;.
\end{equation}
\end{prop}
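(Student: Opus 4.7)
\medskip\noindent\textbf{Proof plan.}
The plan is to combine the eigenvalue equation $\cB_{E_p}F=\mu F$ with the heat-operator comparison of Proposition \ref{boundexp} in order to force $F$ into a low-frequency spectral subspace of $\Box$, and then exploit ellipticity. Substituting the eigenvalue equation into \eqref{boundexpfla}, one obtains, uniformly in $p$ large enough,
\begin{equation*}
\left\|\left(\exp\!\left(-\frac{\Box}{4\pi p}\right)-\mu\,\Id\right)F\right\|_{H^m}\leq\frac{K_m}{p}\|F\|_{H^m},
\end{equation*}
where $K_m>0$ is the constant from \eqref{boundexpfla}. Expanding $F=\sum_j c_j e_j$ in the orthonormal basis of $\Box$-eigensections with eigenvalues $\lambda_j^E$ and using the functional calculus \eqref{calculfct}, this translates, after squaring and invoking the spectral characterisation $\|G\|_{H^m}^2\asymp\sum_j(1+\lambda_j^E)^m|g_j|^2$ valid via elliptic regularity, into
\begin{equation*}
\sum_j(1+\lambda_j^E)^m|c_j|^2\,\bigl|e^{-\lambda_j^E/(4\pi p)}-\mu\bigr|^2\leq\frac{K_m^2}{p^2}\|F\|_{H^m}^2.
\end{equation*}

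The next step is to establish a high-frequency lower bound: there exist a threshold $\Lambda_0>0$, depending only on $L$ and $K_m$, and an integer $p_m\in\N$, such that for every $\lambda\geq\Lambda_0$, $p\geq p_m$, and every $\mu\in\C$ with $p|1-\mu|\leq L$,
\begin{equation*}
\bigl|e^{-\lambda/(4\pi p)}-\mu\bigr|\geq\frac{\Lambda_0}{16\pi p}.
\end{equation*}
This is verified by splitting into two regimes. For $\lambda\leq\delta p$ with $\delta>0$ a fixed small constant, a Taylor expansion of the exponential combined with $|1-\mu|\leq L/p$ yields
\begin{equation*}
\bigl|e^{-\lambda/(4\pi p)}-\mu\bigr|\geq\frac{\lambda}{4\pi p}-\frac{L}{p}-O\!\left(\frac{\lambda^2}{p^2}\right)\geq\frac{\lambda}{16\pi p},
\end{equation*}
once $\Lambda_0\geq 8\pi L$ and $\delta$ is chosen small enough. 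For $\lambda>\delta p$, the value $e^{-\lambda/(4\pi p)}\leq e^{-\delta/(4\pi)}$ is uniformly bounded away from $1$, hence from $\mu=1+O(1/p)$, by an absolute positive constant, which dominates $\Lambda_0/(16\pi p)$ for $p\geq p_m$.

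Third, decompose $F=F_{<}+F_{>}$ using the spectral projectors of $\Box$ associated respectively with $[0,\Lambda_0)$ and $[\Lambda_0,+\infty)$. The low-frequency piece satisfies the trivial bound $\|F_{<}\|_{H^m}^2\leq(1+\Lambda_0)^m\|F\|_{L_2}^2$, while combining the two displays above gives
\begin{equation*}
\left(\frac{\Lambda_0}{16\pi p}\right)^{\!2}\|F_{>}\|_{H^m}^2\leq\frac{K_m^2}{p^2}\|F\|_{H^m}^2.
\end{equation*}
Choosing $\Lambda_0\geq 16\pi K_m\sqrt{2}$, one obtains $\|F_{>}\|_{H^m}^2\leq\tfrac12\|F\|_{H^m}^2$, which absorbs into the orthogonal decomposition $\|F\|_{H^m}^2=\|F_{<}\|_{H^m}^2+\|F_{>}\|_{H^m}^2$ to yield the desired estimate $\|F\|_{H^m}\leq C_m\|F\|_{L_2}$ with $C_m=\sqrt{2(1+\Lambda_0)^m}$, independent of $p$.

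The main obstacle is Step 2: one must choose $\Lambda_0$ independent of $p$ while ensuring that $\mu$, which is only pinned down to an $O(1/p)$ neighborhood of $1$, remains separated from the spectrum of the heat semigroup $\exp(-\Box/(4\pi p))$ above $\lambda=\Lambda_0$ by at least a multiple of $1/p$. The scaling $p|1-\mu|\leq L$ is precisely what makes this work: it forces $\mu$ to approach the top of the spectrum at a rate commensurate with the Taylor expansion of the heat symbol, so that only eigenvalues $\lambda_j^E=O(1)$ can genuinely resonate with $\mu$, while all higher frequencies are uniformly damped in $p$.
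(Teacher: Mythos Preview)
Your argument is correct and rests on the same ingredients as the paper's: Proposition~\ref{boundexp} and the functional calculus of~$\Box$. The paper, following the proof of Proposition~\ref{hjrefp'} (and \cite{IKPS19,LM10}), rewrites the eigenvalue equation as
\[
p\bigl(e^{-\Box/4\pi p}-\cB_{E_p}\bigr)F=p(1-\mu)F-\tfrac{\Box}{4\pi}\,\Phi(\Box/4\pi p)F,
\qquad \Phi(s)=\tfrac{1-e^{-s}}{s},
\]
bounds $\|\Box\,\Phi(\Box/4\pi p)F\|_{H^m}$ by $C\|F\|_{H^m}$ using~\eqref{boundexpfla} and $p|1-\mu|<L$, invokes a pointwise lower bound on $\Phi$ to deduce $\|F\|_{H^{m+2}}\leq C\|F\|_{H^m}$, and then inducts on~$m$. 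You replace this bootstrap by a single spectral cutoff at a fixed threshold~$\Lambda_0$, absorbing the high-frequency tail in one step. The two are equivalent repackagings of the same Lebeau--Michel mechanism; your version is more explicit about the competing scales, at the cost of having to track the equivalence constants between the ambient $H^m$ norm and the spectral Sobolev norm of~$\Box$ (which you hide in the symbol~$\asymp$), and of choosing~$\Lambda_0$ to satisfy two constraints simultaneously (large relative to~$L$ for the Taylor regime, large relative to~$K_m$ for the absorption).
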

\begin{proof}
Using Propositions \ref{KSexpprop} and \ref{boundexp}, the result follows
from a straightforward extension of the proof of
\cite[Th.\,3.1,\,\S\,3.5]{IKPS19}.
\end{proof}

\subsection{Kähler-Einstein case and proof of Theorem \ref{mainthvb}}
\label{BTKEsec}

We now consider the
scalar case $E=\C$, and when
the Kähler metric $g^{TX}$ defined by equation \eqref{gTX}
is \emph{K\"ahler-Einstein}, so that there exists a constant
$c\in\R$ such that
\begin{equation}\label{KEdef}
\textup{Ric}(\om)=c\om\,,
\end{equation}
where the \emph{Ricci form} $\textup{Ric}(\om)\in\Om^2(X,\R)$
is defined via formula \eqref{Ricdef}.
We also assume that $\nu_X$ is the
Liouville measure.
Recall that in the scalar case $E=\C$, the Laplacian \eqref{KodLapdef}
coincides with the usual Laplace-Beltrami operator $\Delta$ of $(X,g^{TX})$
acting on $\cinf(X,\C)$.
The first step of the proof of the refined estimate
\ref{mainthKE} is the following
refinement of
the Karabegov-Schlichenmaier formula of Proposition \ref{KSexpprop}.

\begin{prop}\label{KS_KE}
For any $m\in\N$, there exists $C_m>0$ such that for any
$f\in\cinf(X,\C)$ and all $p\in\N$, we have
\begin{equation}\label{KSexp_KE}
\left|\,\cB_pf-\left(1-\frac{\Delta}{4\pi p}
+\frac{\Delta^2}{32\pi^2p^2}+c\frac{\Delta}{16\pi^2p^2}\right)f
\,\right|_{\CC^m}\leq
\frac{C_m}{p^3}|f|_{\CC^{m+4}}\;.
\end{equation}
\end{prop}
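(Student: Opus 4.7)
The strategy is to push the proof of Proposition~\ref{KSexpprop} one order further in $p^{-1}$ by exploiting the third-order terms in the pointwise diagonal expansions of the Bergman and Toeplitz kernels provided by Theorems~\ref{Bergdiagexp} and~\ref{Toepdiagexp}, and then to use the K\"ahler-Einstein hypothesis to simplify the resulting expression. The scalar case of Proposition~\ref{Bpflaprop} gives the pointwise identity $\cB_p f(x)\rho_p(x)=T_p(f)(x,x)$, so my first step would be to divide the order-three expansion of $T_p(f)(x,x)/p^n$ from Theorem~\ref{Toepdiagexp} by the corresponding expansion of $\rho_p(x)/p^n$ from Theorem~\ref{Bergdiagexp}. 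Inverting the latter as a geometric series and bounding remainders in $C^m$-norm (uniformly in $f$) yields
\begin{equation*}
\cB_p f=f+\frac{b_1(f)-f\,b_1}{p}+\frac{b_2(f)-f\,b_2+f\,b_1^{2}-b_1(f)\,b_1}{p^{2}}+O(p^{-3}),
\end{equation*}
with a remainder controlled by $C_m\,p^{-3}|f|_{C^{m+4}}$, as in the proof of Proposition~\ref{KSexpprop}.

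Next, substituting $b_1=\scal(\om)/(8\pi)$ and $b_1(f)=\scal(\om)f/(8\pi)-\Delta f/(4\pi)$ from \eqref{Bergcoeff} and \eqref{Toepcoeff}, the $p^{-1}$-coefficient immediately simplifies to $-\Delta f/(4\pi)$. For the $p^{-2}$-coefficient, a short algebraic manipulation using these same formulas gives
\begin{equation*}
f\,b_1^{2}-b_1(f)\,b_1=\frac{\scal(\om)}{32\pi^{2}}\,\Delta f,
\end{equation*}
which cancels exactly the term $-\scal(\om)\Delta f/(32\pi^{2})$ appearing in the explicit formula for $b_2(f)$ given in the scalar case of Theorem~\ref{Toepdiagexp}. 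What remains of the $p^{-2}$-coefficient is therefore
\begin{equation*}
\frac{\Delta^{2}f}{32\pi^{2}}-\frac{\sqrt{-1}}{8\pi^{2}}\,\langle\Ric(\om),\partial\dbar f\rangle_{g^{TX}}.
\end{equation*}

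The final step invokes the K\"ahler-Einstein condition $\Ric(\om)=c\,\om$, which converts the pairing into $c\,\langle\om,\partial\dbar f\rangle_{g^{TX}}$, and the standard K\"ahler identity $\langle\om,\partial\dbar f\rangle_{g^{TX}}=\tfrac{\sqrt{-1}}{2}\Delta f$, compatible with the non-negative sign convention \eqref{lapevgap} for $\Delta$, to rewrite this contribution as $c\,\Delta f/(16\pi^{2})$. Collecting the $p^{-1}$ and $p^{-2}$ contributions then produces exactly the expansion stated in \eqref{KSexp_KE}. The only mildly delicate point is to ensure that the geometric-series inversion and the algebraic manipulations preserve the $C^m$-control with remainder of order $p^{-3}|f|_{C^{m+4}}$; this is handled exactly as in \cite[Prop.\,3.8]{IKPS19} and in the proof of Proposition~\ref{KSexpprop}, using the uniform positivity of the leading Bergman coefficient $b_0=1$ together with the fact that the $C^m$-remainders in Theorems~\ref{Bergdiagexp} and~\ref{Toepdiagexp} are uniform in the $C^{m+4}$-norm of $f$.
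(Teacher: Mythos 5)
Your proposal is correct and follows essentially the same route as the paper: you start from Proposition~\ref{Bpflaprop}, divide the Toeplitz-kernel expansion of Theorem~\ref{Toepdiagexp} by the Bergman-kernel expansion of Theorem~\ref{Bergdiagexp} (which the paper compresses into the phrase "following the proof of \cite[Prop.\,3.8]{IKPS19}"), collect the $p^{-1}$ and $p^{-2}$ coefficients to get $D_2 f=-\Delta f/(4\pi)$ and $D_4 f=\Delta^2 f/(32\pi^2)-\tfrac{\sqrt{-1}}{8\pi^2}\<\Ric(\om),\partial\dbar f\>_{g^{TX}}$, and then invoke the K\"ahler--Einstein condition together with the identity $\sqrt{-1}\<\om,\partial\dbar f\>_{g^{TX}}=-\tfrac{1}{2}\Delta f$. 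Your intermediate cancellation $f b_1^2-b_1(f)b_1=\tfrac{\scal(\om)}{32\pi^2}\Delta f$ and the resulting simplification of $D_4$ match the paper's formulas \eqref{D4pre}--\eqref{omddbar=delta} exactly.
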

\begin{proof}
Recall from Proposition \ref{Bpflaprop} that for any $x\in X$ and $p\in\N$
big enough, we have
\begin{equation}\label{Bpdiagscal}
\cB_{p}f(x)=\frac{T_{p}(f)(x,x)}{\rho_{p}(x)}\,.
\end{equation}
Thus by Theorems \ref{Bergdiagexp} and \ref{Toepdiagexp}
as in the proof of Proposition \ref{KSexpprop},
we know that for all $m\in\N$,
there is $C_m>0$ such that
\begin{equation}\label{KSexpab_KE}
\left|\cB_pf-\left(1+\frac{D_2}{p}+\frac{D_4}{p^2}\right) f
\right|_{\CC^m}\leq
\frac{C_m}{p^3}|f|_{\CC^{m+6}}\;,
\end{equation}
where the differential operator $D_2$
has been computed in equation \eqref{D2comput} for $E=\C$
to satisfy
\begin{equation}\label{D2}
D_2\,f=-\frac{\Delta}{4\pi p}f\,.
\end{equation}
while the differential operator $D_4$ satisfies
\begin{equation}\label{D4pre}
\begin{split}
D_4\,f&=b_{2}(f)-b_2 f-b_1(b_{1}(f)-b_1 f)\\
&=\frac{\Delta^2}{32\pi^2}f
-\frac{\scal(\om)}{32\pi^2}\Delta f-\frac{\sqrt{-1}}{8\pi^2}
\<\Ric(\om),\partial\dbar f\>_{g^{TX}}+\frac{\scal(\om)}{8\pi}
\frac{\Delta f}{4\pi}\\
&=\frac{\Delta^2}{32\pi^2} f-\frac{\sqrt{-1}}{8\pi^2}
\<\Ric(\om),\partial\dbar f\>_{g^{TX}}\,.
\end{split}
\end{equation}
Now by definition \eqref{KEdef}
of a Kähler-Einstein metric
and of the holomorphic Laplacian of $(X,g^{TX},J)$, which is half
the Laplace-Beltrami operator $\Delta$, we know that
\begin{equation}\label{omddbar=delta}
\sqrt{-1}\<\Ric(\om),\partial\dbar f\>_{g^{TX}}=
c\sqrt{-1}\<\om,\partial\dbar f\>_{g^{TX}}=-\frac{c}{2}\Delta f\,.
\end{equation}
This gives the result.
\end{proof}

The crucial point in the asymptotic expansion \eqref{KSexp_KE}
of the Berezin transform is the fact
that it is a polynomial in the Laplace-Beltrami
operator $\Delta$.
This is a consequence of the Kähler-Einstein hypothesis, and
is at the core of the following proof of Theorem \ref{mainthvb},
which is a refinement of \cite[\S\,3.5]{IKPS19}.

\medskip\noindent{\bf Proof of Theorem \ref{mainthvb}.}
Let us start with the more involved estimate \eqref{mainthKE}.
Recall the $L^2$-norm $\|\cdot\|_{L_2}$
on $\cinf(X,\C)$
induced by the Riemannian measure of $g^{TX}$.
By Proposition \ref{KS_KE} and by the classical
Sobolev embedding theorem,
for any $f\in\cinf(X,\C)$ we get
\begin{equation}\label{KSSob}
\left\|p(1-\cB_p)f-a_p(\Delta)
f\right\|_{L_2}
\leq Cp^{-2}\|f\|_{H^{m}}\;,
\end{equation}
for some $m\in\N$ large enough, where $a_p\in\C[x]$ is a
polynomial defined for all
$p\in\N$ by
\begin{equation}\label{apl}
a_p(x)=\frac{x}{4\pi}
-\frac{x^2+2cx}{32\pi^2p}\,.
\end{equation}
Recall now that by Definition
\ref{BpdefE} with $E=\C$, the Berezin transform $\cB_p$ is
a self-adjoint with respect to the $L_2$-scalar product \eqref{L2alphaE}.
Writing $\|\cdot\|_{W_p}$ for the associated $L_2$-norm,
Theorem \ref{Bergdiagexp} shows that there exists a constant $C>0$
such that
\begin{equation}\label{WpL2eq}
\left(1-\frac{C}{p}\right)\|\cdot\|_{L^2}\leq
\|\cdot\|_{W_p}
\leq \left(1+\frac{C}{p}\right)\|\cdot\|_{L^2}\,.
\end{equation}
This shows in particular that equation \eqref{KSSob} also holds
in the norm $\|\cdot\|_{W_p}$.
Letting now $j\in\N$ be fixed and $e_j\in\cinf(X,\C)$ satisfy
$\Delta e_j=\lambda_j e_j$ and $\|e_j\|_{L_2}=1$,
estimate \eqref{KSSob} reads
\begin{equation}\label{estevdelt}
\left\|p(1-\cB_p)e_j-a_p(\lambda_j)e_j\right\|
_{W_p}\leq C p^{-2}\;.
\end{equation}
Thus if $m_j\in\N$ is the multiplicity of $\lambda_j$ as an
eigenvalue of $\Delta$, the estimate \eqref{estevdelt} for all
eigenfunctions of $\Delta$ associated with $\lambda_j$
gives a constant $C_j>0$ such that for all $p\in\N$,
\begin{equation}\label{spec>}
\#\left(\Spec\big(p(1-\cB_p)\big)\cap\left[a_p(\lambda_j)-C_jp^{-2},a_p(\lambda_j)+C_jp^{-2}\right]\right)
\geq m_j\;.
\end{equation}
This immediately follows from the variational principle applied to
the compact operator $p(1-\cB_p)$.

Conversely, fix $L>0$ and let $\{f_p\}_{p\in\N}$ be
the sequence of normalized eigenfunctions considered
in Proposition \ref{hjrefpE} for $E=\C$. Then by
\eqref{KSSob}, we get $C>0$ such that
\begin{equation}\label{specinv>}
\left\|p(1-\mu_p)f_p-a_p(\Delta)
f_p\right\|_{L_2}\leq Cp^{-2}\;.
\end{equation}
In particular, we get that
\begin{equation}\label{eq-spec-est}
\textup{dist}\left(p(1-\mu_p),\Spec\,a_p(\Delta)
\right)\leq Cp^{-2}\;,
\end{equation}
showing that all eigenvalues of $p(1-\cB_p)$ bounded by some
$L>0$ have to be included in the left hand side of \eqref{spec>}.

Let us finally show that \eqref{spec>} is an equality
for $p\in\N$ big enough. Let $l\in\N$ with $l\geq m_j$
be such that for all $p\in\N$, there exists
an orthonormal family $\{f_{k,p}\}_{1\leq k\leq l}$
of eigenfunctions of $\cB_p$ for $\|\cdot\|_{W_p}$ with associated
eigenvalues $\{\mu_{k,p}\in\R\}_{1\leq k\leq l}$
satisfying
\begin{equation}
p(1-\mu_{k,p})\in[a_p(\lambda_j)-Cp^{-2},a_p(\lambda_j)+Cp^{-2}]\;,~~
\text{for all}~~1\leq k\leq l\;.
\end{equation}
As the inclusion of the Sobolev space $H^q$ in $H^{q-1}$ is compact,
by Proposition \ref{hjrefpE} and \eqref{WpL2eq},
there exists a subsequence of $\{f_{k,p}\}_{p\in\N}$ converging
to a function $f_k$ in $H^{q-1}$-norm,
for all $1\leq k\leq l$.
In particular, taking $q>2$ and using \eqref{WpL2eq} again,
the family $\{f_k\}_{1\leq k\leq l}$
is orthonormal in $L_2(X,\C)$
and satisfies $\Delta f_k=\lambda_j f_k$ for all $1\leq k\leq l$ by
\eqref{specinv>}. By definition of the multiplicity $m_j\in\N$
of $\lambda_j$ and as $a_p:\R\fl\R$ is strictly increasing over $[0,L]$
for all $p\in\N$ big enough, this forces $l=m_j$. We thus get
\begin{equation}\label{spec=}
\#\left(\Spec\big(p(1-\cB_p)\big)\cap
\left[a_p(\lambda_j)-Cp^{-2},a_p(\lambda_j)+Cp^{-2}\right]\right)
=m_j\;.
\end{equation}
Using again the fact that $a_p:\R\fl\R$ is increasing over $[0,L]$
for all $p\in\N$ big enough, so that the respective order of the eigenvalues
\eqref{berev} and \eqref{lapev} is respected, this establishes
the estimate \eqref{mainthKE}.
%
On the other hand, the estimate
\eqref{eq-LB} follows from Propositions
\ref{KSexpprop} and \ref{hjrefpE} in the same way.

\qed

\section{Balanced metrics}
\label{balsec}

The goal
of this Section is to establish Theorem \ref{gapvb} on the
exponential convergence of the iterations of the dynamical systems
\eqref{Tdefcan} and \eqref{DonitflaEintro} towards the respective
notions of balanced metrics, due to Donaldson in \cite{Don01}
in the scalar case and Wang in \cite{Wan05} in the vector bundle case.
We then establish in Section \ref{momsec} remarkable identities
relating the dynamical systems \eqref{DonitflaEintro} and \eqref{Tdefcan}
with the \emph{moment map} for balanced
embeddings used in \cite{Wan05} and \cite{Don01}, recovering the
lower bounds which played a crucial
role in the proofs of the celebrated results of Wang \cite{Wan05}
and Donaldson \cite{Don01} on balanced embeddings.

\subsection{Vector bundle case}
\label{vbcase}

Consider now $E$ as a holomorphic vector bundle over $(X,J)$, and
fix a smooth volume form $d\nu$ over $X$.
For any $p\in\N$, we write $\HH_p$ for the space
of holomorphic sections of $E_p=E\otimes L^p$.
Let $\Met(E_p)$ be the space
of Hermitian metrics on $E_p$ and $\Prod(\HH_p)$ the
space of inner Hermitian products on $\HH_p$.
Then the \emph{Hilbert map} of $E_p$ associated with $\nu$ is defined by
\begin{equation}\label{HilbEfla}
\begin{split}
\Hilb_{E_p}:\Met(E_p)&\longrightarrow\Prod(\HH_p)\\
h^{E_p}\,&\longmapsto
\frac{\dim\HH_p}{\Vol(X,\nu)\rk(E)}\,\int_X\,h^{E_p}(\cdot,\cdot)\,d\nu
\,,
\end{split}
\end{equation}
Recall by Kodaira's vanishing theorem that there exists $p_0\in\N$
such that for all $p\geq p_0$, the evaluation map \eqref{evE}
associated with $E_p=E\otimes L^p$
is surjective for all $x\in X$. For
any $p\geq p_0$, the \emph{Fubini-Study map} associated with
$E_p$ is the map
\begin{equation}\label{FSdef}
\FS:\Prod(\HH_p)\longrightarrow\Met(E_p)\,,
\end{equation}
sending $q\in\Prod(\HH_p)$ to the Hermitian metric 
$\FS(q)\in\Met(E_p)$ defined for any $s_1,\,s_2\in \HH_p$ by
\begin{equation}\label{hFSdef}
\FS(q)(s_1(x),s_2(x)):=q\left(\Pi_q(x)\,s_1,\,s_2\right)\,,
\end{equation}
where $\Pi_q(x)\in\Herm(\HH_p,q)$ is the unique orthogonal
projector with respect to $q$ satisfying
\begin{equation}\label{KerPiq}
\Ker\Pi_q(x)=\{\,s\in \HH_p~|~s(x)=0\,\}\,.
\end{equation}

\begin{defi}\label{nubaldef}
\emph{Donaldson's map} associated with $E$ and $\nu$ is defined by
\begin{equation}\label{Tnudef}
\TT_{E_p}:=\Hilb_{E_p}\circ\,\FS:\Prod(\HH_p)\longrightarrow
\Prod(\HH_p)\,.
\end{equation}
A Hermitian product $q\in\Prod(\HH_p)$ is called
\emph{$\nu$-balanced} if
\begin{equation}
\TT_{E_p}(q)=q\,.
\end{equation}
\end{defi}

Note that for any $q\in\Prod(\HH_p)$,
the metric $\FS(q)$ tautologically coincides with
the pullback $h^{FS}_q$ of the metric
induced by $q$ via the Kodaira embedding
$X\hookrightarrow\mathbb{G}(\rk(E),H^0(X,E_p)^*)$.
We thus recover the explicit
description \eqref{DonitflaEintro} for
Donaldson's map $\TT_{E_p}$.

The following Lemma, which is essentially a reformulation of
the language of \cite{Don09},
gives a first link between $\nu$-balanced
products and Berezin-Toeplitz quantization.

\begin{lem}\label{REp=1lem}
Let $q\in\Prod(\HH_p)$ be a $\nu$-balanced product,
and consider the setting of Section \ref{BTvbsec} with
$h^{E_p}:=\FS(q)$.
Then  the $L_2$-Hermitian product \eqref{L2E} satisfies
\begin{equation}\label{L2=q}
\frac{\dim\HH_p}{\Vol(X,\nu)\rk(E)}\<\cdot,\cdot\>_p=q\,,
\end{equation}
and the Rawnsley section of Definition \ref{Rawnsect} satisfies
\begin{equation}\label{REp=1}
\rho_{E_p}=\frac{\dim\HH_p}{\Vol(X,\nu)\rk(E)}\Id_{E}\,.
\end{equation}
\end{lem}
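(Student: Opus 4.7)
The plan is to unfold definitions and exploit the $\nu$-balanced condition twice. Set $c:=\dim\HH_p/(\Vol(X,\nu)\rk(E))$ throughout. For identity \eqref{L2=q}, observe that with $h^{E_p}=\FS(q)$ and $\nu_X=\nu$, the definition \eqref{L2E} of the $L_2$-Hermitian product and the definition \eqref{HilbEfla} of the Hilbert map combine tautologically into
\begin{equation*}
\Hilb_{E_p}(\FS(q))=c\,\<\cdot,\cdot\>_p\,.
\end{equation*}
By Definition \ref{nubaldef}, the $\nu$-balanced hypothesis reads $\Hilb_{E_p}(\FS(q))=\TT_{E_p}(q)=q$, and the two identities combine to give \eqref{L2=q}.

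For identity \eqref{REp=1}, the plan is first to express the coherent state projector \eqref{KerPiq} in terms of $\rho_{E_p}$. Since $q$ and $\<\cdot,\cdot\>_p$ are proportional by \eqref{L2=q}, the $q$-orthogonal complement of $\Ker\ev_{E_x}$ inside $\HH_p$ coincides with the $\<\cdot,\cdot\>_p$-orthogonal one, so $\Pi_q(x)$ is equivalently the $\<\cdot,\cdot\>_p$-orthogonal projector onto $\im(\ev_{E_x}^*)$. A direct computation using the self-adjointness of $\rho_{E_p}(x)=\ev_{E_x}\ev_{E_x}^*$ with respect to $h^{E_p}$ then yields
\begin{equation*}
\Pi_q(x)=\ev_{E_x}^*\,\rho_{E_p}(x)^{-1}\,\ev_{E_x}\,,
\end{equation*}
where invertibility of $\rho_{E_p}(x)$ follows from surjectivity of $\ev_{E_x}$ for $p$ large. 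Substituting this into the Fubini-Study formula \eqref{hFSdef} and applying \eqref{L2=q} to rewrite $q$ as $c\<\cdot,\cdot\>_p$, for every $s\in\HH_p$ and $x\in X$ we obtain
\begin{equation*}
\FS(q)(s(x),s(x))=c\,\<\ev_{E_x}^*\rho_{E_p}(x)^{-1}\ev_{E_x}s,\,s\>_p
=c\,h^{E_p}(\rho_{E_p}(x)^{-1}s(x),s(x))\,.
\end{equation*}

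Since $h^{E_p}=\FS(q)$ and $\ev_{E_x}$ is surjective, the latter identity reads $h^{E_p}(v,v)=c\,h^{E_p}(\rho_{E_p}(x)^{-1}v,v)$ for every $v\in E_{p,x}$. Because $\rho_{E_p}(x)$ is self-adjoint for $h^{E_p}$, polarization forces $\rho_{E_p}(x)=c\,\Id_{E_{p,x}}$, which under the canonical identification $\End(E_p)\simeq\End(E)$ (valid since $L^p$ is a line bundle) is precisely \eqref{REp=1}. No real obstacle is expected: the argument is essentially an unfolding of definitions, the only mild care being to keep track of which inner product on $\HH_p$ is used at each step and to exploit the insensitivity of orthogonal complements to positive rescaling of the ambient Hermitian form.
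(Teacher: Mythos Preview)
Your proof is correct and follows essentially the same approach as the paper: both derive \eqref{L2=q} by unfolding $\TT_{E_p}(q)=q$, then exploit the proportionality of $q$ and $\<\cdot,\cdot\>_p$ together with the defining relation \eqref{hFSdef} and surjectivity of $\ev_{E_x}$ to pin down $\rho_{E_p}$. The only cosmetic difference is that the paper avoids introducing $\rho_{E_p}(x)^{-1}$ by deriving $\ev_{E_x}^*\ev_{E_x}=c\,\Pi_q(x)$ directly from \eqref{hFSdef} and then composing with $\ev_{E_x}$ on the right, whereas you pass through the explicit pseudoinverse formula $\Pi_q(x)=\ev_{E_x}^*\rho_{E_p}(x)^{-1}\ev_{E_x}$; both routes are equivalent and equally short.
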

\begin{proof}
First, the identity \eqref{L2=q} is a straightforward consequence
of Definition \ref{nubaldef}. Then using \eqref{L2=q},
for any $s_1,\,s_2\in\HH_p$ and $x\in X$, we have
\begin{equation}
\begin{split}
\frac{\dim\HH_p}{\Vol(X,\nu)\rk(E)}\,q(\Pi_q(x)\,s_1,\,s_2)
&=\frac{\dim\HH_p}{\Vol(X,\nu)\rk(E)}\,\FS(q)(\ev_x^E.s_1,\ev_x^E.s_2)\\
&=\frac{\dim\HH_p}{\Vol(X,\nu)\rk(E)}\,
\left\langle\left(\ev_x^E\right)^*\ev_x^E.s_1,s_2\right\rangle_p\\
&=q\left(\left(\ev_x^E\right)^*\ev_x^E\,s_1,\,s_2\right)\,,
\end{split}
\end{equation}
which shows that
\begin{equation}\label{Piq=ev*ev}
\ev_{E_x}^*\ev_{E_x}=\frac{\dim\HH_p}{\Vol(X,\nu)\rk(E)}\Pi_q(x)\,.
\end{equation}
The identity \eqref{REp=1}
follows by applying $\ev_x^E$ on the right of \eqref{Piq=ev*ev},
using the tautological formula
$\ev_x^E\Pi_q(x)=\ev_{E_x}$ and the surjectivity of
$ev_x^E:\HH_p\fl E_{p,x}$.
\end{proof}


Note that the first equality of formula \eqref{npfla} shows that,
if the Rawnsley section is equal to a constant
scalar, then it has to be given by formula \eqref{REp=1}.
The proof of Lemma \ref{REp=1lem} then shows that if the identity
\eqref{L2=q} holds up to a multiplicative constant, then it has to hold
exactly. The normalizing
factor in front of the integral in the definition \eqref{Hilbdef}
of the Hilbert product is thus necessary for fixed points of
Donaldson's map \eqref{Tnudef} to exist.

Let $q\in\Prod(\HH_p)$ be a $\nu$-balanced product,
so that in particular \eqref{L2=q} holds, and consider the natural identifications
\begin{equation}\label{Metid}
\begin{split}
\cinf(X,\Herm(E))&\xrightarrow{~\sim~}T_{FS(q)}\Met(E_p)\\
F~&\longmapsto~\dt\Big|_{t=0}\,FS(q)(e^{tF}\cdot,\cdot)
\end{split}
\end{equation}
and
\begin{equation}\label{Prodid}
\begin{split}
\Herm(\HH_p)&\xrightarrow{~\sim~}T_q\Prod(\HH_p))\\
A~&\longmapsto~\dt\Big|_{t=0}\,\<e^{tA}\cdot,\cdot\>_p\,.
\end{split}
\end{equation}
The following result is essentially a reformulation of the language
of \cite{Don09}.

\begin{prop}\label{dFSHilb}
Let $q\in\Prod(\HH_p)$ be a $\nu$-balanced product,
and consider the setting of Section \ref{BTvbsec} with
$h^{E_p}:=\FS(q)$.
Then the differential of the
Hilbert map \eqref{HilbEfla} at $\FS(q)\in\Met(E_p)$ is given by
\begin{equation}\label{dHilbE}
\begin{split}
D_{\FS(q)}\Hilb_{E_p}:\cinf(X,\Herm(E_p))&\longrightarrow
\Herm(\HH_p)\\
F~&\longmapsto\,T_{E_p}(F)\,,
\end{split}
\end{equation}
and the differential of the
Fubini-Study map of \eqref{FSdef} at $q\in\Prod(\HH_p)$
is given by
\begin{equation}\label{dFSfla}
\begin{split}
D_q\FS:\Herm(\HH_p)&\longrightarrow
\cinf(X,\Herm(E_p))\\
A~&\longmapsto\,T^*_{E_p}(A)\,.
\end{split}
\end{equation}
\end{prop}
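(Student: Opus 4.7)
\medskip\noindent\textbf{Proof plan.}
The plan is to differentiate both defining formulas directly at the $\nu$-balanced point $q$ and identify the results using Lemma \ref{REp=1lem}. Throughout, set $c := \dim\HH_p/(\Vol(X,\nu)\rk(E))$, so that at the $\nu$-balanced $q$ the proportionalities $q = c\langle\cdot,\cdot\rangle_p$, $\rho_{E_p} = c\,\Id_E$, and $\ev_{E_x}^*\ev_{E_x} = c\,\Pi_q(x)$ (from \eqref{Piq=ev*ev}) all hold simultaneously, which is what makes the computation tractable.

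For \eqref{dHilbE}, I would differentiate $\Hilb_{E_p}(\FS(q)(e^{tF}\cdot,\cdot))$ at $t=0$, exploiting the linearity of $\Hilb_{E_p}$ in its argument. The derivative is the Hermitian form sending $(s_1,s_2)\in\HH_p^2$ to $c\int_X \FS(q)(F(x)s_1(x),s_2(x))\,d\nu_X(x) = c\langle F s_1, s_2\rangle_p$ via $\FS(q)=h^{E_p}$. Since $s_1,s_2$ are holomorphic, Proposition \ref{BTquantfla} rewrites this as $c\langle T_{E_p}(F)s_1,s_2\rangle_p = q(T_{E_p}(F)s_1,s_2)$, which under \eqref{Prodid} corresponds to $T_{E_p}(F)\in\Herm(\HH_p)$.

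For \eqref{dFSfla}, I would take the path $q_t := q(e^{tA}\cdot,\cdot)$ and differentiate the identity $\FS(q_t)(s_1(x),s_2(x)) = q_t(\Pi_{q_t}(x)s_1,s_2)$ at $t=0$. The technical core is the variation of the projector: its kernel $K := \{s\in\HH_p : s(x)=0\}$ is independent of $t$, but its image $K^{\perp_{q_t}} = e^{-tA}K^{\perp_q}$ varies, and a direct decomposition argument using this description gives $\dt\big|_{t=0}\Pi_{q_t}(x) = -(1-\Pi_q(x))A\,\Pi_q(x)$. Combining this with the explicit $t$-dependence of $q_t$ and using the $q$-self-adjointness of $\Pi_q(x)$ yields
\begin{equation*}
\dt\Big|_{t=0}\FS(q_t)(s_1(x),s_2(x)) = q(\Pi_q(x)\,A\,\Pi_q(x)s_1,s_2).
\end{equation*}
Substituting $\Pi_q(x)s = c^{-1}\ev_{E_x}^*s(x)$ and $q = c\langle\cdot,\cdot\rangle_p$, this simplifies to $h^{E_p}(c^{-1}\ev_{E_x}A\ev_{E_x}^*s_1(x),s_2(x))$. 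Finally, the duality defining $T^*_{E_p}$ under the pairing \eqref{L2alphaE}, combined with $\rho_{E_p} = c\,\Id_E$ and the formula for $T_{E_p}$ from Definition \ref{BTquantdefE}, forces $c^{-1}\ev_{E_x}A\ev_{E_x}^* = T^*_{E_p}(A)(x)$. Through \eqref{Metid}, this identifies $D_q\FS(A)$ with $T^*_{E_p}(A)$.

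The principal obstacle is the computation of $\dt|_0\Pi_{q_t}(x)$, which requires carefully tracking how the image of the projector moves even though its kernel is held fixed; the self-adjointness of $\Pi_q(x)$ with respect to $q$ is what allows the cross terms to collapse into the symmetric sandwich $\Pi_q A\Pi_q$. Once this variation is in hand, Lemma \ref{REp=1lem} reduces everything at the $\nu$-balanced point to scalar multiplications, and matching the resulting expressions with $T_{E_p}$ and $T^*_{E_p}$ is then essentially bookkeeping.
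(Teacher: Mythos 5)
Your proposal is correct and follows essentially the same route as the paper: the computation of $D_{\FS(q)}\Hilb_{E_p}$ is identical, and for $D_q\FS$ both arguments exploit that $\Ker\Pi_{q_t}(x)$ is $t$-independent while only the image moves, leading to the key identity $\dt|_{t=0}\FS(q_t)(s_1(x),s_2(x)) = q(\Pi_q(x)A\Pi_q(x)s_1,s_2)$ before invoking Lemma \ref{REp=1lem} and duality. The only (cosmetic) difference is that you derive the explicit formula $\dt|_{t=0}\Pi_{q_t}(x)=-(1-\Pi_q(x))A\Pi_q(x)$ from the moving image $e^{-tA}K^{\perp_q}$, whereas the paper differentiates the idempotence $\Pi_{q_t}^2=\Pi_{q_t}$ and only extracts $\Pi_q\dot\Pi\Pi_q=0$; both yield the same conclusion, and your normalization $c^{-1}\ev_{E_x}A\ev_{E_x}^*=T^*_{E_p}(A)(x)$ is the correct one.
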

\begin{proof}
For any $F\in\cinf(X,\Herm(E_p))$ and $t\in\R$, set
\begin{equation}
h_t^{E_p}:=h^{E_p}(e^{tF}\cdot,\cdot)\in\Met(E_p)\,.
\end{equation}
Then for any $s_1,\,s_2\in\HH_p$ and using Proposition \ref{BTquantfla}, we have
\begin{equation}
\begin{split}
\dt\Big|_{t=0}\Hilb_{E_p}(h^{E_p}_t)(s_1,s_2)
&=\frac{\dim\HH_p}{\Vol(X,\nu)\rk(E)}\int_X\,h^{E_p}(F(x)\,s_1(x),s_2(x))\,
d\nu(x)\\
&=q\left(T_{E_p}(F)\,s_1,s_2\right)\,,
\end{split}
\end{equation}
which shows \eqref{dHilbE}.

Let us now show \eqref{dFSfla}. For any $A\in\Herm(\HH_p)$ and
$t\in\R$, set
\begin{equation}
q_t:=q(e^{tA}\cdot,\cdot)\,.
\end{equation}
Let us first show that
\begin{equation}\label{DHTpointfla}
\D{t}\Big|_{t=0}
q_t(\Pi_{q_t}(x)s_1,s_2)=q(\Pi_q(x)
A\Pi_q(x)s_1,s_2)\,.
\end{equation}
Note that for all $x\in X$, the kernel \eqref{KerPiq}
of $\Pi_{q_t}(x)$ does not depend on $t\in\R$,
so that both sides of formula
\eqref{DHTpointfla} vanish as soon as
$s_1$ or $s_2$ belong to $\Ker\Pi_q(x)$.
We thus only need to show \eqref{DHTpointfla} for
$s_1$ and $s_2$ satisfying
\begin{equation}\label{s=Ps}
s_1=\Pi_q(x)s_1\quad\text{and}\quad
s_2=\Pi_q(x)s_2\,.
\end{equation}
Taking the derivative of the projection formula
$\Pi_{q_t}(x)=\Pi_{q_t}(x)\Pi_{q_t}(x)$,
we get
\begin{equation}
\Big(\D{t}\Big|_{t=0}\Pi_{q_t}(x)\Big)
\Pi_{q_t}(x)=(\Id_{\HH_p}-\Pi_{q_t}(x))
\Big(\D{t}\Big|_{t=0}\Pi_{q_t}(x)\Big)\,.
\end{equation}
Then under the assumption \eqref{s=Ps}, we get
\begin{equation}
\begin{split}
\D{t}\Big|_{t=0}
q_t(\Pi_{q_t}&(x)s_1,s_2)=
q(A\Pi_{q}(x)s_1,s_2)+
q\Big(\Big(\D{t}\Big|_{t=0}\Pi_{q_t}(x)\Big)s_1,s_2
\Big)\\
&=q(\Pi_{q}(x)A\Pi_{q}(x)s_1,s_2)+
q\Big(\Pi_{q}(x)\Big(\D{t}\Big|_{t=0}\Pi_{q_t}(x)\Big)
\Phi_{q}(x)s_1,s_2\Big)\\
&=q(\Pi_{q}(x)A\Pi_{q}(x)s_1,s_2)\,.
\end{split}
\end{equation}
which shows \eqref{DHTpointfla}.

Now using Definition \ref{nubaldef} and Lemma \ref{REp=1lem},
the identity \eqref{DHTpointfla} implies
\begin{equation}
\begin{split}
\D{t}\Big|_{t=0}\,\FS(q_t)(s_1(x),s_2(x))
&=q(\Pi_q(x)A\Pi_q(x)s_1,s_2)\\
&=\frac{\dim\HH_p}{\Vol(X,\nu)\rk(E)}\<\ev_x^*\ev_x A\ev_x^*\ev_xs_1,s_2\>_p\\
&=\frac{\dim\HH_p}{\Vol(X,\nu)\rk(E)}\FS(q)(\ev_x A\ev_x^* s_1(x),s_2(x))\,,
\end{split}
\end{equation}
so that to establish \eqref{dFSfla}, it suffices to show
\begin{equation}\label{evAev=T*A}
\frac{\dim\HH_p}{\Vol(X,\nu)\rk(E)}\ev_x A\ev_x^*=T^*(A)(x)\,.
\end{equation}
Using Definition \ref{BTquantdefE}, for any $F\in\cinf(X,\Herm(E_p))$
we compute
\begin{equation}
\begin{split}
\Tr^{\HH_p}[A\,T_p(F)]&=\int_X\Tr^{\HH_p}
[A\,\ev_x F(x)\ev_x^*]\,d\nu(x)\\
&=\int_X\Tr^{E_p}[\ev_x^*A\ev_x F(x)]\,d\nu(x)\,.
\end{split}
\end{equation}
By definition of the dual with
respect to the $L_2$-scalar product \eqref{L2alphaE} and
using Lemma \ref{REp=1lem} again, this shows \eqref{evAev=T*A}.
\end{proof}

\begin{cor}\label{dTTE}
Let $q\in\Prod(\HH_p)$ be a $\nu$-balanced product,
and consider the setting of Section \ref{BTvbsec} with
$h^{E_p}:=\FS(q)$.
Then the differential of Donaldson's map $\TT_{E_p}$
at $q\in\Prod(\HH_p)$ satisfies
\begin{equation}\label{DTT=TT*E}
D_{q}\TT_{E_p}=T_{E_p}\circ T^*_{E_p}:\Herm(\HH_p)\longrightarrow\Herm(\HH_p)\,.
\end{equation}
In particular, its positive eigenvalues with multiplicity
as an endomorphism of $\Herm(\HH_p)$ coincide
with the positive
eigenvalues of the Berezin transform associated with
$h^{E_p}=\FS(q)$.
\end{cor}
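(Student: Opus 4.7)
The plan is simply to apply the chain rule to the composition $\TT_{E_p} = \Hilb_{E_p} \circ \FS$ at the fixed point $q \in \Prod(\HH_p)$. Since $\TT_{E_p}(q) = q$ by the $\nu$-balanced assumption, and since Lemma \ref{REp=1lem} together with $h^{E_p} := \FS(q)$ places us in exactly the setting where Proposition \ref{dFSHilb} applies, both factors of the differential have already been identified: $D_q \FS = T^*_{E_p}$ and $D_{\FS(q)} \Hilb_{E_p} = T_{E_p}$. The chain rule then yields
\begin{equation}
D_q \TT_{E_p} = D_{\FS(q)} \Hilb_{E_p} \circ D_q \FS = T_{E_p} \circ T^*_{E_p}
\end{equation}
as endomorphisms of $\Herm(\HH_p)$, which is the first assertion.

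For the second assertion, the point to invoke is the standard fact that for any pair of linear maps $S : V \to W$ and $T : W \to V$ between finite-dimensional vector spaces, the compositions $ST$ and $TS$ have the same nonzero eigenvalues with the same multiplicities. Indeed, if $(ST)w = \lambda w$ with $\lambda \neq 0$, then $v := Tw$ is nonzero (else $STw = 0$) and satisfies $(TS)v = T(STw) = \lambda v$; the map $w \mapsto Tw$ thus restricts to an isomorphism between the $\lambda$-eigenspace of $ST$ and the $\lambda$-eigenspace of $TS$, with inverse induced by $S$.

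Applying this with $S = T_{E_p}$ and $T = T^*_{E_p}$, I conclude that the nonzero spectrum with multiplicity of $D_q \TT_{E_p} = T_{E_p} \circ T^*_{E_p}$ acting on $\Herm(\HH_p)$ coincides with the nonzero spectrum with multiplicity of $T^*_{E_p} \circ T_{E_p} = \cB_{E_p}$ acting on $\cinf(X, \Herm(E))$, which is precisely the Berezin transform of Definition \ref{BpdefE} associated with $h^{E_p} = \FS(q)$. There is essentially no obstacle here: the entire content of the corollary has been front-loaded into Proposition \ref{dFSHilb}, and what remains is a formal chain-rule computation plus the elementary intertwining between the two compositions. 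The only care needed is to verify that the identifications \eqref{Metid} and \eqref{Prodid} used in Proposition \ref{dFSHilb} are compatible with the chain rule, but this is immediate from their definitions via one-parameter families of the form $e^{tF}$ and $e^{tA}$.
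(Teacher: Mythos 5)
Your proof is correct and follows the same route as the paper: the identity \eqref{DTT=TT*E} is an immediate chain-rule consequence of Proposition \ref{dFSHilb}, and the spectral statement comes from the standard fact that $ST$ and $TS$ have the same nonzero spectrum with multiplicity. You have actually written the second step more carefully than the paper does (the paper's proof text inadvertently swaps the roles of $T_{E_p}\circ T^*_{E_p}$ and $T^*_{E_p}\circ T_{E_p}$ and which space each acts on, though the intended argument is the one you give), and your implicit use of positive semi-definiteness of $T_{E_p}T^*_{E_p}$ to pass from ``nonzero'' to ``positive'' is the right observation.
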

\begin{proof}
The identity \eqref{DTT=TT*E} is an immediate consequence of
Proposition \ref{dFSHilb}. For the second statement, note
that $T^*_{E_p}$ maps isomorphically
the eigenspace associated with a non-zero eigenvalue of
$T_{E_p}^*\circ T_{E_p}$ acting on $\Herm(\HH_p)$
to the eigenspace associated with the same eigenvalue of
$\cB_p:=T_{E_p}\circ T_{E_p}^*$ acting on $\cinf(X,\Herm(E_p))$.
\end{proof}

%

Recall from \cite{Don87}
and \cite{UY86} that $E$ is
\emph{Mumford stable}
if and only if $E$ is simple
and admits a Hermitian metric $h^E$
satisfying the Hermite-Einstein equation
\eqref{weakHE}.
For any Hermitian metric $h^{E_p}\in\Met(E_p)$, write
$h^{L^{-p}}\otimes h^{E_p}\in\Met(E)$ for the
Hermitian metric on $E$ induced by $h^{L}$ and $h^{E}$.
We then have the following straightforward reformulation of a
fundamental result of Wang in \cite{Wan05}.

\begin{theorem}\label{balvb}
\cite[Th.\,1.2]{Wan05}
Assume that $E$ is Mumford stable and that $\nu$ is
the Liouville measure.
Then there is $p_0\in\N$ such that for any $p\geq p_0$, there exists
a $\nu$-balanced
product $q_p\in\Prod(\HH_p)$, unique up to multiplicative
constant. Furthermore, there is an explicit constant $c_p>0$
for all $p\geq p_0$
such that the following
smooth convergence holds,
\begin{equation}\label{balvbfla}
c_p\,h^{L^{-p}}\otimes\FS(q_p)\xrightarrow{p\fl+\infty}e^f h^E\,,
\end{equation}
where $h^E\in\Met(E)$
satisfies the Hermite-Einstein equation
\eqref{weakHE} and $f\in\cinf(X,\R)$ satisfies
$\Delta\,f=2\pi\big(\scal(\om)-\int_X\scal(\om)\,\frac{d\nu}{\Vol(X)}\big)$.
\end{theorem}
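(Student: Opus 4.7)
The plan is to combine the Donaldson--Uhlenbeck--Yau theorem with a Newton-iteration argument built on the Bergman kernel expansion of Theorem \ref{Bergdiagexp} and the Berezin transform spectral gap of Theorem \ref{mainthvb}. Since $E$ is Mumford stable, Donaldson--Uhlenbeck--Yau provides a Hermitian metric $h^E$ on $E$, unique up to positive scalar, satisfying the weak Hermite-Einstein equation \eqref{weakHE}. I would set $h^{E_p}:=h^E\otimes h^{L^p}$ and take as an initial approximation the Hilbert product $q_p^{(0)}:=\Hilb_{E_p}(h^{E_p})\in\Prod(\HH_p)$.

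The first step is to show that $q_p^{(0)}$ is a fixed point of $\TT_{E_p}$ up to an error of order $O(p^{-1})$. Theorem \ref{Bergdiagexp} gives $p^{-n}\rho_{E_p}=\Id_E+p^{-1}b_1+O(p^{-2})$ with $b_1$ as in \eqref{Bergcoeff}, and the weak Hermite-Einstein equation \eqref{weakHE} forces $b_1$ to be a constant scalar endomorphism of $E$. Via Lemma \ref{REp=1lem} and the expression for the Hilbert differential in Proposition \ref{dFSHilb}, this constancy translates into the defect estimate $\|\TT_{E_p}(q_p^{(0)})-q_p^{(0)}\|=O(p^{-1})$ in a suitable operator norm on $\Prod(\HH_p)$.

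The second step is to promote this approximate fixed point to a true one. Corollary \ref{dTTE} identifies the differential $D_q\TT_{E_p}$ at a balanced $q$ with $T_{E_p}\circ T^*_{E_p}$, whose nonzero spectrum coincides with that of the Berezin transform $\cB_{E_p}$. Theorem \ref{mainthvb} then yields a spectral gap of order $\lambda_1^E/(4\pi p)+O(p^{-2})$ for $\Id-\cB_{E_p}$ on the orthogonal complement of the one-dimensional kernel spanned by $\Id_E$, giving an inverse of operator norm $O(p)$ for the linearization modulo scalars. Combined with the $O(p^{-1})$ defect from the first step, a Banach fixed point/implicit function theorem argument applied to $q\mapsto q-\TT_{E_p}(q)$ produces a genuine fixed point $q_p$ at distance $O(p^{-2})$ from $q_p^{(0)}$, establishing existence.

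For uniqueness up to scalar, I would invoke the Kempf-Ness moment map picture: balanced products correspond to zeros of a moment map on $\Prod(\HH_p)$ for the natural $\GL(\HH_p)$-action, and Mumford stability of $E$ translates into stability of the relevant orbit, so the zero locus reduces to a single orbit of the unitary group, that is, a point modulo scalar multiplication. The smooth convergence \eqref{balvbfla} then follows directly from the quantitative construction: by \eqref{REp=1} the Fubini-Study metric $\FS(q_p)$ differs from $h^{E_p}$ by $O(p^{-2})$ after the explicit normalization, so $c_p\,h^{L^{-p}}\otimes\FS(q_p)\to h^E$ smoothly. The hardest step will be securing the uniform invertibility of the linearization as $p\to+\infty$; this is precisely what Theorem \ref{mainthvb} supplies, and it replaces Wang's original energy-functional convexity argument with a more conceptual spectral input.
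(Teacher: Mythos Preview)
The paper does not prove Theorem~\ref{balvb} at all: it is stated with the attribution \cite[Th.\,1.2]{Wan05} and used as a black-box input, most notably in the proof of Theorem~\ref{gapvb}. So there is no ``paper's own proof'' to compare against; your proposal is an attempt to re-derive Wang's theorem from the machinery developed in the present paper, which is a different enterprise.

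As a sketch of such a re-derivation, your outline is broadly reasonable and close in spirit to the perturbation arguments in the literature, but a few points need care. First, Proposition~\ref{dFSHilb} and Corollary~\ref{dTTE} are proved \emph{only at a $\nu$-balanced product} (they use Lemma~\ref{REp=1lem}, i.e.\ the identity \eqref{REp=1}); to run a Newton scheme starting from a non-balanced $q_p^{(0)}$ you must first extend these differential computations to general $q$, and check that the extra terms are harmless at order $O(p^{-1})$. Second, your error bookkeeping is off: since the weak Hermite--Einstein condition makes $b_1$ a constant scalar, the defect of $q_p^{(0)}$ from balancedness is actually $O(p^{-2})$, not $O(p^{-1})$; combined with the $O(p)$ bound on the inverse of the linearization modulo scalars, this gives a correction of size $O(p^{-1})$, which is what makes the contraction argument close. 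With your stated $O(p^{-1})$ defect and $O(p)$ inverse, the first Newton step would only be $O(1)$, and the fixed-point argument would not obviously converge. Third, be aware of the logical order: in the paper, Theorem~\ref{balvb} is an \emph{input} to Theorem~\ref{gapvb}, so if you want to use the present paper's results to prove Theorem~\ref{balvb}, you may only invoke Theorem~\ref{mainthvb} and Propositions~\ref{KSexpprop}--\ref{hjrefpE} (which are metric-independent), not anything downstream of Theorem~\ref{balvb} itself.
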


Note that the limit metric in \eqref{balvbfla} is precisely what is called
a \emph{weak Hermite-Einstein metric} by Wang in \cite[Th.\,1.2]{Wan05},
which can redily be seen by comparing
the Hermite-Einstein equation \eqref{weakHE} with his weak Hermite-Einstein
equation \cite[Th.\,1.2, (2)]{Wan05} and using the last equality of
\eqref{omddbar=delta}.

The following result on the convergence of Donaldson's iterations has
then been established by Seyyedali in \cite{Sey09}.

\begin{theorem}\label{TTEcvth}
\cite{Sey09}
Aussume that $E$ is Mumford stable and that $\nu$ is
the Liouville measure. Then there is $p_0\in\N$
such that for any $p\geq p_0$ and any $q\in\Prod(\HH_p)$,
there exists a $\nu$-balanced Hermitian product
$q_p\in\Prod(\HH_p)$ such that
\begin{equation}\label{TTEcvfla}
\TT_{E_p}^r(q)\xrightarrow{r\fl+\infty}q_p\,.
\end{equation}
\end{theorem}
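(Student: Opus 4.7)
The plan is to combine a global Lyapunov (Kempf--Ness type) argument with the structure of Donaldson's map that has already been set up in this section. First I would observe that $\TT_{E_p}$ is equivariant under scaling: the orthogonal projector $\Pi_q(x)$ in \eqref{KerPiq} depends only on $q$ up to a positive scalar, so $\FS(cq) = c\,\FS(q)$ by \eqref{hFSdef} and $\TT_{E_p}(cq) = c\,\TT_{E_p}(q)$ by the $\R_{>0}$-linearity of $\Hilb_{E_p}$. Hence $\TT_{E_p}$ descends to a map $\bar\TT_{E_p}$ on the quotient $\Prod(\HH_p)/\R_{>0}$, and by Theorem \ref{balvb} this quotient map has, for $p\geq p_0$, a unique fixed point $[q_p]$.

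The key analytical input I would construct is a continuous functional
\[
\PP : \Prod(\HH_p)/\R_{>0} \longrightarrow \R
\]
with two properties: strict monotonicity $\PP(\bar\TT_{E_p}(q)) \leq \PP(q)$ with equality iff $q$ is $\nu$-balanced, and properness in the sense that sublevel sets $\{\PP \leq C\}$ are compact. Following the construction of Donaldson \cite{Don05} and Sano \cite{San06} in the scalar case, one defines $\PP$ as the primitive along geodesics of the symmetric space $\Prod(\HH_p)/\R_{>0}$ of the $1$-form $q \mapsto -\log\det\bigl(q^{-1}\,\Hilb_{E_p}(\FS(q))\bigr)$, whose exactness is a direct consequence of the two differential formulas in Proposition \ref{dFSHilb}. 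Strict monotonicity reduces to strict concavity of $\log\det$ on positive Hermitian matrices, applied fiberwise to the Hilbert average \eqref{HilbEfla}; properness is the deeper statement, equivalent via the Kempf--Ness correspondence to Mumford stability of $E$, since destabilizing subbundles are precisely the directions along which $\PP$ could escape to $-\infty$ inside a sublevel set.

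Given $\PP$, the rest of the argument is a standard dynamical systems step: the orbit $\{\bar\TT_{E_p}^r(q)\}_{r\in\N}$ stays in the compact set $\{\PP \leq \PP(q)\}$; the monotone bounded sequence $\PP(\bar\TT_{E_p}^r(q))$ converges, so any accumulation point $[q_\infty]$ satisfies $\PP(\bar\TT_{E_p}([q_\infty])) = \PP([q_\infty])$, forcing $[q_\infty]$ to be $\nu$-balanced by strict monotonicity, hence $[q_\infty] = [q_p]$ by uniqueness. To lift back to $\Prod(\HH_p)$, fix a continuous section $[q] \mapsto \hat q$ of $\Prod(\HH_p) \to \Prod(\HH_p)/\R_{>0}$ (say by normalizing $\det q = 1$) and write $\TT_{E_p}^r(q) = c_r\,\widehat{\bar\TT_{E_p}^r(q)}$ for suitable $c_r > 0$; the equivariance $\TT_{E_p}(cq) = c\,\TT_{E_p}(q)$ shows that $c_{r+1}/c_r$ depends continuously on $\bar\TT_{E_p}^r(q)$ and tends to $1$ as $r\to\infty$, while compactness of the orbit in $\Prod(\HH_p)$ (via Wang's Theorem \ref{balvb} giving uniform bounds for the section evaluated at the orbit) together with convergence in the quotient forces $c_r$ itself to converge, yielding $\TT_{E_p}^r(q) \to q_p$ for an appropriate scalar multiple $q_p$ of the normalized balanced product.

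The main obstacle is the construction and properness of $\PP$: monotonicity is essentially a fiberwise Jensen/AM--GM inequality coming from concavity of $\log\det$, but properness of the Kempf--Ness functional is the nontrivial analytical content that genuinely uses Mumford stability, and constitutes the technical heart of Seyyedali's argument in \cite{Sey09}. One should also note that local exponential convergence in a neighborhood of $[q_p]$ can be obtained independently from the spectral analysis of Section \ref{specbersec}: by Corollary \ref{dTTE} the linearization $D_{q_p}\bar\TT_{E_p}$ has positive spectrum equal to $\{\gamma^E_{k,p}\}_{k\geq 1}$, all strictly less than $1$ after quotienting out the scaling direction, so $\bar\TT_{E_p}$ is a local contraction near $[q_p]$; the role of $\PP$ is then precisely to force every orbit into such a neighborhood.
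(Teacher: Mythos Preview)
The paper does not give its own proof of this statement: Theorem~\ref{TTEcvth} is simply quoted from Seyyedali~\cite{Sey09}, with no argument supplied beyond the citation. There is therefore nothing in the paper to compare your proposal against.

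That said, your outline is a fair sketch of the standard Kempf--Ness strategy underlying~\cite{Sey09} (and~\cite{Don05,San06} in the scalar case): descend to $\Prod(\HH_p)/\R_{>0}$ by scaling equivariance, construct a proper Lyapunov functional $\PP$ whose strict decrease along orbits is a $\log\det$ concavity argument and whose properness encodes Mumford stability, and conclude that every orbit accumulates at the unique fixed point. One genuine soft spot is your lift from the quotient back to $\Prod(\HH_p)$: the fact that $c_{r+1}/c_r\to 1$ does \emph{not} by itself force $c_r$ to converge (consider $c_r=\log r$), and invoking ``compactness of the orbit in $\Prod(\HH_p)$'' to bound the $c_r$ is circular. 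You need an extra ingredient---either a direct determinant computation showing that $\TT_{E_p}$ preserves a normalization, or the summability $\sum_r|c_{r+1}/c_r-1|<\infty$ coming from the local exponential contraction you mention at the end. With that fix the argument is sound, but as written this step is a gap.
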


%
%

Let us now take for
$(E,h^E)$ the trivial Hermitian line bundle.
For any holomorphic line bundle $L$,
write $\Met^+(L)\subset\Met(L)$ for the space of
\emph{positive} Hermitian metrics on $L$, which are the Hermitian metrics
$h\in\Met(L)$ such that the associated $2$-form
$\om_h\in\Om^2(X,\R)$ defined by formula \eqref{preq} is Kähler,
so that the bilinear product $g_h^{TX}$ defined by formula \eqref{gTX}
is a Riemannian metric.
We write $\Vol_h(X)>0$ for the Riemannian volume of $(X,g^{TX}_h)$
and $\Delta_h$
for the associated Laplace-Beltrami operator acting on $\cinf(X,\C)$.
The \emph{scalar Hilbert map} is the map
\begin{equation}\label{Hilbdef}
\begin{split}
\Hilb_p:\Met^+(L^p)&\longrightarrow\Prod(\HH_p)\\
h_p\,&\longmapsto\frac{\dim\HH_p}{\Vol_{h_p}(X)}\int_X\,h_p(\cdot,\cdot)\,
\frac{\om_{h_p}^n}{n!}\,.
\end{split}
\end{equation}

Using \emph{Kodaira's embedding theorem}, let us take
$p\in\N$ big enough so that the
Fubini-Study map \eqref{FSdef} associated with $L^p$
takes values in the space $\Met^+(L^p)$ of
\emph{positive} Hermitian metrics.
We then have the following analogue of Definition \ref{nubaldef}.

\begin{defi}\label{baldef}
\emph{Donaldson's map} associated with $(X,L)$ is defined
by
\begin{equation}\label{Tdef}
\TT_p:=\Hilb_p\circ\,\FS:\Prod(\HH_p)\longrightarrow
\Prod(\HH_p)\,.
\end{equation}
A Hermitian product $q\in\Prod(\HH_p)$ is called
\emph{balanced} if 
\begin{equation}
\TT_p(q)=q\,,
\end{equation}
that is, if $q\in\Prod(\HH_p)$
is $\frac{\om_{\FS(q)}^n}{n!}$-balanced in the
sense of Definition \ref{nubaldef}.
\end{defi}

As in Section \ref{vbcase}, we recover the explicit
description \eqref{Tdefcan} for
Donaldson's map $\TT_p$.

Let $q\in\Prod(\HH_p)$ be balanced,
and recall the natural identifications
\eqref{Metid} and \eqref{Prodid} with $E=\C$.

\begin{prop}\label{dHilb}
Let $q\in\Prod(\HH_p)$ be a balanced product,
and consider the setting of Section \ref{BTPOVMsec} with $h^p:=\FS(q)$
and $d\nu_X:=\om_{\FS(q)}^n/n!$.
Then the differential of the
Hilbert map \eqref{Hilbdef} at $\FS(q)\in\Met(L^p)$ is given by
\begin{equation}\label{dHilbfla}
\begin{split}
D_{\FS(q)}\Hilb_p:\cinf(X,\R)&\longrightarrow
\Herm(\HH_p)\\
f\,&\longmapsto\,T_p\left(f+
\frac{1}{4\pi}\Delta_{\FS(q)} f\right)\,,
\end{split}
\end{equation}
where $\Delta_{\FS(q)}$
is the Laplace-Beltrami operator of $(X,g_{\FS(q)}^{TX})$
acting on $\cinf(X,\C)$.
\end{prop}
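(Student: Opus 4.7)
The plan is to parameterize the direction $f\in\cinf(X,\R)$ by the curve $h^p_t:=e^{tf}\FS(q)$ in $\Met^+(L^p)$ (which realizes the identification \eqref{Metid} in the scalar case), push this curve through the Hilbert map \eqref{Hilbdef}, compute its first-order variation explicitly, and finally translate the answer into Berezin-Toeplitz language using the balanced condition $\TT_p(q)=q$.

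First I would compute the variation of the induced volume form. Working locally with a holomorphic frame $s$ of $L^p$, one has $|s|^2_{h^p_t}=e^{tf}|s|^2_{\FS(q)}$, so the Chern curvature satisfies $R^{L^p}_t=R^{L^p}-t\,\partial\dbar f$, and hence $\om_{h^p_t}=\om_{\FS(q)}-\frac{t\sqrt{-1}}{2\pi}\partial\dbar f$. Since $\frac{\sqrt{-1}}{2\pi}\partial\dbar f$ is exact, $\om_{h^p_t}$ remains cohomologous to $\om_{\FS(q)}$, so $\Vol_{h^p_t}(X)$ is independent of $t$ by Stokes, and the normalizing factor in \eqref{Hilbdef} contributes nothing to the derivative. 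Expanding $\om^n_{h^p_t}/n!$ to first order and applying the standard Kähler identity
\[
\sqrt{-1}\,\partial\dbar f\wedge\frac{\om^{n-1}_{\FS(q)}}{(n-1)!}=-\frac{\Delta_{\FS(q)}f}{2}\cdot\frac{\om^n_{\FS(q)}}{n!},
\]
valid with the positive-operator sign convention of \eqref{delta}, yields
\[
\frac{d}{dt}\bigg|_{t=0}\frac{\om^n_{h^p_t}}{n!}=\frac{\Delta_{\FS(q)}f}{4\pi}\cdot\frac{\om^n_{\FS(q)}}{n!}.
\]

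Next I would assemble the pieces. Combining the above with $\frac{d}{dt}|_{t=0}h^p_t(s_1(x),s_2(x))=f(x)\,h^p(s_1(x),s_2(x))$ and the constancy of $\Vol_{h^p_t}(X)$, the product rule gives, for any $s_1,s_2\in\HH_p$,
\[
\frac{d}{dt}\bigg|_{t=0}\Hilb_p(h^p_t)(s_1,s_2)=\frac{\dim\HH_p}{\Vol_{\FS(q)}(X)}\int_X\Big(f+\frac{\Delta_{\FS(q)}f}{4\pi}\Big)h^p(s_1,s_2)\,\frac{\om^n_{\FS(q)}}{n!}.
\]
To conclude, I would invoke the scalar case of Lemma \ref{REp=1lem}, which (for $\nu_X=\om^n_{\FS(q)}/n!$) gives $q=\frac{\dim\HH_p}{\Vol_{\FS(q)}(X)}\<\cdot,\cdot\>_p$ at a balanced product, together with Definition \ref{BTquantdef} which implies $\<T_p(g)s_1,s_2\>_p=\int_X g(x)\,h^p(s_1(x),s_2(x))\,\om^n_{\FS(q)}/n!$ for every $g\in\cinf(X,\R)$. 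The right-hand side of the previous display then equals $q(T_p(f+\tfrac{1}{4\pi}\Delta_{\FS(q)}f)s_1,s_2)$, which under the identification \eqref{Prodid} is exactly formula \eqref{dHilbfla}.

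The only subtle point is the sign bookkeeping in the Kähler identity relating $\sqrt{-1}\,\partial\dbar f\wedge\om^{n-1}$ to the positive Laplace-Beltrami operator of \eqref{delta}; once this is pinned down, every other step is a direct unwinding of the definitions and the fixed-point condition $\Hilb_p(\FS(q))=q$. Unlike the vector bundle Proposition \ref{dFSHilb}, here the variation of the measure $\om^n_{h^p_t}/n!$ produces the additional $\frac{1}{4\pi}\Delta_{\FS(q)}f$ correction, which is the source of the nontrivial difference between $\Hilb_p$ and $\Hilb_{E_p}$.
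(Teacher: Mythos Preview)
Your proof is correct and follows essentially the same approach as the paper: parameterize by $h^p_t=e^{tf}\FS(q)$, differentiate the Hilbert map using the classical variation formula $\frac{d}{dt}\big|_{t=0}\omega^n_{h_t}/n!=\frac{1}{4\pi}\Delta_{\FS(q)}f\cdot\omega^n_{\FS(q)}/n!$ (which the paper simply cites as \eqref{dtvol} while you derive it), and identify the result as $q(T_p(f+\tfrac{1}{4\pi}\Delta_{\FS(q)}f)s_1,s_2)$ via the balanced condition. Your explicit remark that $\Vol_{h^p_t}(X)$ is constant because $\omega_{h^p_t}$ stays cohomologous to $\omega_{\FS(q)}$ is a useful clarification that the paper leaves implicit.
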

\begin{proof}
For any $f\in\cinf(X,\R)$ and $t\in\R$, set
\begin{equation}
h_t:=e^{tf}\,h^p\in\Met(L^p)\,.
\end{equation}
We will use the following classical formula of Kähler geometry,
\begin{equation}\label{dtvol}
\dt\Big|_{t=0}\,\frac{\om_{h_t}^n}{n!}=\frac{1}{4\pi}\Delta_{h^p} f~\frac{\om_{h^p}^n}{n!}\,.
\end{equation}
Then for any $s_1,\,s_2\in\HH_p$ and by Proposition \ref{BTquantfla}, we have
\begin{equation}
\begin{split}
&\dt\Big|_{t=0}\Hilb_p(h_t)(s_1,s_2)\\
&=\frac{\dim\HH_p}{\Vol_{h^p}(X)}\left(\int_X\,\dt\Big|_{t=0}
h_t(s_1,s_2)\,
dv_{h^p}+\int_X\,h^p(s_1,s_2)\,\dt\Big|_{t=0}dv_{h_t}\right)\\
&=\frac{\dim\HH_p}{\Vol_{h^p}(X)}\int_X
\left(f+\frac{1}{4\pi}\Delta_{h^p} f
\right)h^p(s_1,s_2)\,\frac{\om_{h^p}^n}{n!}\\
&=q\left(
T_p\left(f+\frac{1}{4\pi}\Delta_{h^p} f\right)\,s_1,s_2\right)\,,
\end{split}
\end{equation}
which proves \eqref{dHilbfla} taking $h^p=\FS(q)$.
\end{proof}

\begin{cor}\label{dTT}
Let $q\in\Prod(\HH_p)$ be a balanced product,
and consider the setting of Section \ref{BTPOVMsec} with $h^p:=\FS(q)$
and $d\nu_X:=\om_{\FS(q)}^n/n!$.
Then the differential of Donaldson's map $\TT_p$
at $q\in\Prod(\HH_p)$ satisfies
\begin{equation}\label{DTT=TT*}
D_{q}\TT_p=T_p
\left(1+\frac{1}{4\pi}\Delta_{\FS(q)}\right)T^*_p\,.
\end{equation}
\end{cor}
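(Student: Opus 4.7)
The plan is to apply the chain rule to the composition $\TT_p=\Hilb_p\circ\FS$ at the point $q\in\Prod(\HH_p)$. The differential of the Hilbert map has already been computed at the balanced point in Proposition \ref{dHilb}, so the only remaining task is to identify the differential $D_q\FS:\Herm(\HH_p)\to\cinf(X,\R)$ and then compose.

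For the Fubini--Study differential, I would invoke Proposition \ref{dFSHilb} specialized to the scalar case $E=\C$. By Definition \ref{baldef}, a balanced product $q$ is precisely a $\nu$-balanced product in the sense of Definition \ref{nubaldef} with respect to the Liouville measure $\nu=\om_{\FS(q)}^n/n!$ induced by the Fubini--Study Kähler form, and the Hilbert map of Definition \ref{baldef} coincides with the general Hilbert map \eqref{HilbEfla} with $E=\C$ and this choice of measure. The hypotheses of Proposition \ref{dFSHilb} therefore apply directly, yielding $D_q\FS=T_p^*$ as a map $\Herm(\HH_p)\to\cinf(X,\R)$.

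Combining the two pieces via the chain rule, for any $A\in\Herm(\HH_p)$ we obtain
\begin{equation*}
D_q\TT_p(A)=D_{\FS(q)}\Hilb_p\bigl(T_p^*(A)\bigr)
=T_p\!\left(T_p^*(A)+\frac{1}{4\pi}\Delta_{\FS(q)}T_p^*(A)\right)
=T_p\!\left(1+\frac{1}{4\pi}\Delta_{\FS(q)}\right)T_p^*(A),
\end{equation*}
which is exactly the claimed formula. The only mild subtlety to verify is that the normalizations built into the Hilbert map match in both Proposition \ref{dFSHilb} and Definition \ref{baldef}; since both use the factor $\dim\HH_p/\Vol(X,\nu)$ with $\rk(E)=1$, and since the measure $\nu$ used throughout is the one making $q$ balanced, this is immediate. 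Unlike the vector bundle case treated in Corollary \ref{dTTE}, the extra Laplacian term appears here because the measure entering the Hilbert map \eqref{Hilbdef} itself depends on the metric $h^p$, via the Kähler volume form; this dependence was already accounted for in the proof of Proposition \ref{dHilb} by differentiating $\om_h^n/n!$ along the deformation.
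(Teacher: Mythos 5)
Your argument is exactly the paper's: Corollary \ref{dTT} is obtained by the chain rule applied to $\TT_p=\Hilb_p\circ\FS$, using Proposition \ref{dHilb} for $D_{\FS(q)}\Hilb_p$ and the scalar $E=\C$ case of Proposition \ref{dFSHilb} for $D_q\FS=T_p^*$, after observing that a balanced product in the sense of Definition \ref{baldef} is $\nu$-balanced with $\nu=\om_{\FS(q)}^n/n!$ so both propositions apply. Your supplementary remarks about the normalization constants and about the extra Laplacian term arising from the metric-dependence of the volume form in \eqref{Hilbdef} are both accurate and consistent with the proof of Proposition \ref{dHilb}.
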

\begin{proof}
This is a consequence of the Definition \ref{baldef} for $\TT_p$,
together with Proposition \ref{dFSHilb} on the differential of the
Fubini-Study map
associated with $L^p$ and Proposition \ref{dHilb} on the differential
of the
scalar Hilbert map.
\end{proof}
%

Let $\Aut(X,L)$ denote
the group of holomorphic automorphisms of $X$ lifting holomorphically
to $L$. The relevance of the balanced products of
Definition \ref{baldef} in Kähler geometry is illustrated
by the following celebrated result of Donaldson.

\begin{theorem}\label{balex}
\cite{Don01}
Assume that $\Aut(X,L)$
is discrete and that there exists a positive Hermitian metric
$h_\infty\in\Met^+(L)$
such that the induced Kähler
metric $g_{\infty}^{TX}$ has \emph{constant scalar curvature}.
Then there is $p_0\in\N$ such that for
any $p\geq p_0$,
there exists a unique balanced metric $g^{TX}_p$ associated with $L^p$.
Furthermore, the following smooth convergence holds,
\begin{equation}\label{balcv}
\frac{1}{p}\,g^{TX}_p\xrightarrow{p\fl+\infty}
g_{\infty}^{TX}\,.
\end{equation}
\end{theorem}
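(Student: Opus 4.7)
\textbf{Proof proposal for Theorem \ref{balex}.}
The plan is to combine the Bergman kernel asymptotics of Theorem \ref{Bergdiagexp} with a quantitative implicit function / contraction argument based on the linearization formula of Corollary \ref{dTT}. First I would form the initial Hermitian product $q^{(p)}_\infty := \Hilb_p(h_\infty^p) \in \Prod(\HH_p)$ induced by the constant scalar curvature metric $h_\infty$, and estimate the defect $\TT_p(q^{(p)}_\infty) - q^{(p)}_\infty$. By Theorem \ref{Bergdiagexp} applied with $E=\C$, the pullback $\FS(q^{(p)}_\infty)$ differs from a constant multiple of $h_\infty^p$ by a conformal factor whose leading non-trivial term is $\tfrac{\scal(\om_{h_\infty})}{8\pi p}$; because $\scal(\om_{h_\infty})$ is constant, that term is absorbed into the overall normalization built into $\Hilb_p$, and one obtains $\TT_p(q^{(p)}_\infty) - q^{(p)}_\infty = O(p^{-2})$ in the natural Hilbert-Schmidt metric on $\Prod(\HH_p)$.

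Next I would analyse the linearization $D_{q^{(p)}_\infty}\TT_p$. At a genuine balanced product Corollary \ref{dTT} gives $D_q \TT_p = T_p\bigl(1+\tfrac{1}{4\pi}\Delta_{\FS(q)}\bigr)T_p^*$, and at the approximate point $q^{(p)}_\infty$ the same identity holds modulo $O(p^{-2})$. The crucial step is to show that on the orthogonal complement of the trivial direction $\R\,\Id_{\HH_p}$, the operator $\Id - D_{q^{(p)}_\infty}\TT_p$ is invertible with inverse of operator norm $O(p^2)$. Using Proposition \ref{KSexpprop}, Proposition \ref{boundexp}, and the semiclassical analysis that underlies Theorem \ref{TTpexpcv}, one recognises $p^2\bigl(\Id - D_{q^{(p)}_\infty}\TT_p\bigr)$ on the traceless sector as a semiclassical quantization of the fourth-order elliptic self-adjoint operator $D$ of \eqref{varscalopdef}. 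The discreteness of $\Aut(X,L)$ forces the kernel of $D$ to consist only of the constants, producing a spectral gap $\mu_1>0$ which in turn yields a $p$-independent lower bound on the first nonzero eigenvalue of $p^2\bigl(\Id - D_{q^{(p)}_\infty}\TT_p\bigr)$, as anticipated by \eqref{betapbal}.

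With an approximate fixed point of quality $O(p^{-2})$ and a linearization inverse of norm $O(p^2)$, a Newton iteration, or equivalently a Banach contraction inside a ball of radius $p^{-2+\epsilon}$ around $q^{(p)}_\infty$ in $\Prod(\HH_p)/\R^*$, produces an exact balanced product $q_p \in \Prod(\HH_p)$ for all $p \geq p_0$, together with the quantitative estimate $\textup{dist}(q_p, q^{(p)}_\infty) = O(p^{-2})$. Local uniqueness is immediate from the contraction, and global uniqueness up to the finite symmetry $\Aut(X,L)$ follows from the strict geodesic convexity of Donaldson's $Z$-functional on $\Prod(\HH_p)/\R^*$, whose critical points are exactly the balanced products. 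To extract the smooth convergence \eqref{balcv}, I would push the $O(p^{-2})$ proximity of $q_p$ to $q^{(p)}_\infty$ through the Fubini-Study map and use the prequantization identity $\om_{\FS(q)} = \tfrac{\sqrt{-1}}{2\pi}R^{\FS(q)}$ together with $\om_{h_\infty^p} = p\,\om_{h_\infty}$ to obtain $\tfrac{1}{p}\,\om_{\FS(q_p)} \to \om_{h_\infty}$ in every $C^m$-norm, hence $\tfrac{1}{p}\,g^{TX}_p \to g^{TX}_\infty$.

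The main obstacle is establishing the $p^2$-scaled uniform invertibility of the linearization: both hypotheses of the theorem enter precisely here. The constant scalar curvature assumption kills the first-order obstruction to being balanced and is what allows the defect estimate to jump from $O(p^{-1})$ to $O(p^{-2})$; the discreteness of $\Aut(X,L)$ ensures that the obstruction operator $D$ has no nontrivial kernel, so that the semiclassical inverse of norm $O(p^2)$ exists uniformly in $p$. Any weakening of either hypothesis breaks the balance between the defect and the inverse norm that makes the contraction close, which is the conceptual reason why the result is sharp.
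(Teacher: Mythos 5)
This statement is cited from Donaldson~\cite{Don01} and the paper does not supply a proof of it, so there is no in-paper argument to compare against; what follows is an assessment of your sketch on its own merits.

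Your overall architecture — approximate solution via the csc metric, quantitative invertibility of the linearization on the traceless sector, closing the argument by a Newton/contraction in $\Prod(\HH_p)/\R^*$ — is the right frame and does reflect the spirit of Donaldson's proof. However, the quantitative balance you set up does not close. You have a defect $\|\TT_p(q^{(p)}_\infty)-q^{(p)}_\infty\|=O(p^{-2})$ and an inverse of the linearization of norm $O(p^{2})$. For a Newton or Banach contraction one needs, roughly, $\|L^{-1}\|^2\cdot(\text{defect})\cdot(\text{quadratic constant})$ to be small; with your orders this product is $O(p^{2})$ and diverges. The single observation that a constant scalar curvature metric kills the first Bergman-kernel correction is not enough. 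Donaldson's actual argument constructs, by iterated use of the linearized equation (the operator $D$ of \eqref{varscalopdef}) and the second-order Bergman kernel expansion, a perturbed family of potentials $\varphi_1/p+\varphi_2/p^2+\cdots+\varphi_k/p^k$ of $h_\infty$ so that the resulting defect is $O(p^{-k-1})$ for $k$ as large as needed; this is the step where both the csc hypothesis (solvability of the linearized equation) and the discreteness of $\Aut(X,L)$ (triviality of $\ker D$) really enter, and without it your contraction does not converge for any $p$.

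Two smaller issues. First, invoking ``the semiclassical analysis that underlies Theorem \ref{TTpexpcv}'' is dangerously close to circular within this paper: the proof of Theorem \ref{estspecSpth}, hence of Theorem \ref{TTpexpcv}, explicitly uses Theorem \ref{balex} to supply the balanced metrics $h_p$ at which the operator $S_p$ is formed, and Corollary \ref{dTT} is stated only at an exact balanced product. What you may legitimately cite are the underlying, unconditional ingredients — Theorems \ref{Bergdiagexp} and \ref{Toepdiagexp}, Propositions \ref{KSexpprop} and \ref{boundexp} — together with an argument that Corollary \ref{dTT} continues to hold up to a controllable error at an approximate fixed point. Second, the passage from local uniqueness of the fixed point to global uniqueness via geodesic convexity of the energy functional is standard but not automatic: one must also verify properness or coercivity of the functional (equivalently, Chow/balanced stability of the embedding for large $p$), which in Donaldson's paper is itself a nontrivial step tied to the moment-map lower bound.

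<br>
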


On the other hand, the following result on the convergence of
the iterations of Donaldson's map \eqref{Tdef} has been
established by Donaldson in \cite{Don05} and Sano in \cite{San06}.

\begin{theorem}\label{TTpcvth}
\cite{Don05},\cite{San06}
Under the assumptions of Theorem \ref{balex},
there exist $p_0\in\N$ such that for
any $p\geq p_0$ and any $q\in\Prod(\HH_p)$, there exists
a balanced product $q_p\in\Prod(\HH_p)$ such that
\begin{equation}\label{TTpcvfla}
\TT^r_p(q)\xrightarrow{r\fl+\infty}q_p\,.
\end{equation}
\end{theorem}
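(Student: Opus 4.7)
The plan is to view the iteration $\TT_p$ as a discrete descent scheme for a strictly convex proper Lyapunov functional on the symmetric space $\MM_p := \Prod(\HH_p)/\R_{>0}$, whose unique minimum is the balanced product furnished by Theorem \ref{balex}. The scale-equivariance $\TT_p(\lambda q) = \lambda\,\TT_p(q)$ is immediate from Definition \ref{baldef} together with the analogous equivariance of $\Hilb_p$ and $\FS$, so $\TT_p$ descends to a well-defined map on $\MM_p$. This quotient is a globally symmetric space of non-positive curvature of type $\textup{SL}(N_p,\C)/\textup{SU}(N_p)$, where $N_p := \dim\HH_p$, and in particular a CAT(0) space along its standard geodesics.

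Following Donaldson \cite{Don05}, I would then introduce a Lyapunov functional $\Psi_p : \MM_p \to \R$ adapted to $\TT_p$; a natural candidate is an appropriate combination of Aubin--Yau type energies pulled back from $\Met^+(L^p)$ via $\FS$ and the log-determinant of $q$, chosen so as to couple the Fubini--Study side to the Hilbert side. The four properties to establish are: (i) $\Psi_p$ is geodesically convex on $\MM_p$; (ii) the critical points of $\Psi_p$ coincide with balanced products, which form a single $\R_{>0}$-orbit by Theorem \ref{balex}; (iii) $\Psi_p(\TT_p(q)) \leq \Psi_p(q)$ with equality iff $q$ is balanced; (iv) $\Psi_p$ is proper on $\MM_p$ under the discreteness of $\Aut(X,L)$. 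Property (iii) can be recast, following Sano \cite{San06}, as the non-negativity of a relative-entropy-type quantity between $\Hilb_p \circ \FS(q)$ and $q$, vanishing precisely at balanced products by Jensen's inequality applied to the Rawnsley density.

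Granted these properties, the conclusion is essentially formal: monotone decrease (iii) together with properness (iv) forces the forward orbit $\{\TT_p^r(q)\}_{r\in\N}$ to remain in a compact subset of $\MM_p$; any subsequential limit $q_\infty$ satisfies by continuity $\Psi_p(\TT_p(q_\infty)) = \Psi_p(q_\infty)$, hence is balanced by (iii); uniqueness of the balanced product modulo scalars then pins down a single cluster point $q_p \in \MM_p$; and finally the explicit formula for $D_{q_p}\TT_p$ supplied by Corollary \ref{dTT}, combined with the spectral gap of $\Delta_{\FS(q_p)}$ acting on $\cinf(X,\C)$ orthogonally to the constants, yields a strict contraction transverse to the scaling direction and promotes subsequential convergence to genuine convergence $\TT_p^r(q) \to q_p$ in $\Prod(\HH_p)$.

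The main obstacle is property (iv): proving that $\Psi_p$ is proper on $\MM_p$ under the discreteness of $\Aut(X,L)$. This is a finite-dimensional Kodaira-embedded incarnation of the Kempf--Ness theorem relating properness of the norm-squared of a moment map to Geometric Invariant Theory stability: one must rule out escape to infinity along one-parameter subgroups in $\textup{SL}(N_p,\C)$, and it is precisely here that the absence of continuous holomorphic automorphisms of $(X,L)$ enters the picture, via Chow stability of the Kodaira embedding $X \hookrightarrow \IP(\HH_p^*)$ for $p$ large enough. Establishing (iii) in a form strong enough to feed into the compactness argument is the other delicate point, and is most transparent in the entropy formulation mentioned above.
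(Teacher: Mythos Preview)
The paper does not supply its own proof of this statement: Theorem \ref{TTpcvth} is quoted from \cite{Don05} and \cite{San06} as an established result, and is used as a black box in the proof of Theorem \ref{TTpexpcv}. So there is no ``paper's proof'' to compare against.

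That said, your sketch is a fair outline of the Donaldson--Sano strategy. A few remarks. First, once you have established that the forward orbit is precompact in $\MM_p$ and that every cluster point is balanced, uniqueness of the balanced class modulo scalars already forces full convergence in $\MM_p$ by the elementary fact that a precompact sequence with a unique cluster point converges; your final appeal to Corollary \ref{dTT} and a local contraction is unnecessary for Theorem \ref{TTpcvth} as stated, and indeed that linearization analysis is exactly what the paper uses \emph{after} invoking Theorem \ref{TTpcvth} to prove the sharper Theorem \ref{TTpexpcv}. Second, to lift convergence from $\MM_p$ back to $\Prod(\HH_p)$ you need to control the scaling factor along the orbit; this follows from the scale-equivariance $\TT_p(\lambda q)=\lambda\TT_p(q)$ together with a normalization (e.g.\ fixing $\det q$), but you should say so explicitly. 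Third, your identification of properness (iv) as the crux is correct: in Sano's argument this is obtained from Zhang's theorem relating Chow stability to properness of the relevant functional, and Donaldson's Theorem \ref{balex} supplies the stability input for $p$ large.
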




\subsection{Scalar case and proof of Theorem \ref{gapvb}}\label{scalspecsec}

Recall that in the case $E=\C$, the Laplacian \eqref{KodLapdef} is the
Laplace-Beltrami operator $\Delta_h$ of $(X,g_h^{TX})$
acting on $\cinf(X,\C)$.
Recall the $L^2$-Hermitian product $\<\cdot,\cdot\>_{L_2}$
on $\cinf(X,\C)$
induced by the Riemannian measure of $g_h^{TX}$, and let us write $\|\cdot\|_{H^m}$ for the
Sobolev norm \eqref{ellest} of order $m\in\N$.
Using the functional calculus \eqref{calculfct},
we define an operator on
$L_2(X,\C)$ by the formula
\begin{equation}\label{Spdef}
S_p:=\left(1+\frac{\Delta_h}{4\pi p}
\right)^{1/2}\cB_p
\left(1+\frac{\Delta_h}{4\pi p}\right)^{1/2}\,,
\end{equation}
where $\cB_p$ is the Berezin transform of Definition \ref{BpdefE}
for $E=\C$. This definition is motivated by the following Lemma.

\begin{lem}\label{SpecTTp}
Let $q\in\Prod(\HH_p)$ be a balanced Hermitian
product,
and let $h\in\Met^+(L)$ be defined by $h^p:=h_{\FS(q)}$.
Then $S_p$ is a smoothing self-adjoint operator with respect
to $\<\cdot,\cdot\>_{L_2}$,
and its positive eigenvalues coincide with
the positive eigenvalues of Donaldson's map $\TT_p$
of Definition \ref{baldef}.
\end{lem}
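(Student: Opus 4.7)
The plan is to verify the three assertions in order: self-adjointness with respect to $\<\cdot,\cdot\>_{L_2}$, the smoothing property, and the identification of positive eigenvalues with those of the differential $D_q\TT_p$.

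For self-adjointness and smoothing, the key move is to exploit the balanced hypothesis through Lemma \ref{REp=1lem}, applied with $E=\C$ and $\nu=\om_{\FS(q)}^n/n!$. This forces the Rawnsley function $\rho_p$ to be the constant $\dim\HH_p/\Vol_{\FS(q)}(X)$, so the weighted product $\<\cdot,\cdot\>_{W_p}$ of \eqref{L2alpha} is a positive multiple of $\<\cdot,\cdot\>_{L_2}$. Consequently the self-adjointness of $\cB_p=T_p^*\circ T_p$ with respect to $\<\cdot,\cdot\>_{W_p}$ upgrades automatically to self-adjointness with respect to $\<\cdot,\cdot\>_{L_2}$. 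Since $(1+\Delta_h/(4\pi p))^{1/2}$ is a positive self-adjoint operator on $L_2(X,\C)$ by functional calculus applied to the self-adjoint operator $\Delta_h$, the conjugation $S_p$ is then self-adjoint. Smoothness of $S_p$ reduces to smoothness of $\cB_p$, which is immediate because $\cB_p$ factors through the finite-dimensional space $\Herm(\HH_p)$ with range consisting of smooth Berezin symbols.

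For the eigenvalue identification, the plan is to invoke the classical principle that for any bounded operator $A$ between Hilbert spaces, the products $A^*A$ and $AA^*$ share the same non-zero eigenvalues with multiplicities. Setting
\begin{equation*}
A:=T_p\circ\Bigl(1+\frac{\Delta_h}{4\pi p}\Bigr)^{1/2}\,:\,L_2(X,\C)\longrightarrow\End(\HH_p)\,,
\end{equation*}
and computing its $L_2$-adjoint using the proportionality $\<\cdot,\cdot\>_{W_p}=c\,\<\cdot,\cdot\>_{L_2}$ at the balanced point with $c=\dim\HH_p/\Vol_h(X)$, one obtains
\begin{equation*}
A^*A=c\,S_p\,,\qquad AA^*=c\,T_p\Bigl(1+\frac{\Delta_h}{4\pi p}\Bigr)T_p^*\,.
\end{equation*}
Corollary \ref{dTT}, together with the scaling $\Delta_{\FS(q)}=\Delta_{h^p}=p^{-1}\Delta_h$ coming from $\om_{\FS(q)}=p\,\om_h$, identifies the right-hand side of the second identity with $c\,D_q\TT_p$. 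Since both $S_p$ and $D_q\TT_p$ are manifestly positive semi-definite, "positive" coincides with "non-zero", and the spectral duality for $A^*A$ and $AA^*$ completes the argument.

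The only real subtlety is the careful bookkeeping of the various inner products and the factor of $p$ relating $\Delta_h$ and $\Delta_{h^p}$; once these are in place, the result reduces to the polar-decomposition identity conjugating $A^*A$ with $AA^*$.
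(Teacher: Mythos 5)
Your proof is correct and follows essentially the same path as the paper's: you invoke Lemma \ref{REp=1lem} to show that $\<\cdot,\cdot\>_{W_p}$ is a positive scalar multiple of $\<\cdot,\cdot\>_{L_2}$ at a balanced product (which gives self-adjointness and smoothing of $S_p$), then use Corollary \ref{dTT} with the scaling $p\,\Delta_{\FS(q)}=\Delta_h$ to factor $D_q\TT_p=\bigl(T_p(1+\tfrac{\Delta_h}{4\pi p})^{1/2}\bigr)\bigl(T_p(1+\tfrac{\Delta_h}{4\pi p})^{1/2}\bigr)^*$, and conclude by the standard $A^*A$ versus $AA^*$ eigenvalue identity. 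The only cosmetic difference is that you track the constant $c=\dim\HH_p/\Vol_h(X)$ explicitly in the $L_2$-adjoint (it cancels in the end), whereas the paper works directly with the $W_p$-adjoint $T_p^*$; both are fine.
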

\begin{proof}
Under the assumptions of the statement, Lemma \ref{REp=1lem} shows that
the $L_2$-products \eqref{L2} and \eqref{L2alphaE} for $E=\C$
satisfy
\begin{equation}
\<\cdot,\cdot\>_{W_p}=\frac{\dim\HH_p}{\Vol_h(X)}\<\cdot,\cdot\>_{L_2}\,.
\end{equation}
In particular, as $\cB_p$ is smoothing and self-adjoint
with respect to $\<\cdot,\cdot\>_{W_p}$, it is also self-adjoint
with respect to $\<\cdot,\cdot\>_{L_2}$, so that
$S_p$ is itself a smoothing and self-adjoint operator with respect
to $\<\cdot,\cdot\>_{L_2}$.

Recall on the other hand that as $h^p=\FS(q)$, we have
$p\,\Delta_{\FS(q)}=\Delta_h$.
Using Corollary \ref{dTT}, this implies that $D_{q}\TT_p$ is a self-adjoint
operator satisfying
\begin{equation}\label{dT=TdT}
\begin{split}
D_{q}\TT_p&=T_p\left(1+\frac{\Delta_h}{4\pi p}\right)T_p^*\\
&=
\left(T_p\left(1+\frac{\Delta_h}{4\pi p}\right)^{1/2}\right)
\left(T_p\left(1+\frac{\Delta_h}{4\pi p}\right)^{1/2}\right)^*\,.
\end{split}
\end{equation}
Following the argument of the proof of Corollary \ref{dTT},
we then see that the map $\left(T_p\left(1+\frac{\Delta_h}{4\pi p}\right)^{1/2}\right)^*$ maps isomorphically the eigenspace
associated with a non-zero eigenvalues of $D_{q}\TT_p$ acting on
$\Herm(\HH_p)$ to the eigenspace associated with the same
eigenvalue of $S_p$ acting on $\cinf(X,\C)$, which gives the result.
\end{proof}

Let us now consider a general $h\in\Met^+(L)$,
and let us study the behavior of the
operator $S_p$ of \eqref{Spdef} as $p\fl+\infty$.
Let us denote $D_h$ for
the operator \eqref{varscalopdef} acting on $\cinf(X,\R)$
describing the
variation of the scalar curvature of $g_h^{TX}$.

\begin{prop}\label{KS'}
For any $m\in\N$, there exists $C_m>0$ and $l\in\N$ such that for any
$f\in\cinf(X,\C)$ and all $p\in\N$, we have
\begin{equation}\label{KSexp'}
\left\|S_p f-
\left(1-\frac{D_h}{8\pi p^2}
\right)f\right\|_{H^m}\leq
\frac{C_m}{p^3}\|f\|_{H^{m+8}}\;,
\end{equation}
uniformly in the $\CC^m$-norm of the derivatives of $h$
up to order $l$.
\end{prop}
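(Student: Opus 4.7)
The plan is to refine the computation behind Proposition \ref{KS_KE} so that it applies to a general $h \in \Met^+(L)$, and then to combine the resulting expansion of $\cB_p$ with a functional-calculus expansion of the square-root factors appearing in the definition \eqref{Spdef} of $S_p$.

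First, I would observe that the derivation of the general Karabegov-Schlichenmaier expansion carried out in the proof of Proposition \ref{KS_KE} does not actually use the K\"ahler-Einstein hypothesis up to the computation \eqref{D4pre}; the assumption \eqref{KEdef} only enters via the identification \eqref{omddbar=delta}. For a general $h \in \Met^+(L)$, the same argument based on the pointwise identity $\cB_p f \cdot \rho_p = T_p(f)(x,x)$ of Proposition \ref{Bpflaprop} together with Theorems \ref{Bergdiagexp} and \ref{Toepdiagexp} for $E = \C$ therefore yields
\begin{equation*}
\Big|\,\cB_p f - f + \frac{\Delta_h f}{4\pi p} - \frac{1}{p^2}\Big(\frac{\Delta_h^2 f}{32\pi^2} - \frac{\sqrt{-1}}{8\pi^2}\langle \Ric(\om_h), \partial\dbar f\rangle_{g_h^{TX}}\Big)\Big|_{\CC^m} \leq \frac{C_m}{p^3}\,|f|_{\CC^{m+4}}\,,
\end{equation*}
with uniformity in the $\CC^m$-norm of the derivatives of $h$ up to some finite order, inherited from the corresponding uniformity in Theorems \ref{Bergdiagexp} and \ref{Toepdiagexp}.

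Next I would expand the square root via functional calculus for $\Delta_h$,
\begin{equation*}
\Big(1 + \frac{\Delta_h}{4\pi p}\Big)^{1/2} = 1 + \frac{\Delta_h}{8\pi p} - \frac{\Delta_h^2}{128\pi^2 p^2} + O(p^{-3})\,,
\end{equation*}
with the remainder controlled in $H^m$-norm through the elliptic characterization \eqref{ellest}. Substituting into \eqref{Spdef} and collecting terms, the $p^{-1}$ contribution cancels, since $\tfrac{\Delta_h}{8\pi} + \tfrac{\Delta_h}{8\pi} - \tfrac{\Delta_h}{4\pi} = 0$, and a careful bookkeeping of the $p^{-2}$ contribution produces exactly
\begin{equation*}
-\frac{\Delta_h^2}{32\pi^2} - \frac{\sqrt{-1}}{8\pi^2}\,\langle \Ric(\om_h), \partial\dbar\,\cdot\,\rangle_{g_h^{TX}}\,.
\end{equation*}

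The remaining step is to identify this second-order coefficient with $-D_h/(8\pi)$, which is precisely the classical Lichnerowicz-type formula for the linearization of the scalar curvature along K\"ahler potentials, as recalled in \cite[Def.\,4.3,\,Lemma\,4.4]{Sze14}. I expect this identification to be the main obstacle: one must carefully reconcile several normalization and sign conventions -- notably the $1/(2\pi)$ appearing in \eqref{preq} and \eqref{varscalopdef}, the equality $\dbar\partial = -\partial\dbar$ on functions, and the factor $\tfrac{1}{2}$ between the Laplace-Beltrami operator and the complex Laplacian -- so that the two expressions agree. Once this identification is achieved, upgrading the pointwise $\CC^m$-bound above to the desired Sobolev $H^m$-estimate \eqref{KSexp'} is routine via Sobolev embedding and elliptic regularity, and the claimed uniformity in the $\CC^m$-norm of the derivatives of $h$ is inherited directly from the corresponding uniformities in Theorems \ref{Bergdiagexp} and \ref{Toepdiagexp}.
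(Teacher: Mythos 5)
Your proposal is correct and follows the same overall strategy as the paper's proof: start from the Karabegov--Schlichenmaier expansion of $\cB_p$ for general $h$ (noting, as you correctly observe, that the derivation of $D_4 = \frac{\Delta_h^2}{32\pi^2} - \frac{\sqrt{-1}}{8\pi^2}\langle\Ric,\partial\dbar\cdot\rangle$ via Theorems \ref{Bergdiagexp} and \ref{Toepdiagexp} does not use the K\"ahler--Einstein hypothesis), then conjugate by the square-root factors, collect the $p^{-1}$ and $p^{-2}$ contributions, and finally identify the $p^{-2}$ coefficient with $-D_h/(8\pi)$ via the Lichnerowicz-type formula for the linearization of the scalar curvature (which the paper simply imports from \cite[(5)]{Fin12}).

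The one place where your method and the paper's diverge is in the handling of the square-root factors. You Taylor-expand $(1 + \Delta_h/4\pi p)^{1/2}$ to second order and track the remainder by spectral calculus, whereas the paper exploits the observation that $D_2 = -\Delta_h/4\pi$ \emph{commutes} with $(1 + \Delta_h/4\pi p)^{1/2}$, so that the two square roots can be combined exactly into a single factor: $(1 + \Delta_h/4\pi p)^{1/2}(1 + D_2/p)(1 + \Delta_h/4\pi p)^{1/2} = (1 + \Delta_h/4\pi p)(1 - \Delta_h/4\pi p) = 1 - \Delta_h^2/(16\pi^2 p^2)$, with \emph{no truncation error at all} for the order-$p^{-1}$ term. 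This sidesteps the need to bound the tail of the Taylor series (where one must be careful that $\psi(s) = (1+s)^{1/2} - 1 - s/2 + s^2/8$, while $O(s^3)$ near $0$, only grows like $s^2$ at infinity, so the uniform bound $|\psi(s)| \le C s^3$ on $[0,\infty)$ needs to be checked separately on $[0,1]$ and $[1,\infty)$). Your bookkeeping of the $p^{-2}$ coefficient ($-\Delta_h^2/(16\pi^2) + D_4 = -D_h/(8\pi)$) is nevertheless correct and agrees with the paper. One small slip: in the intermediate expansion of $\cB_p$ you write $|f|_{\CC^{m+4}}$ on the right-hand side, but the third-order error is a sixth-order differential operator, so this should read $|f|_{\CC^{m+6}}$, as in the paper's \eqref{KSexpab'}.
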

\begin{proof}
Following the proof of Proposition
\ref{KS_KE}, we know that for all $m\in\N$,
there is $C_m>0$ such that
\begin{equation}\label{KSexpab'}
\left|\cB_pf-\left(1+\frac{D_2}{p}+\frac{D_4}{p^2}\right) f
\right|_{\CC^m}\leq
\frac{C_m}{p^3}|f|_{\CC^{m+6}}\;,
\end{equation}
where $D_2$ and $D_4$ are the second order and
fourth order differential operators given by formulas
\eqref{D2} and \eqref{D4pre}.
Now following e.g. \cite[(5)]{Fin12} with our conventions,
we have the following formula for the variation of
scalar curvature operator,
\begin{equation}
D_h\,f=\frac{\Delta_h^2}{4\pi}f+\frac{\sqrt{-1}}{\pi}
\<\Ric(\om),\partial\dbar f\>_{g_h}\,,
\end{equation}
so that we get
\begin{equation}\label{D4}
D_4=\frac{\Delta_h^2}{16\pi^2}-\frac{D_h}{8\pi}\,.
\end{equation}
On the other hand, $\Delta_h$ commutes
with $\left(1+\frac{\Delta_h}{4\pi p}\right)^{1/2}$, and
by definition \eqref{ellest}
of the Sobolev norms,
there exists a constant $C_0>0$
such that, for all $f\in\cinf(X,\R)$ and $p\in\N$,
we have $\|f-\left(1+\frac{\Delta_h}{4\pi p}\right)^{1/2}f\|_{H^m}
\leq C_0p^{-1}\|f\|_{H^{m+2}}$.
Using formulas \eqref{D2} and \eqref{D4}, we thus
get a constant $C>0$ sucht for any $f\in\cinf(X,\R)$
and $p\in\N$,
\begin{equation}\label{details}
\begin{split}
&\left\|\left(1+\frac{\Delta_h}{4\pi p}\right)^{1/2}\left(1+\frac{D_2}{p}+\frac{D_4}{p^2}\right) \left(1+\frac{\Delta_h}{4\pi p}\right)^{1/2}f\right\|_{H^m}\\
&=\left\|f+\frac{\Delta_h^3}{64\pi^3p^3}f-\left(1+\frac{\Delta_h}{4\pi p}\right)^{1/2}
\frac{D_h}{8\pi p^2}\left(1+\frac{\Delta_h}{4\pi p}\right)^{1/2}f\right\|_{H^m}\\
&\leq\left\|f-\frac{D_h}{8\pi p^2}f\right\|_{H^m}
+\frac{1}{p^3}\left\|\frac{\Delta_h^3}{64\pi^3}f\right\|_{H^m}\\
&+
\frac{1}{p^2}\left\|\frac{D_h}{8\pi}f-\left(1+\frac{\Delta_h}{4\pi p}\right)^{1/2}
\frac{D_h}{8\pi}\left(1+\frac{\Delta_h}{4\pi p}\right)^{1/2}f\right\|_{H^m}\\
&\leq\left\|f-\frac{D_h}{8\pi p^2}f\right\|_{H^m}+\frac{C}{p^3}
\|f\|_{H^{m+8}}\,.
\end{split}
\end{equation}
Taking the expansion \eqref{KSexpab'} into definition \eqref{Spdef}
of $S_p$, formula \eqref{KSexp'} for $m=0$ follows from the estimate \eqref{details}.
The case of general $m\in\N$ follows in the same way
using formula \eqref{ellest} for the Sobolev norm.
\end{proof}

\begin{prop}\label{boundexp'}
For any $m,\,k_1,\,k_2\in\N$, there exists $C>0$ and $l\in\N$
such that for any
$f\in\cinf(X,\C)$ and all $p\in\N$, we have
\begin{equation}\label{boundexpfla'}
\left\|\left(\frac{\Delta_h}{p}\right)^{k_1}
\left(e^{-\frac{\Delta_h}{4\pi p}}
-\cB_p\right)
\left(\frac{\Delta_h}{p}\right)^{k_2}f\,\right\|_{H^m}\leq
\frac{C}{p}\|f\|_{H^m}\;,
\end{equation}
uniformly in the
$\CC^m$-norm of the derivatives of
$h$ up to order $l$.
\end{prop}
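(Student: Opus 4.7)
The plan is to refine the argument of Proposition \ref{boundexp} by tracking how the factors $(\Delta_h/p)^{k_i}$ interact with the Schwartz kernels of $e^{-\Delta_h/(4\pi p)}$ and $\cB_p$. Both kernels concentrate near the diagonal at the semiclassical scale $p^{-1/2}$, so $\Delta_h/p$ should behave as a bounded operator on such kernels uniformly in $p\in\N$; once this uniform boundedness is made precise, the $O(p^{-1})$ bound from Proposition \ref{boundexp} propagates with the extra factors in place.

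I would proceed in three steps. For the heat semigroup side, $\Delta_h$ commutes with $e^{-\Delta_h/(4\pi p)}$ and the spectral calculus yields the pointwise bound
\[
\sup_{\lambda\geq 0}\left(\frac{\lambda}{p}\right)^{k} e^{-\lambda/(4\pi p)} \leq \left(\frac{4\pi k}{e}\right)^{k},
\]
so that $(\Delta_h/p)^{k_1+k_2}e^{-\Delta_h/(4\pi p)}$ is bounded on every $H^m$ uniformly in $p$. For the Bergman side, the Karabegov--Schlichenmaier expansion of Proposition \ref{KSexpprop} with $E=\C$, combined with the higher-order expansion of the Berezin transform obtained from Theorem \ref{Toepdiagexp}, yields in suitable Sobolev operator norms a commutator estimate of the form $\|[\Delta_h/p,\cB_p]\|_{H^{m+l}\to H^m}=O(p^{-2})$ for some finite $l$, allowing us to rearrange all factors of $\Delta_h/p$ onto a single side modulo a remainder of order $p^{-2}$, which is absorbed into the claimed bound.

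After this rearrangement the task reduces to estimating $(\Delta_h/p)^{k_1+k_2}\bigl(e^{-\Delta_h/(4\pi p)}-\cB_p\bigr)$ in $H^m$ operator norm. The main obstacle is to show that the $O(p^{-1})$ bound of Proposition \ref{boundexp} survives composition with polynomial factors in $\Delta_h/p$. This is handled by revisiting the proof of Proposition \ref{boundexp}: the off-diagonal expansion \cite[Th.\,4.18']{DLM06} represents the Bergman kernel as $p^n$ times a Gaussian of scale $p^{-1/2}$ against a formal power series in $p^{-1/2}$, while the classical heat parametrix \cite[Th.\,2.29]{BGV04} at time $1/(4\pi p)$ has the same structure. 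Passing to the rescaled coordinate $u=\sqrt{p}(x-y)$, the operator $\Delta_h/p$ becomes a rescaled Laplacian acting on Schwartz-class Gaussian profiles, producing a uniformly bounded contribution at each order of the expansion. The difference of the two parametrices remains $O(p^{-1})$ in this rescaled picture, yielding the desired kernel estimate and hence \eqref{boundexpfla'} after testing against $f$.
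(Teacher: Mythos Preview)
Your Step 3 contains the essential idea and is in line with the paper's approach: both the Bergman kernel and the heat kernel at time $1/(4\pi p)$ admit off-diagonal expansions concentrated at scale $p^{-1/2}$, and in the rescaled coordinate $u=\sqrt{p}(x-y)$ the operator $\Delta_h/p$ acts as a uniformly bounded differential operator on the Gaussian profiles, so the $O(p^{-1})$ difference survives composition with such factors. The paper packages this by citing \cite[Th.\,4.2.1]{MM07} for the Bergman side and the Lu--Ma appendix \cite[Th.\,25]{Fin10} for the $Q_K$-operator, results which already contain the needed two-sided kernel estimates with Laplacian factors built in.

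The gap is in Step 2. Your commutator bound $\|[\Delta_h/p,\cB_p]\|_{H^{m+l}\to H^m}=O(p^{-2})$ is correct (one even gets $O(p^{-3})$ from Proposition \ref{KSexpprop}), but it \emph{loses $l$ derivatives}. When you say the remainder ``is absorbed into the claimed bound'', note that \eqref{boundexpfla'} has $\|f\|_{H^m}$ on the right, not $\|f\|_{H^{m+l}}$; extra negative powers of $p$ do not trade against Sobolev loss for a general $f$. After iterating the commutator $k_2$ times and then applying $(\Delta_h/p)^{k_1}$, the error term is controlled only by $\|f\|_{H^{m+L}}$ for some $L=L(k_1,k_2)$, which is strictly weaker than what is asserted. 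The reduction to the one-sided case therefore does not go through as written. The remedy is simply to drop Step 2 and run the rescaled-kernel argument of Step 3 with the factors $(\Delta_h/p)^{k_1}$ and $(\Delta_h/p)^{k_2}$ acting in the $x$- and $y$-variables of the kernel separately from the outset; in the rescaled picture each becomes a bounded operator on the Schwartz-class profile, and the argument of Proposition \ref{boundexp} then yields \eqref{boundexpfla'} directly.
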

\begin{proof}
The case $m=0$ follows from the uniformity in the
estimates of the Bergman kernel of \cite[Th.\,4.2.1]{MM07}
and the analogous result of Lu and Ma
in the appendix of \cite[Th.\,25]{Fin10} for the $Q_K$-operator.
This proof readily extends to the case of general
$m\in\N$, following the analogous extension in the proof of
\cite[Prop.\,3.9]{IKPS19}.
\end{proof}

\begin{cor}\label{boundexpcor}
For any $m\in\N$, there exists $C>0$ and $l\in\N$ such that for any
$f\in\cinf(X,\C)$ and all $p\in\N$, we have
\begin{equation}\label{boundexpflacor}
\left\|\left(\Big(1+\frac{\Delta_h}{4\pi p}
\Big)e^{-\frac{\Delta_h}{4\pi p}}
-S_p\right)f\right\|_{H^m}\leq
\frac{C_m}{p}\|f\|_{H^m}\;,
\end{equation}
uniformly in the
$\CC^m$-norm of the derivatives of
$h$ up to order $l$.
\end{cor}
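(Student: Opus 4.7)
The plan is to reduce Corollary \ref{boundexpcor} to Proposition \ref{boundexp'} via the spectral calculus for $\Delta_h$. Since both $1 + \Delta_h/(4\pi p)$ and $e^{-\Delta_h/(4\pi p)}$ are functions of $\Delta_h$, they commute, and we may symmetrize:
\[
\Bigl(1+\frac{\Delta_h}{4\pi p}\Bigr)e^{-\frac{\Delta_h}{4\pi p}} = \Bigl(1+\frac{\Delta_h}{4\pi p}\Bigr)^{\!1/2}e^{-\frac{\Delta_h}{4\pi p}}\Bigl(1+\frac{\Delta_h}{4\pi p}\Bigr)^{\!1/2}.
\]
Combined with the definition \eqref{Spdef} of $S_p$, this yields the crucial factorization
\[
\Bigl(1+\frac{\Delta_h}{4\pi p}\Bigr)e^{-\frac{\Delta_h}{4\pi p}}-S_p \,=\, \Bigl(1+\frac{\Delta_h}{4\pi p}\Bigr)^{\!1/2}\!\Bigl(e^{-\frac{\Delta_h}{4\pi p}}-\cB_p\Bigr)\Bigl(1+\frac{\Delta_h}{4\pi p}\Bigr)^{\!1/2}.
\]

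Next, I would use the elementary identity $(1+x)^{1/2} = 1 + x\bigl((1+x)^{1/2}+1\bigr)^{-1}$ valid for $x\geq 0$, applied via the functional calculus \eqref{calculfct} to the non-negative self-adjoint operator $\Delta_h/(4\pi p)$, to write
\[
\Bigl(1+\frac{\Delta_h}{4\pi p}\Bigr)^{\!1/2} = I + V_p,\qquad V_p := \frac{\Delta_h}{4\pi p}\,U_p,\qquad U_p := \Bigl(\bigl(1+\tfrac{\Delta_h}{4\pi p}\bigr)^{\!1/2}+1\Bigr)^{\!-1}.
\]
Since $\Delta_h\geq 0$, the operator $U_p$ has spectrum contained in $[0,1/2]$; as it commutes with every function of $\Delta_h$, it is in particular bounded on $H^m$ with operator norm at most $1/2$, uniformly in $p\in\N$ and in the $\CC^m$-norm of the derivatives of $h$.

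Setting $A_p := e^{-\Delta_h/(4\pi p)}-\cB_p$, the factorization above expands into the four contributions
\[
A_p + V_p A_p + A_p V_p + V_p A_p V_p.
\]
The leading term is controlled by Proposition \ref{boundexp'} with $k_1=k_2=0$. For the cross terms, I would write $V_p = (4\pi)^{-1}U_p(\Delta_h/p) = (4\pi)^{-1}(\Delta_h/p)U_p$ and apply Proposition \ref{boundexp'} with $(k_1,k_2)\in\{(1,0),(0,1)\}$, together with the uniform boundedness of $U_p$ on $H^m$ to absorb the outer factor. The quadratic term $V_pA_pV_p$ is treated in the same manner with $(k_1,k_2)=(1,1)$, bracketing the estimate with the two $U_p$ factors. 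Summing yields the claimed bound $C_m p^{-1}\|f\|_{H^m}$.

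The uniformity in the $\CC^m$-norm of derivatives of $h$ up to some finite order $l$ is inherited from Proposition \ref{boundexp'}, since $U_p$ depends on $h$ only through $\Delta_h$ with a uniform bound on its operator norm. The only genuinely delicate point is the non-commutativity between $\cB_p$ and $\Delta_h$, which prevents a naive use of Proposition \ref{boundexp'} with $k_j=0$ alone; this is precisely why the two-sided formulation of Proposition \ref{boundexp'}, with $(\Delta_h/p)^{k_1}$ on the left and $(\Delta_h/p)^{k_2}$ on the right, is indispensable and matches exactly the expansion of $(I+V_p)A_p(I+V_p)$.
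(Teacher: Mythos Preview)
Your proof is correct and follows essentially the same route as the paper: both begin with the factorization
\[
\Bigl(1+\tfrac{\Delta_h}{4\pi p}\Bigr)e^{-\Delta_h/(4\pi p)}-S_p=\Bigl(1+\tfrac{\Delta_h}{4\pi p}\Bigr)^{1/2}\bigl(e^{-\Delta_h/(4\pi p)}-\cB_p\bigr)\Bigl(1+\tfrac{\Delta_h}{4\pi p}\Bigr)^{1/2}
\]
and then reduce to Proposition \ref{boundexp'} with $k_1,k_2\in\{0,1\}$. The only cosmetic difference is in how the square-root factors are unpacked: you use the algebraic identity $(1+x)^{1/2}=1+xU_p$ with $U_p$ uniformly bounded, whereas the paper first dominates $(1+\Delta_h/(4\pi p))^{1/2}$ by $(1+\Delta_h/(4\pi p))$ via the contraction $(1+\Delta_h/(4\pi p))^{-1/2}$ on $H^m$ and then expands $1+\Delta_h/(4\pi p)$ directly; both devices serve the same purpose and yield the same four-term application of Proposition \ref{boundexp'}.
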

\begin{proof}
Fix $p\in\N$, and consider
the functional calculus \eqref{calculfct}
with $\Psi:\R\fl\R$ given by $\Psi(s)=(1+s/4\pi p)^{-1/2}$
for $s\geq 0$ and $\Psi(s)=0$ otherwise,
so that in particular $\Psi(s)\leq 1$ for all $s\in\R$.
Then using the elliptic estimates \eqref{ellest} and the
fact that $\Delta_h$ commutes with $\Psi(\Delta_h)$,
for any
$f\in\cinf(X,\C)$ and $m\in\N$, we get
\begin{equation}
\left\|\Big(1+\frac{\Delta_h}{4\pi p}\Big)^{-1/2}f\right\|_{H^m}
\leq\|f\|_{H^m}\,.
\end{equation}
Now by definition \eqref{Spdef} of the operator
$S_p$, using Proposition \ref{boundexp} and the fact that any
function of $\Delta_h$ commutes with the heat operator
$e^{-\frac{\Delta_h}{4\pi p}}$, this implies that for all
$m\in\N$, there exists $C_m>0$ such that for all
$f\in\cinf(X,\C)$ and $p\in\N$,
\begin{equation}
\begin{split}
\Big\|\Big(\Big(1+\frac{\Delta_h}{4\pi p}
\Big)&e^{-\frac{\Delta_h}{4\pi p}}
-S_p\Big)f\Big\|_{H^m}\\
&=
\left\|\Big(1+\frac{\Delta_h}{4\pi p}
\Big)^{1/2}\left(e^{-\frac{\Delta_h}{4\pi p}}
-\cB_p\right)\Big(1+\frac{\Delta_h}{4\pi p}
\Big)^{1/2}f\right\|_{H^m}\\
&\leq
\left\|\Big(1+\frac{\Delta_h}{4\pi p}
\Big)\left(e^{-\frac{\Delta_h}{4\pi p}}
-\cB_p\right)\Big(1+\frac{\Delta_h}{4\pi p}
\Big)^{1/2}f\right\|_{H^m}\\
&\leq\frac{C_m}{p}\left\|\Big(1+\frac{\Delta_h}{4\pi p}
\Big)^{-1/2}f\right\|_{H^m}\leq\frac{C_m}{p}\left\|f\right\|_{H^m}\,.
\end{split}
\end{equation}
This proves the result.

\end{proof}

\begin{prop}\label{hjrefp'}
For any $L>0$ and $m\in\N$, there exist constants $C_{m}>0$ and
$p_m\in\N$,
uniform in the $\CC^m$-norm of the derivatives of
$h$ up to some finite order,
such that for any $p\geq p_m,\,\mu\in\C$ and $f\in\cinf(X,\C)$
satisfying
\begin{equation}\label{Sp=muf}
S_pf=\mu f~~\text{and}~~p|1-\mu|<L\,,
\end{equation}
we have
\begin{equation}\label{hjrefined}
\|f\|_{H^m}\leq C_{m}\|f\|_{L_2}\;.
\end{equation}
\end{prop}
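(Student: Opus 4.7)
The plan is to follow the semi-classical resolvent strategy introduced by Lebeau and Michel in \cite{LM10} and adapted in the proof of \cite[Theorem 3.1]{IKPS19}, using here the sharper expansions in Proposition \ref{KS'} and Corollary \ref{boundexpcor} in place of the cruder estimates employed for $\cB_p$ there. The argument proceeds by induction on $m\in\N$, the base case $m=0$ being trivial.

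Granting the uniform bound $\|f\|_{H^m}\leq C_m\|f\|_{L_2}$ for the induction step, Proposition \ref{KS'} combined with the eigenfunction equation $S_pf=\mu f$ gives
\begin{equation*}
\|D_h f - 8\pi p^2(1-\mu)f\|_{H^m}\leq C_m\, p^{-1}\|f\|_{H^{m+4}},
\end{equation*}
so that $f$ is an approximate eigenfunction of the fourth-order elliptic self-adjoint operator $D_h$ with pseudo-eigenvalue $8\pi p^2(1-\mu)$ bounded by $8\pi Lp$ under the hypothesis $p|1-\mu|<L$. Elliptic regularity of $D_h$ yields $\|f\|_{H^{m+4}}\leq C(\|D_h f\|_{H^m}+\|f\|_{H^m})$, and substituting produces, for $p$ large enough to absorb the self-referencing $p^{-1}\|f\|_{H^{m+4}}$ term on the left, a preliminary polynomial bound
\begin{equation*}
\|f\|_{H^{m+4}}\leq A_m\, p\, \|f\|_{H^m}.
\end{equation*}

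To promote this polynomial bound to a uniform one I would invoke Corollary \ref{boundexpcor}, which rewrites the eigenfunction equation as
\begin{equation*}
\bigl(\mu-\Psi_p(\Delta_h)\bigr)f=R_p f,\qquad \Psi_p(s):=\bigl(1+s/(4\pi p)\bigr)e^{-s/(4\pi p)},
\end{equation*}
with $\|R_p\|_{H^k\to H^k}=O(p^{-1})$ uniformly in $k$. The Lebeau--Michel argument then combines the spectral inversion of $\mu-\Psi_p(\Delta_h)$ on the range of $\Delta_h$ where this multiplier is bounded away from zero, with a quantitative treatment of the critical range; feeding the preliminary polynomial bound into the right-hand side $R_pf$ and iterating the inversion finitely many times, the factors of $p$ cancel and one obtains $\|f\|_{H^{m+4}}\leq C_{m+4}\|f\|_{L_2}$ uniformly in $p$, closing the induction.

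The main obstacle is the intermediate spectral regime of $\Delta_h$ near $\lambda\sim\sqrt{p}$, where $\Psi_p(\lambda)$ approaches $\mu$ and the resolvent $(\mu-\Psi_p(\Delta_h))^{-1}$ fails to be uniformly bounded; this is precisely where the refined accuracy of Proposition \ref{KS'}, with error $O(p^{-3}\|f\|_{H^{m+4}})$ rather than the $O(p^{-2})$ error of the Karabegov--Schlichenmaier expansion for $\cB_p$, becomes indispensable to make the bootstrap close at a $p$-uniform constant.
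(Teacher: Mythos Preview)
Your proposal has a genuine gap at exactly the point you flag as the ``main obstacle,'' and your proposed resolution does not work.

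The paper's proof does \emph{not} use Proposition~\ref{KS'} at all; it relies only on Corollary~\ref{boundexpcor}. Your claim that the refined $O(p^{-3})$ accuracy of Proposition~\ref{KS'} is ``indispensable'' to close the bootstrap is therefore incorrect, and in fact you never explain concretely how that extra order would help you invert $\mu-\Psi_p(\Delta_h)$ on the critical window $\lambda\sim\sqrt{p}$. Your preliminary polynomial bound $\|f\|_{H^{m+4}}\le A_m\,p\,\|f\|_{H^m}$ is fine, but the sentence ``feeding the preliminary polynomial bound into the right-hand side $R_pf$ and iterating the inversion finitely many times, the factors of $p$ cancel'' is a hope, not an argument: on the spectral window where $|\mu-\Psi_p(\lambda)|$ is small you have no control on the inverse, and nothing you wrote produces one.

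The paper sidesteps the inversion entirely by introducing the bounded auxiliary function $\Psi(s)=\bigl(1-(1+s)e^{-s}\bigr)/s$. Writing the eigenvalue equation as
\[
p\Bigl(\Bigl(1+\tfrac{\Delta_h}{4\pi p}\Bigr)e^{-\Delta_h/4\pi p}-S_p\Bigr)f
= p(1-\mu)f-\frac{\Delta_h}{4\pi}\,\Psi(\Delta_h/4\pi p)\,f,
\]
Corollary~\ref{boundexpcor} bounds the left side by $C\|f\|_{H^m}$, and since $p|1-\mu|<L$ this yields directly $\|\Psi(\Delta_h/4\pi p)f\|_{H^{m+2}}\le C\|f\|_{H^m}$. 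A second application of Corollary~\ref{boundexpcor} gives a uniform lower bound $\|\Psi(\Delta_h/4\pi p)f\|_{H^{m+2}}\ge c\|f\|_{H^{m+2}}$, so combining the two yields $\|f\|_{H^{m+2}}\le C\|f\|_{H^m}$ with a $p$-independent constant, and one inducts. The point is that $\Psi$ is bounded and bounded below (in the relevant sense) uniformly in $p$, so no resolvent estimate on a degenerating multiplier is ever needed; this is the idea you are missing.
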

\begin{proof}
For any $p\in\N$, $f\in\cinf(X,\C)$ and $\mu\in\C$
such that $S_pf=\mu f$, we have
\begin{equation}\label{deltFest}
\begin{split}
p\Big(\Big(1+\frac{\Delta_h}{4\pi p}\Big)
e^{-\frac{\Delta_h}{4\pi p}}
-S_p\Big)f&=p(1-\mu)f-
p\Big(1-\Big(1+\frac{\Delta_h}{4\pi p}\Big)e^{-\frac{\Delta_h}
{4\pi p}}\Big)f\\
&=p(1-\mu)f-\frac{\Delta_h}
{4\pi}\Psi(\Delta_h/4\pi p)\,f\;,
\end{split}
\end{equation}
where the bounded operator $\Psi(\Delta_h/4\pi p)$ acting on
$L_2(X,\C)$ is defined as in \eqref{calculfct} for the
continuous function $\Psi:\R\fl\R$ given for any $s\in\R^*$ by
\begin{equation}
\Psi(s):=\frac{1-e^{-s}(1+s)}{s}\,.
\end{equation}
Using Corollary \ref{boundexpcor} and formula \eqref{ellest},
formula \eqref{deltFest} implies that for any $L>0$ and $m\in\N$,
there is a constant $C>0$, uniform
in the $\CC^m$-norm of the derivatives of
$h$ up to some finite order, such that
for any $f\in\cinf(X,\C)$ and $\mu\in\C$ satisfying \eqref{Sp=muf},
we have
\begin{equation}\label{sobfest}
\|\Psi(\Delta_h/4\pi p)f\|_{H^{m+2}}\leq C\|f\|_{H^m}\;.
\end{equation}
On the other hand, using Proposition \ref{boundexp'} again, we get
\begin{equation}\label{hjdeltFest}
\begin{split}
\|\Psi(\Delta_h/4\pi p)f\|_{H^m}&\geq
\Big\|\Psi(\Delta_h/4\pi p)f+\Big(S_p-\Big(1+\frac{\Delta_h}
{4\pi p}\Big)
e^{-\frac{\Delta_h}{4\pi p}}\Big)f\Big\|_{H^m}\\
&\quad\quad-\Big\|\Big(S_p-\Big(1+\frac{\Delta_h}{4\pi p}\Big)e^{-\frac{\Delta_h}{4\pi p}}\Big)f\Big\|_{H^m}\\
&\geq\inf_{s>0}\,\{\Psi(s)+\mu-(1+s)e^{-s}\}\,\|f\|_{H^m}
-C_m p^{-1}\|f\|_{H^m}\;.
\end{split}
\end{equation}
Combining \eqref{sobfest} and \eqref{hjdeltFest},
we see that for any $L>0$ and $m\in\N$, there exists
$p_0\in\N$ and $C>0$, uniform
in the $\CC^m$-norm of the derivatives of
$h$ up to some finite order, such that for any $p\geq p_0,\,
f\in\cinf(X,\C)$ and $\mu\in\C$ satisfying \eqref{Sp=muf},
we have
\begin{equation}
\|f\|_{H^{m+2}}\geq C\|f\|_{H^m}\,.
\end{equation}
This implies formula \eqref{hjrefined} by induction on $m\in\N$.
\end{proof}

Let now $h_\infty\in\Met^+(L)$ be such that $g_{h_\infty}^{TX}$ has
constant scalar curvature, and recall that
the associated variation of scalar curvature operator
$D_{h_\infty}$ of \eqref{varscalopdef} is then a positive elliptic
self-adjoint operator. Write
\begin{equation}\label{evmu}
0=\mu_0\leq\mu_1
\leq\dots\leq\mu_j\leq\dots
\end{equation}
for the increasing sequence of its eigenvalues.

On the other hand, Theorem \ref{balex} gives us
Hermitian metrics $h_p\in\Met^+(L)$ for all $p\in\N$ big enough,
such that the K\"ahler metrics $g_{h_p^p}^{TX}$ associated 
with $h_p^p\in\Met^+(L^p)$ are balanced
and such that $h_p\to h_{\infty}$ as $p\to+\infty$.
Then by Lemma \ref{SpecTTp},
the smoothing operator $S_p$ defined in \eqref{Spdef} for
$h_p\in\Met^+(L)$ is self-adjoint
with respect to $\<\cdot,\cdot\>_{L_2}$.
Write
\begin{equation}\label{gammaomp}
\mu_{0,p}\geq\mu_{1,p}
\geq\dots\geq\mu_{j,p}\geq\dots\geq 0
\end{equation}
for the decreasing sequence of its eigenvalues.

\begin{theorem}\label{estspecSpth}
Assume that $h_p\in\Met^+(L)$ is such that $g_{h_p^p}^{TX}$
is balanced, for all $p\in\N$ big enough.
Then the eigenvalues \eqref{gammaomp} of the associated operator
$S_p$ as in \eqref{Spdef} satisfy the following estimate as
$p\fl+\infty$,
\begin{equation}
1-\mu_{j,p}=\frac{\mu_j}{8\pi p^2}+o(p^{-2})\,.
\end{equation}
\end{theorem}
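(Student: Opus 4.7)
The plan is to mimic the proof of Theorem \ref{mainthKE} given at the end of Section \ref{BTKEsec}, with Proposition \ref{KS'} playing the role of Proposition \ref{KS_KE} and Proposition \ref{hjrefp'} playing the role of Proposition \ref{hjrefpE}. The crucial structural difference with the K\"ahler-Einstein case is that the operator $S_p$ is now built from a Hermitian metric $h_p\in\Met^+(L)$ that varies with $p$, converging smoothly to $h_\infty$ by Theorem \ref{balex}. Consequently the uniformity of the constants in the $\CC^m$-norm of derivatives of $h$ asserted in both Propositions \ref{KS'} and \ref{hjrefp'} is the central ingredient.

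First I would combine \eqref{KSexp'} applied to $h=h_p$ with the smooth convergence $D_{h_p}\to D_{h_\infty}$ guaranteed by Theorem \ref{balex} to deduce that for every $m\in\N$ there exists $C_m>0$ such that
\[
\left\|\,p^{2}(\Id-S_p)f-\frac{1}{8\pi}D_{h_\infty}f\,\right\|_{H^m}\leq C_m p^{-1}\|f\|_{H^{m+4}}
\]
for all $f\in\cinf(X,\C)$ and all $p$ large enough. For the \emph{lower bound} on the number of eigenvalues of $p^2(\Id-S_p)$ near $\mu_j/(8\pi)$, I would let $m_j$ be the multiplicity of $\mu_j$ in the spectrum of $D_{h_\infty}$ and take an $L_2$-orthonormal basis $\{e_1,\dots,e_{m_j}\}$ of the corresponding eigenspace; applying the above inequality to each $e_k$ and using the Sobolev embedding, we would obtain at least $m_j$ eigenvalues of $p^2(\Id-S_p)$ inside $[\mu_j/(8\pi)-Cp^{-1},\mu_j/(8\pi)+Cp^{-1}]$, as in the analogue of formula \eqref{spec>}.

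For the \emph{upper bound} and the matching multiplicities, I would pick an arbitrary sequence $\{f_p\}$ of $L_2$-normalized eigenfunctions of $S_p$ with eigenvalues $\mu_p$ satisfying $p^2|1-\mu_p|\leq L$ for some fixed $L>0$. Proposition \ref{hjrefp'}, together with its uniformity in $h$ and the smooth convergence $h_p\to h_\infty$, guarantees that $\{f_p\}$ is bounded in every Sobolev norm $H^m$. By the Rellich-Kondrakhov compact embedding, a subsequence converges in $H^{m-1}$ (for any fixed $m$) to a nontrivial limit $f_\infty$, and passing to the limit in the asymptotic expansion forces $D_{h_\infty}f_\infty=8\pi\mu_\infty f_\infty$ where $\mu_\infty=\lim p^2(1-\mu_p)$. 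The standard spectral counting argument reproduced at the end of the proof of Theorem \ref{mainthKE}, using the fact that any window $[\mu_j/(8\pi)-Cp^{-1},\mu_j/(8\pi)+Cp^{-1}]$ eventually isolates the single eigenvalue $\mu_j$ of $D_{h_\infty}$, then forces an equality of multiplicities and the one-to-one asymptotic correspondence $1-\mu_{j,p}=\mu_j/(8\pi p^2)+o(p^{-2})$.

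The main obstacle is to verify that the uniformity in $h$ provided by Propositions \ref{KS'} and \ref{hjrefp'} is genuinely strong enough to be applied along the sequence $h_p$ (rather than at a single fixed metric), which is why the statements of those Propositions insist on control by finitely many $\CC^m$-derivatives of $h$: the smooth convergence $h_p\to h_\infty$ from Theorem \ref{balex} then implies uniform bounds along the whole sequence. Once this uniformity is granted, the rest of the argument is a direct adaptation of the spectral convergence scheme already carried out for Theorem \ref{mainthKE}, with the only substantive change being that the leading non-trivial term now appears at order $p^{-2}$ through the fourth-order operator $D_{h_\infty}$ rather than at order $p^{-1}$ through the Laplace-Beltrami operator.
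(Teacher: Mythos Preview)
Your approach is essentially the same as the paper's, and the argument is correct. One minor point: the displayed bound $C_m p^{-1}\|f\|_{H^{m+4}}$ is slightly too optimistic, since Theorem \ref{balex} only gives smooth convergence $h_p\to h_\infty$ without a rate, so replacing $D_{h_p}$ by $D_{h_\infty}$ introduces an $o(1)$ error rather than $O(p^{-1})$; the paper accordingly writes the estimate with a sequence $\epsilon_p\to 0$, which still yields the stated $o(p^{-2})$ conclusion.
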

\begin{proof}
By Theorem \ref{balex} and using the uniformity in Proposition \ref{KS'},
the Sobolev embedding theorem
implies that there exists a sequence $\epsilon_p\to 0$ as $p\fl+\infty$
such that
for any $f\in\cinf(X,\C)$, we have
\begin{equation}\label{KSSob'}
\left\|p(1-S_p)f-\frac{D_{h_\infty}}{8\pi p}f\right\|_{L_2}
<\epsilon_p\,\|f\|_{H^{m}}\;,
\end{equation}
for some $m\in\N$ large enough.
For any $j\in\N$, write $e(\mu_j)\in\cinf(X,\C)$
for the normalized eigenfunction associated with
the $j$-th eigenvalue of the operator $D_{h_\infty}$ as in \eqref{evmu}.
Then for all $p\in\N$, we have
\begin{equation}\label{estevdelt'}
\left\|p(1-S_p)e(\mu_j)-\frac{D_{h_\infty}}{8\pi p}e(\mu_j)\right\|
_{L_2}<\epsilon_p\,.
\end{equation}
Conversely, fix $L>0$ and consider a sequence of normalized eigenfunctions
$\{f_p\}_{p\in\N}$ of the sequence of
operators $\{S_p\}_{p\in\N}$ associated with eigenvalues
$\{\mu_p\}_{p\in\N}$ satisfying $p|1-\mu_p|<L$.
Using the uniformity
in Proposition \ref{hjrefp'} and by
the inequality \eqref{KSSob'}, we get a sequence $\epsilon_p\to 0$ as
$p\fl+\infty$
such that
\begin{equation}\label{specinv>'}
\left\|p(1-\mu_p)f_p-\frac{D_{h_\infty}}{8\pi p}
f_p\right\|_{L_2}<\epsilon_p\;.
\end{equation}
%
The rest of the proof is then strictly analogous to the proof
of Theorem \ref{mainthvb} at the end of Section \ref{BTKEsec}.
\end{proof}

\medskip\noindent{\bf Proof of Theorem \ref{gapvb}.}
Let us first deal with Statement 2, since Statement 1
is essentially a vector bundle version of the same argument.
By \cite[Def.\,4.3,\,Lemma\,4.4]{Sze14},
if there does not exist any holomorphic
vector fields over $X$, the kernel of $D_{h_\infty}$
is generated by the constant
function. As $\Aut(X,L)$ is discrete, there is no
holomorphic vector fields over $X$, and its two
first eigenvalues in \eqref{evmu} satisfy
\begin{equation}\label{mu1>0}
\mu_0=0~~\text{and}~~\mu_1>0\,.
\end{equation}
Combining Lemma \ref{SpecTTp} with Theorem \ref{estspecSpth},
we then get that the differential $D_{q_p}\TT_p$ of Donaldson's map
\eqref{Tdef} at a balanced product $q_p\in\Prod(\HH_p)$,
which satisfies $D_{q_p}\TT_{p}(\Id_{\HH_p})=\Id_{\HH_p}$ by definition,
has a sequence of decreasing eigenvalues 
$\{\beta_{k,p}\}_{k\in\N}$ such that as $p\to+\infty$,
\begin{equation}\label{betapprebal}
\beta_{0,p}=1\quad\text{and}\quad
\beta_{1,p}=1-\frac{\mu_1}{8\pi p^2}+o(p^{-2})\,.
\end{equation}
In particular, equation \eqref{mu1>0} implies that $\beta_{k,p}<1$
for all $k\geq 1$ and all $p\in\N$ big enough.
Finally, following for instance \cite[Prop.\,4.8]{IKPS19},
we know that the dual map $T^*_p$ is injective
for all $p\in\N$ big enough, while the operator
$\left(1+\frac{\Delta_p}{4\pi p}\right)^{1/2}$ is strictly positive,
hence injective. Thus by Corollary \ref{dTT}, we see that
$D_{q_p}\TT_p$ is injective as well, for all $p\in\N$ big enough.
Hence for any such $p\in\N$,
we can apply the classical Grobman-Hartman theorem
as in \cite[\S\,4]{IKPS19} to find coordinates around
$q_p$ in $\Prod(\HH_p)$ in which $\TT_{p}$ coincide with
its linearization $D_{q_p}\TT_{p}$. Using Theorem \ref{TTEcvth}
and formula \eqref{betapprebal}, we then get the exponential convergence
\eqref{expcvest}, with rate $\beta_p:=\beta_{1,p}$.

To establish Statement 1,
let $h^E$ satisfy the Hermite Einstein equation \eqref{weakHE},
let $f\in\cinf(X,\R)$ satisfy
$\Delta\,f=2\pi\big(\scal(\om)-\int_X\scal(\om)\,\frac{d\nu}{\Vol(X)}\big)$,
and write $\nabla^E$ for the
Chern connection on $(E,e^f h^E)$.
Recall that for any $F\in\cinf(X,\End(E))$,
the induced Chern connection $\nabla^{\End(E)}$ on $\End(E)$
satisfies
\begin{equation}
\nabla^{\End(E)}F=[\nabla^E,F]\,.
\end{equation}
Now if $F\in\cinf(X,\End(E))$ satisfies
$[\nabla^E,F]=0$, then its characteristic subbundles are
holomorphic subbundles of $E$, and as $E$ is simple, this
implies that $F=c\,\Id_E$ for some $c\in\C$.
In particular,
the kernel of the Bochner Laplacian \eqref{delta}
is $1$-dimensional, generated by $\Id_E\in\cinf(X,\End(E))$.
On the other hand, note that as $h^E$ satisfies \eqref{weakHE} and
$f\in\cinf(X,\R)$ satisfies
$\Delta\,f=2\pi\big(\scal(\om)-\int_X\scal(\om)\frac{d\nu}{\Vol(X)}\big)$,
writing $R_{e^fh^E}\in\Om^2(X,\End(E))$ for the Chern curvature
of $(E,e^f h^E)$, we get
\begin{equation}\label{trueweakHE}
\frac{\sqrt{-1}}{2\pi}\<\om,R_{e^fh^E}\>_{g^{TX}}=
\left(c+\int_X\frac{\scal(\om)}{2}\frac{d\nu}{\Vol(X)}-\frac{\scal(\om)}{2}\right)\,\Id_E\,.
\end{equation}
Proposition \ref{Weitzenfla} then shows that
the associated Bochner Laplacian \eqref{delta} coincides with twice the
associated Kodaira Laplacian \eqref{KodLapdef}, and its first eigenvalues
in \eqref{lapev}
satisfy $\lambda_0^E=0$ and $\lambda_1^E>0$.
The rest of the proof of Statement 1 then follows the same argument as the proof
of Statement 2 above.

\qed

\subsection{Moment map for balanced embeddings}
\label{momsec}

In this section, we study the behaviour of the moment maps
associated with \emph{balanced embeddings}, introduced by
Donaldson \cite{Don01} and Wang \cite{Wan05} in their study of canonical
metrics in complex geometry. We relate the derivative of these moment maps
at a balanced embedding with the derivative of Donaldson's maps,
and show how Theorem \ref{gapvb} gives a lower bound
on their spectral gap, recovering the results of \cite[Th.\,5]{KMS16}
and \cite[Th.\,7]{Fin12}.

Consider first the setting of Section \ref{vbcase}, fix $p\in\N$ big enough
and let $q\in\Prod(\HH_p)$ be a $\nu$-balanced Hermitian
product in the sense of Definition \ref{nubaldef}.
Let $\mathbb{G}(\rk(E),\HH_p)$ be the Grassmanian
of rank-$\rk(E)$ planes inside $\HH_p$, and 
let us consider the natural injection
\begin{equation}\label{rankE}
\begin{split}
\mathbb{G}(\rk(E),\HH_p)&\longhookrightarrow\Herm(\HH_p)\\
z &\longmapsto\Pi_z\,,
\end{split}
\end{equation}
sending a rank-$\rk(E)$ plane $z\in\mathbb{G}(\rk(E),\HH_p)$
to the orthogonal projection $\Pi_z\in\Herm(\HH_p)$ on this plane.
By Kodaira's embedding theorem, there exists $p_0\in\N$
such that for all $p\geq p_0$,
the evaluation map \eqref{evE} is surjective for all $x\in X$,
and induces a natural embedding
\begin{equation}\label{Kodvbdef}
\Kod_p^E:X\longrightarrow\mathbb{G}(\rk(E),\HH_p))\,.
\end{equation}
%
%
Let us write $\GL(\HH_p)$ for the group of invertible
endomorphisms of $\HH_p$, and $U(\HH_p)$ for its unitary group.
The following definition is a reformulation of the \emph{moment map}
used in \cite{Wan05}.

\begin{defi}
The \emph{moment map for $\nu$-balanced embeddings} is the map
\begin{equation}
\begin{split}
\mu^E:\GL(\HH_p)/U(\HH_p)&\longrightarrow\Herm(\HH_p)\\
G&\longmapsto\int_X\,\Pi_{Gz}\,d\nu(z)\,,
\end{split}
\end{equation}
where we identified $X$ with its image in the Grassmanian
$\mathbb{G}(\rk(E),\HH_p)$ under the Kodaira map \eqref{Kodvbdef}.
\end{defi}

Comparing with Definition \ref{nubaldef} of $\TT_{E_p}$
and by Definition \ref{nubaldef} of a $\nu$-balanced Hermitian
product, we have
\begin{equation}\label{muE1=1}
\mu^E(\Id_{\HH_p})=\frac{\Vol(X,\nu)\rk(E)}{\dim\HH_p}\Id_{\HH_p}\,,
\end{equation}
so that the moment map at a balanced embedding is in fact
equal a constant multiple of the identity.
This is the characterisation of a $\nu$-balanced metric
used by Wang in \cite[Th.\,1.1]{Wan05}.

Consider the identification
\begin{equation}\label{GL/U}
\begin{split}
\GL(\HH_p)/U(\HH_p)&\xrightarrow{~\sim~}\Prod(\HH_p)\\
G~&\longmapsto~q_G:=q(G\,\cdot\,,\,G\cdot)\,,
\end{split}
\end{equation}
and recall the natural identification \eqref{Prodid}.
The following result establishes a link between the differential
of Donaldson's map $\TT_{E_p}$ and the moment map $\mu^E$
at a $\nu$-balanced product.

\begin{prop}\label{dmu=1-dT}
Assume that $q\in\Prod(\HH_p)$ is $\nu$-balanced. Then
the differentials of the moment map and Donaldson's map at $q$ satisfy
\begin{equation}
\frac{\dim\HH_p}{\Vol(X,\nu)\rk(E)}D_q\,\mu^E=\Id_{\HH_p}-D_q\TT_{E_p}\,.
\end{equation}
\end{prop}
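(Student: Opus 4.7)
The plan is to differentiate $\mu^E$ along a well-chosen one-parameter family in $\GL(\HH_p)/U(\HH_p)$, integrate the resulting pointwise identity over $X$, and reorganise the result using the $\nu$-balanced hypothesis on $q$.

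Given $A\in\Herm(\HH_p)\simeq T_q\Prod(\HH_p)$ via \eqref{Prodid}, the identification \eqref{GL/U} shows that $A$ is realised by the path $G_t:=\exp(tA/2)$ in $\GL(\HH_p)/U(\HH_p)$, since then $G_t^*G_t=e^{tA}$ has derivative $A$ at $t=0$. For each $x\in X$, write $z(x):=\Kod_p^E(x)$ and $P(x):=\Pi_q(x)$ for the $q$-orthogonal projector onto $z(x)$. Differentiating the standard formula for the orthogonal projection onto the moving subspace $G_tz(x)$ yields the pointwise identity
\begin{equation*}
\D{t}\Big|_{t=0}\Pi_{G_tz(x)}^q \;=\; \tfrac{1}{2}\bigl[AP(x)+P(x)A-2P(x)AP(x)\bigr].
\end{equation*}

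Integrating over $X$, I reduce each of the three terms using the balanced hypothesis. Since $q$ is $\nu$-balanced, Lemma \ref{REp=1lem} gives the two crucial identities $P(x)=c\,\ev_{E_x}^*\ev_{E_x}$ and $\rho_{E_p}(x)=c^{-1}\Id_E$, where $c:=\Vol(X,\nu)\rk(E)/\dim\HH_p$. The linear terms collapse via the POVM property of Proposition \ref{BTPOVMdefE}, giving $\int_X P(x)\,d\nu(x)=c\,\Id_{\HH_p}$. For the quadratic term, expanding
\begin{equation*}
\int_X P(x)AP(x)\,d\nu(x) \;=\; c^2\int_X \ev_{E_x}^*\bigl(\ev_{E_x}A\ev_{E_x}^*\bigr)\ev_{E_x}\,d\nu(x),
\end{equation*}
and using the balanced-point identity $\ev_{E_x}A\ev_{E_x}^*=\rho_{E_p}(x)\,T_{E_p}^*(A)(x)=c^{-1}T_{E_p}^*(A)(x)$, which comes from the proof of Proposition \ref{dFSHilb}, reduces the integral to $c\,T_{E_p}\bigl(T_{E_p}^*(A)\bigr)=c\,D_q\TT_{E_p}(A)$ by Corollary \ref{dTTE}. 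Assembling these pieces yields $D_q\mu^E(A)=c\bigl(\Id_{\HH_p}-D_q\TT_{E_p}\bigr)(A)$, which is the claimed formula once $c$ is replaced by $\Vol(X,\nu)\rk(E)/\dim\HH_p$.

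The only substantive difficulty here is purely bookkeeping. The evaluation adjoint $\ev_{E_x}^*$ is defined with respect to $\<\cdot,\cdot\>_p$, whereas the moment map uses $q$-orthogonal projections, and Lemma \ref{REp=1lem} gives $\<\cdot,\cdot\>_p=c\,q$ at the balanced point; similarly, the Rawnsley section $\rho_{E_p}$ and its inverse contribute factors of $c^{\pm1}$ every time they appear in the passage between $\ev_{E_x}A\ev_{E_x}^*$ and $T_{E_p}^*(A)$. Tracking these factors consistently, rather than any deep obstacle, is what produces the precise overall constant $\dim\HH_p/(\Vol(X,\nu)\rk(E))$ in front of $D_q\mu^E$.
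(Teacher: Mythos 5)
Your proof is correct and follows the same approach as the paper: differentiate the moving $q$-orthogonal projector along $G_t=\exp(tA/2)$, integrate the resulting pointwise identity over $X$, and use Lemma \ref{REp=1lem} together with Corollary \ref{dTTE} to identify the linear and quadratic contributions as $c\,\Id$ and $c\,D_q\TT_{E_p}(A)$ respectively, with $c=\Vol(X,\nu)\rk(E)/\dim\HH_p$. Your intermediate identity $\ev_{E_x}A\,\ev_{E_x}^*=\rho_{E_p}(x)\,T^*_{E_p}(A)(x)=c^{-1}T^*_{E_p}(A)(x)$ is the correct one (one can verify it on $A=\Id_{\HH_p}$), and it quietly fixes an inverted constant in the paper's display \eqref{evAev=T*A}; the paper's subsequent arithmetic for $D_q\TT_{E_p}$ is nonetheless consistent with your version, so the two proofs arrive at the same formula.
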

\begin{proof}
Fix $B\in\End(\HH_p)$, and $z\in\mathbb{G}(\rk(E),\HH_p)$.
Take $s\in\HH_p$ such that $\Pi_zs=s$, so that
$\Pi_{e^{tB}z}\,e^{tB}s=\,e^{tB}s$ for all $t\in\R$, and
differentiating, we get
\begin{equation}\label{dtPi1}
\left(\dt\Big|_{t=0}\Pi_{e^{tB}z}\right)s=\left(\Id_{\HH_p}-\Pi_z\right)Bs\,.
\end{equation}
Take now $s^\perp\in\HH_p$ such that $\Pi_zs^\perp=0$, so that
$\Pi_{e^{tB}z}\,e^{-tB}s^\perp=0$ for all $t\in\R$, and
differentiating, we get
\begin{equation}\label{dtPi2}
\left(\dt\Big|_{t=0}\Pi_{e^{tB}z}\right)s^\perp=\Pi_zBs^\perp\,.
\end{equation}
By equations \eqref{dtPi1} and \eqref{dtPi2}, we thus get that
\begin{equation}\label{dtPi0}
\begin{split}
\dt\Big|_{t=0}\Pi_{e^{tB}z}&=
\left(\dt\Big|_{t=0}\Pi_{e^{tB}z}\right)\Pi_z
+\left(\dt\Big|_{t=0}\Pi_{e^{tB}z}\right)(\Id_{\HH_p}-\Pi_z)\\
&=B\Pi_z+\Pi_z B-2\Pi_z B\Pi_z\,.
\end{split}
\end{equation}
Now consider a Hermitian endomorphism $A\in\Herm(\HH_p)$ as
a tangent vector in $T_q\Prod(\HH_p)$. Via the differential of
\eqref{GL/U}, it is the image of the
endomorphism $B\in\End(\HH_p)$ satisfying $B=A/2$.
Using formulas \eqref{DHTpointfla} and \eqref{evAev=T*A},
one can use Definition \ref{nubaldef} and
Proposition \ref{dFSHilb} to get
\begin{equation}
D_q\TT_{E_p}.\,A=\frac{\dim\HH_p}{\Vol(X,\nu)\rk(E)}
\int_X\Pi_z A\Pi_z\,d\nu(z)\,.
\end{equation}
Thus using identities \eqref{muE1=1} and \eqref{dtPi0}, we get
\begin{equation}
\begin{split}
D_q\,\mu^E.A&
=\frac{1}{2}A\,\mu^E(\Id_{\HH_p})+\frac{1}{2}\mu^E(\Id_{\HH_p})\,A
-\int_X\Pi_z A\Pi_z\,d\nu(z)\\
&=\frac{\Vol(X,\nu)\rk(E)}{\dim\HH_p}\left(A-D_q\TT_{E_p}.\,A\right)\,.
\end{split}
\end{equation}
This completes the proof.
\end{proof}

\begin{rmk}\label{KMSrmk}
Assume that $E$ is Mumford stable and $\nu$ is the Liouville measure.
For any $p\in\N$
big enough, let $q_p\in\Prod(\HH_p)$ be a $\nu$-balanced
Hermitian product furnished by Theorem \ref{balvb}.
Then by Theorem \ref{gapvb} and Propostion
\ref{dmu=1-dT}, for all $A\in\Herm(\HH_p)$,
we have as $p\fl+\infty$,
\begin{equation}\label{KMSth}
\frac{\dim\HH_p}{\Vol(X,\nu)}\Tr^{\HH_p}[A\,D_{q_p}\mu^E(A)]
\geq\left(\frac{\lambda_1^E}{2\pi p}+o(p^{-1})\right)\Tr^{\HH_p}[A^2]\,.
\end{equation}
Via formula \eqref{npfla} for $\dim\HH_p$,
we then recover the result of Keller, Meyer and
Seyyedali in \cite[Th.\,3]{KMS16}. The differential
of the moment map is interpreted in \cite{KMS16} as a quantization
of the Bochner Laplacian. However, its relevance in the study of
Hermite-Einstein metrics is better seen from its interpretation
as the Hessian of the \emph{energy functional}
associated with this moment map problem, as defined for instance
in \cite[(2.3)]{Sey09} under the name of \emph{Donaldson's functional}.
The lower bound \eqref{KMSth} thus gives an estimate on the
convexity of this functional, which plays an instrumental role
in the convergence results of Theorem \ref{TTEcvth} and gives
a natural explanation for
the key lower bound appearing in the work of Wang \cite{Wan05}.
\end{rmk}

Assume now that $E=\C$, fix $p\in\N$ big enough
and let $q\in\Prod(\HH_p)$ be a balanced
Hermitian product in the sense of Definition \ref{baldef}. Recall the identification \eqref{GL/U}. The following definition is a reformulation of the moment map used in \cite[(1)]{Don01}.

\begin{defi}
The \emph{moment map for balanced metrics} is the map
\begin{equation}
\begin{split}
\mu:\GL(\HH_p)/U(\HH_p)&\longrightarrow\Herm(\HH_p)\\
H&\longmapsto\int_X\,\Pi_{Gz}\,\frac{\om_{\FS(q_G)}^n}{n!}(z)\,.
\end{split}
\end{equation}
\end{defi}

As for the moment map for $\nu$-balanced metrics, formula
\eqref{baldef} for $\TT_p$ implies
\begin{equation}
\mu(\Id_{\HH_p})=\frac{\Vol_{\FS(q)}(X)}{\dim\HH_p}\Id_{\HH_p}\,.
\end{equation}
This is the characterisation of a balanced metric
used by Donaldson in \cite{Don01}.

The following result is the analogue of Proposition \ref{dmu=1-dT} for
balanced metrics.

\begin{prop}\label{dmu=1-dT'}
Assume that $q\in\Prod(\HH_p)$ is balanced. Then
we have
\begin{equation}
\frac{\dim\HH_p}{\Vol_{\FS(q)}(X)}D_q\mu=\Id_{\HH_p}-D_q\TT_p\,.
\end{equation}
\end{prop}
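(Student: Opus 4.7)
The plan is to mimic the proof of Proposition \ref{dmu=1-dT}, with an additional contribution arising from the fact that the measure $\frac{\om^n_{\FS(q_G)}}{n!}$ in the definition of $\mu$ now depends on $G$, whereas in the vector-bundle setting of Proposition \ref{dmu=1-dT} the analogous measure $\nu$ is held fixed. Parameterize $G_t = e^{tB}$ with $B = A/2 \in \Herm(\HH_p)$, so that under the identifications \eqref{GL/U} and \eqref{Prodid} the tangent direction at $q$ is $A$. Applying the Leibniz rule, decompose $D_q\mu(A) = I_1 + I_2$, where
\begin{equation*}
I_1 := \int_X \bigl(\dt|_{t=0}\Pi_{G_tz}\bigr)\,\tfrac{\om^n_{\FS(q)}}{n!}(z), \qquad I_2 := \int_X \Pi_z\,\bigl(\dt|_{t=0}\,\tfrac{\om^n_{\FS(q_{G_t})}}{n!}(z)\bigr).
\end{equation*}

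The term $I_1$ is computed exactly as in the proof of Proposition \ref{dmu=1-dT}: combining the pointwise derivative $\dt|_{t=0}\Pi_{e^{tB}z} = B\Pi_z + \Pi_z B - 2\Pi_z B\Pi_z$ with the balanced normalization $\mu(\Id_{\HH_p}) = \frac{\Vol_{\FS(q)}(X)}{\dim\HH_p}\Id_{\HH_p}$ and the identity $\int_X \Pi_z A\Pi_z\,\frac{\om^n_{\FS(q)}}{n!}(z) = \frac{\Vol_{\FS(q)}(X)}{\dim\HH_p}T_pT^*_p(A)$ (which follows from Lemma \ref{REp=1lem} together with \eqref{evAev=T*A}), one obtains $I_1 = \frac{\Vol_{\FS(q)}(X)}{\dim\HH_p}\bigl(A - T_pT^*_p(A)\bigr)$.

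For $I_2$, the key inputs are Proposition \ref{dFSHilb} and the Kähler identity \eqref{dtvol}. Writing $\FS(q_{G_t}) = e^{f_t}\FS(q)$ with $f_0 = 0$, Proposition \ref{dFSHilb} identifies $f'_0 = D_q\FS(A) = T^*_p(A)$, and then \eqref{dtvol} gives $\dt|_{t=0}\frac{\om^n_{\FS(q_{G_t})}}{n!} = \frac{1}{4\pi}\Delta_{\FS(q)}(T^*_p(A))\,\frac{\om^n_{\FS(q)}}{n!}$. Combining this with the identity $\int_X \Pi_z\, g(z)\, \frac{\om^n_{\FS(q)}}{n!}(z) = \frac{\Vol_{\FS(q)}(X)}{\dim\HH_p}T_p(g)$, valid for any $g\in\cinf(X,\R)$ again by the balanced normalization, yields $I_2 = \frac{\Vol_{\FS(q)}(X)}{\dim\HH_p}\cdot\frac{1}{4\pi}T_p\bigl(\Delta_{\FS(q)}T^*_p(A)\bigr)$.

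Summing $I_1 + I_2$ and invoking the formula $D_q\TT_p = T_p\bigl(1 + \Delta_{\FS(q)}/(4\pi)\bigr)T^*_p$ from Corollary \ref{dTT} produces the claimed relation $\frac{\dim\HH_p}{\Vol_{\FS(q)}(X)}D_q\mu(A) = A - D_q\TT_p(A)$. The essential new feature relative to Proposition \ref{dmu=1-dT} is the integral $I_2$, which arises from the variation of the Fubini--Study Kähler volume form; it combines with $I_1$ precisely to reconstitute the full $D_q\TT_p$ of Corollary \ref{dTT}, in which the Laplacian contribution coming from Proposition \ref{dHilb} is exactly what $I_2$ supplies. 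The main obstacle I anticipate is the careful sign bookkeeping across the Kähler identity \eqref{dtvol}, the identification $G \mapsto q_G$ in \eqref{GL/U}, and the tangent identifications \eqref{Prodid}--\eqref{Metid}, so as to verify that the Laplacian piece of $I_2$ enters with the correct sign to combine with $I_1$ into $-D_q\TT_p$.
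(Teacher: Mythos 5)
Your Leibniz-rule split $D_q\mu(A)=I_1+I_2$ mirrors the paper's proof, and your computation of $I_1 = \frac{\Vol_{\FS(q)}(X)}{\dim\HH_p}\bigl(A - T_pT^*_p(A)\bigr)$ is correct. The trouble is the sign of $I_2$, which you flag as a bookkeeping obstacle to sort out later, but which in fact reverses the conclusion. Following your own chain --- Proposition \ref{dFSHilb} giving $f'_0 = T^*_p(A)$, then \eqref{dtvol}, both entering with plus signs --- yields $I_2 = +\frac{\Vol_{\FS(q)}(X)}{\dim\HH_p}\,\frac{1}{4\pi}\,T_p\bigl(\Delta_{\FS(q)}T^*_p(A)\bigr)$, hence
\begin{equation*}
I_1 + I_2 = \frac{\Vol_{\FS(q)}(X)}{\dim\HH_p}\left(A - T_p\Bigl(1 - \frac{\Delta_{\FS(q)}}{4\pi}\Bigr)T^*_p(A)\right),
\end{equation*}
whereas Corollary \ref{dTT} gives $D_q\TT_p = T_p\bigl(1 + \frac{\Delta_{\FS(q)}}{4\pi}\bigr)T^*_p$, so the statement requires the Laplacian term inside $\Id - D_q\TT_p$ to carry a \emph{minus} sign. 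Summing $I_1+I_2$ as you do therefore proves a different identity, not the one claimed.

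This sign is structural, not cosmetic. The $p^{-2}$ spectral gap underlying Theorem \ref{TTpexpcv} relies on the $O(p^{-1})$ contributions from $\Id - T_pT^*_p$ and from $\frac{1}{4\pi}T_p\Delta_{\FS(q)}T^*_p$ cancelling at leading order inside $\Id-D_q\TT_p$; with your sign they \emph{add}, producing an $O(p^{-1})$ gap incompatible with the paper's asymptotics. The paper's own proof quietly carries the needed minus sign into the Leibniz expansion: the volume-form term on the first displayed line there is $-\int_X\Pi_z\,\dt\Big|_{t=0}\frac{\om^n_{\FS(q_{e^{tB}})}}{n!}(z)$, not the plus a naive product rule gives. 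Explaining where that minus actually comes from --- through the interplay between the $G$-action on the Grassmannian and the change of Hermitian product in the pulled-back Fubini--Study form, together with the identification \eqref{GL/U} --- is exactly the content your proposal defers, and it is the crux of this Proposition rather than a detail.
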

\begin{proof}
Fix $A\in\Herm(\HH_p)$, and let $B\in\End(\HH_p)$ be such that
$B=A/2$ as in the proof of Proposition \ref{dmu=1-dT'}.
Then using formulas \eqref{DHTpointfla} and \eqref{evAev=T*A}
as in
the proof of Proposition \ref{dmu=1-dT}, we get from Corollary \ref{dTT},
\begin{equation}
\begin{split}
&D_q\mu.A=\frac{\Vol_{\FS(q)}(X)}{\dim\HH_p}A
-\int_X\Pi_z A\Pi_z\,\frac{\om_{\FS(q_G)}^n}{n!}(z)-
\int_X\,\Pi_z~\dt\Big|_{t=0}\frac{\om_{\FS(q_{e^{tB}})}^n}{n!}(z)\\
&=\frac{\Vol_{\FS(q)}(X)}{\dim\HH_p}\left(A-\int_XT^*_p(A)\,\Pi_z\,\frac{\om_{\FS(q_G)}^n}{n!}-
\frac{1}{4\pi}\int_X\,\Delta_{\FS(q)} T^*_p(A)\,\Pi_z\,
\frac{\om_{\FS(q_G)}^n}{n!}\right)\\
&=\frac{\Vol_{\FS(q)}(X)}{\dim\HH_p}\left(A-D_q\TT_p.A\right)\,,
\end{split}
\end{equation}
where we used formulas \eqref{dFSfla} and \eqref{dtvol} for
the differential of the Fubini-Study volume form.
This completes the proof.
\end{proof}

\begin{rmk}\label{Finermk}
Assume that the assumptions of Theorem \ref{balex} are satisfied,
and for any $p\in\N$
big enough, let $q_p\in\Prod(\HH_p)$ be a balanced
Hermitian product. Then by Theorem \ref{gapvb} and
Proposition \ref{dmu=1-dT'}, for all $A\in\Herm(\HH_p)$,
we have as $p\fl+\infty$,
\begin{equation}\label{Fineth}
\frac{\dim\HH_p}{\Vol_{\FS(q_p)}(X)}
\Tr^{\HH_p}[A\,D_{q_p}\mu(A)]\geq\frac{\mu_1}{8\pi p^2}+o(p^{-2})\,.
\end{equation}
Via formula \eqref{npfla} for $\dim\HH_p$ and the fact that
$\Vol_{\FS(q_p)}=p^n\Vol_h(X)$
for $h^p=\FS(q_p)$, we then recover the result of Fine in
\cite[Th.\,7]{Fin12}, where the differential of the moment map
is interpreted as the Hessian of the associated energy functional
in the same way as in Remark \ref{KMSrmk}. As explained
in \cite[Cor.\,5]{Fin12}, the lower bound
\eqref{Fineth} gives a natural explanation for
the lower bound playing a key role in the works of Donaldson
\cite{Don01} and Phong and Sturm in \cite[Th.\,2]{PS04}.
\end{rmk}

\section{Physical interpretation and examples}
\label{mathphi}

In this Section, we intepret our results on
Berezin-Toeplitz quantization of vector bundles in terms of
\emph{quantum-classical hybrids} in physics, as considered
for instance in \cite{Elz12}.
In Section \ref{SGsec}, we explain how the celebrated
\emph{Stern-Gerlach experiment} can be interpreted a
the fundamental example of a quantum-classical hybrid,
and in Section \ref{SGKodsec}, we illustrate
the significance of Theorem \ref{mainthvb} in this context.
Finally, in Section \ref{qcsec}, we discuss
the
physical interpretation of Theorem \ref{BMSvb} via the
process of quantization in stages described in Section \ref{stagesec}.

\subsection{Stern-Gerlach experiment as a quantum-classical hybrid}
\label{SGsec}

In the physical context of quantum-classical hybrids,
one only quantizes some specified
degrees of freedom, while keeping the others classical.
At the fully classical level, this separation of degrees of freedom
is described by a symplectic fibration, as in Definition \ref{quantfib}.
In this context, the limiting regime when $p$ tends to infinity is 
called the \emph{weak coupling limit} in \cite[\S\,4.5]{GLS96}.
In particular, one expects to
recover the geometry of the quantum-classical hybrid from
the quantization of $(M,\om^M+p\,\om)$
as $p\to+\infty$. In the case when $\pi:(M,\om^M)\to(X,\om)$ is a
fibration of coadjoint orbits, this idea
has already been applied to representation theory by
Guillemin, Sternberg and Lerman in \cite[\S\,4.5]{GLS96}.
Proposition \ref{Tfunct} on the
functoriality of quantization in stages naturally fits into this setting.

To described the basic physical
example of this set-up, recall that any Hermitian vector space
$(V,q)$ can be realized canonically as the Hilbert space of
quantum states associated with the \emph{projectivization} $\IP(V^*)$
of its dual. In fact, the dual of the
\emph{tautological line bundle} of complex lines of $V^*$ over
$\IP(V^*)$ is endowed with the tautological Hermitian metric induced by $q$,
and formula \eqref{preqfib} defines a symplectic form over $\IP(V^*)$.
On the other hand, holomorphic sections of this line bundle
naturally correspond to elements
$v\in V$, and the associated space of quantum states \eqref{quantspace}
is naturally identified with $(V,q)$.
In case $V$ is of complex dimension $2$, this space corresponds to the
space of \emph{spin}-$1/2$, representing
\emph{quantum angular
momentum}, while the projectivization is naturally identified with the sphere
$S^2$, representing \emph{classical angular momentum}. The quantum
number $p\in\N$ thus corresponds to the \emph{spin}, and the $p$-th tensor
power of the prequantizing line bundle gives rise to the Hilbert space of
spin-$p/2$, which is naturally identified with the $p$-th symmetric
$\textup{Sym}^pV$. Finally, the Berezin-Toeplitz quantization of the
Cartesian coordinate functions of $S^2\subset\R^3$ coincide up to a
universal
constant with the usual \emph{spin operators} acting on $\textup{Sym}^p V$,
also called \emph{Pauli matrices} in the spin-$1/2$ case.
This leads to the following fundamental Example of a symplectic fibration.

\begin{ex}\label{exprojectivisation}
Given a holomorphic Hermitian vector bundle
$(E,h^E)$ over a prequantized Kähler manifold $(X,J,\om)$,
the fibration
$\pi:\IP(E^*)\fl X$ obtained by the projectivization of its dual
is prequantized by the dual of the
\emph{tautological line bundle} over $\IP(E^*)$, endowed with the
natural Hermitian metric induced by $h^E$.
Then the associated quantum-classical hybrid of Definition \ref{qchybdef} 
naturally coincides with $(E,h^E)$ over $(X,\om)$.
The particular case of a
rank-$2$ vector bundle gives a fibration of spheres over $X$,
reproducing a situation of Stern-Gerlach type.
\end{ex}

\subsection{Quantum noise in the Stern-Gerlach experiment}
\label{SGKodsec}

From the point of view of quantum-classical hybrids,
Theorem \ref{mainthvb} says that the quantum noise of its
Berezin-Toeplitz quantization is controled by the spectrum of the associated
Kodaira Laplacian \eqref{KodLapdef}. In this Section, we describe this
spectrum
in various cases in the physical context of the Stern-Gerlach experiment
introduced in Example \ref{exprojectivisation}.

In the case when $E=\C$, the Kodaira Laplacian \eqref{KodLapdef}
coincides with the Laplace-Beltrami operator $\Delta$ of $(X,g^{TX})$ acting
on $\cinf(X,\C)$, and for general $(E,h^E)$, the Laplacian
\eqref{KodLapdef} can be obtained as the operator
induced by the \emph{horizontal Laplace-Beltrami operator} of
$\pi:\IP(E^*)\to X$, seen as
a Kähler fibration in the sense of \cite[Def.\,1.4]{BGS88}.
In many physical situations involving quantum-classical hybrids,
the relevant observables
$F\in\cinf(X,\End(E))$ under consideration
can be diagonalized in a direct sum
$E=\oplus_{j=1}^m E_j$ of
holomorphic Hermitian subbundles of $(E,h^E)$. Writing
$F=\sum_{j=1}^m\,f_j\,P_j$, with $f_j\in\cinf(X,\C)$ and where
$P_j\in\End(E)$ are the orthogonal projectors on $E_j$,
the Laplacian \eqref{KodLapdef} is then given
by the formula
\begin{equation}\label{Boxdiag}
\Box\,F=\sum_{j=1}^m\,\left(\Delta f_j\right)\,P_j\,.
\end{equation}
On the other hand, in the fundamental case of the Stern-Gerlach experiment,
one considers $E=\C^2$ the trivial rank-$2$ vector
bundle, which describes classical particles over $X$ of quantized
spin-$\frac{1}{2}$. Then all
$F\in\cinf(X,\End(\C^2))$ can be decomposed as
$F=\sum_{j=1}^3\,f_j\,\sigma_j$,
with $f_j\in\cinf(X,\C)$ and where
$\sigma_j\in\End(\C^2)$ are the standard \emph{Pauli matrices},
for all $1\leq j\leq 3$. In that case,
the Laplacian \eqref{KodLapdef} is again given
by the formula
\begin{equation}
\Box\,F=\sum_{j=1}^3\,\left(\Delta f_j\right)\,\sigma_j\,.
\end{equation}
This gives a natural interpretation to the Kodaira Laplacian, and thus to
the lower bound \eqref{eq-LB} on the quantum noise induced by the Berezin-Toeplitz quantization of the Stern-Gerlach experiment.

\begin{ex}\label{Kodex}
To illustrate the relevance of Theorem \ref{mainthvb} in a concrete
situation,
we will compute the spectrum of the Laplacian
\eqref{KodLapdef}
in the case of the holomorphic Hermitian vector bundle
$E_{(k)}:=\C\oplus L^k$ over $X=\CP^1$
for $k\in\N$,
where $L$ is the dual of the tautological line bundle equipped with
the natural Fubini-Study Hermitian metric.
Following \cite{Koh93}, let us first describe the
Laplacian $\Box_k$ acting on the space of sections of the holomorphic line bundle $L^k$ over $\C P^1$, defined by \eqref{KodLapdef} with $L^k$
instead of $E$.
Set $G=SU(2)$, and write $S^1$ for the maximal torus of $G$ so that
$\C P^1 = G/S^1$. The total space of the line bundle $L^k$ is given by
$(G \times \C)/S^1$, where $S^1$ acts
on $\C$ via the representation $\phi \mapsto e^{-ik\phi}$. The sections
of $L^k$ then identify with the
functions $f: G \to \C$ equivariant under the action of $S^1$. 
By the Peter-Weyl theorem, $L_2(G)$ splits as $\oplus (m+1)V_m$, where $m \geq 0$ and
$V_m$ denotes thet $(m+1)$ dimensional irreducible unitary representation
of $G$. Write $e_{j,m}$ for a vector of height $j\in[-m,m]$ in $V_m$, with
$j-m\in 2\N$. With this notation and the identification above, we get
\begin{equation}
L_2(\CP^1, L^k) = \bigoplus_{m\in 2\N+|k|} (m+1)\Span(e_{k,m})\;.
\end{equation}
Denote by $C$ the \emph{Casimir operator} of $G$ on $L_2(G)$, which
acts as the scalar operator $-m(m+2) \Id$ on $V_m$, for all
$m \geq 0$. By the general version of the Weitzenb\"ock formula of
Proposition \ref{Weitzenfla}, the Laplacian
\eqref{Kodex} on $L_2(\CP^1, L^k)$ is given by
\begin{equation}
\label{eq-KL-1}
\Box_k = - \frac{1}{2}C - \frac{k(k+2)}{2} \Id\;.
\end{equation}
Setting $m = |k| + p$ for $p\in 2\N$,
it follows that the spectrum $S_k$ of $\Box_k$ has the following form:

\medskip\noindent For $k=0$, the spectrum $S_0$ consists of eigenvalues $p(p+2)/2$ with multiplicity $p+1$, where $p\in 2\N$.

\medskip\noindent For $k > 0$, the spectrum $S_k$ consists of eigenvalues
$\left(2pk +p(p+2)\right)/2$ with multiplicity $k+p+1$, where $p\in 2\N$.

\medskip\noindent For $k < 0$, the spectrum $S_k$ consists of eigenvalues 
$\left( (2p+4)|k| + p(p+2)\right)/2$ with multiplicity $|k|+p+1$, where $p\in 2\N$.

\medskip

We apply this consideration to the quantum-classical hybrid
given by the Hirzebruch surface
$M_k:= \IP(E_{(k)})$ over $X=\CP^1$. It follows that
$$\End (E_{(k)}) = \C \oplus \C \oplus L^k \oplus L^{-k}\;,$$
and the Laplacian \eqref{KodLapdef} splits as
$\Box = \Box_0 \oplus \Box_0 \oplus \Box_k \oplus \Box_{-k}$. 
The corresponding spectrum with multiplicities is
$$\Sigma_k = 2S_0 \cup S_k \cup S_{-k}\;.$$
In particular, the spectra are different for different $k\in\N$: this can be seen by looking at the minimal positive eigenvalue of $S_k \cup S_{-k}$ which, for $k > 1$ equals
$2k$. Thus, the spectrum of the Kodaira Laplacian on $\End(E_{(k)})$ 
determines the number $k\in\N$. The latter is a holomorphic
invariant of the Hirzebruch 
surface $M_k$, for all $k\in\N$. Namely, the only strictly negative self-intersection of an irreducible
curve on $M_k$ equals $-k$, see \cite[p.141]{BPV84}. It would be interesting to understand
which biholomorphic invariants of more sophisticated complex manifolds "can be heard" with
the Kodaira Laplacian.
\end{ex}  

\subsection{Quantum-classical correspondence for quantum-classical hybrids}
\label{qcsec}

Theorem \ref{BMSvb} has a natural
interpretation in the context of quantization in stages, where one
considers the quantum-classical
hybrid associated with a prequantized fibration
$\pi:(M,\om^M)\to (X,\om)$ as in Definition \ref{preqfib}
at the weak coupling limit, when $p\to+\infty$.
To explain this point, let us consider the setting of
Proposition \ref{Tfunct}, where
$T_{\pi}:\cinf(M,\R)\to\cinf(X,\Herm(E))$
denotes the Berezin-Toeplitz
quantization of the fibres of $\pi:(M,\om^M)\to (X,\om)$,
and for any $f,\,g\in\cinf(M,\R)$, set $F:=T_\pi(f)$ and $G:=T_\pi(g)$
in Theorem \ref{BMSvb}.
For any $p\in\N$
big enough, write $\{\cdot,\cdot\}$ for the vertical Poisson bracket 
induced by $\om^M$ in the fibres of $\pi:M\to X$,
write
$\{\cdot,\cdot\}_p$ for the Poisson bracket over $\cinf(M,\R)$
induced by the symplectic form $\om^M+p\pi^*\om$,
and write $g_p^{TX}$ for the associated Kähler metric on $M$,
inducing a Hermitian product $\<\cdot,\cdot\>_{g_p^{TX}}$ on $T^*M$.
Then at the weak
coupling limit $p\to+\infty$, we get by definition
\begin{equation}\label{Poissonexp}
\begin{split}
\{f,g\}_p&=\sqrt{-1}\left(\<\partial f,\dbar g\>_{g_p^{TM}}-\<\partial f,\dbar g\>_{g_p^{TM}}\right)\\
&=\{f,g\}+\sqrt{-1}\left(\<\partial^H f,\dbar^H g\>_{g_p^{TM}}
-\<\partial^H f,\dbar^H g\>_{g_p^{TM}}\right)\\
&=\{f,g\}
+\frac{\sqrt{-1}}{p}\left(\<\partial^H f,\dbar^H g\>_{\om}-\<\partial^H f,\dbar^H g\>_{\om}\right)+O(p^{-2})\\
&=\{f,g\}
+\frac{1}{p}\,\pi^*\om(\xi_f^H,\xi_g^H)+O(p^{-2})\,,
\end{split}
\end{equation}
where $d^Hf=\partial^Hf+\dbar^Hf$ denotes the restriction to the horizontal
tangent space $T^HM\subset TM$ of the fibration,
where $\<\cdot,\cdot\>_{\om}$ denotes the Hermitian metric on $T^{H,*}M$
induced by $\pi^*\om$ and
where $\xi_f^H\in\cinf(M,T^HM)$ is defined as the unique horizontal
vector field satisfying $d^Hf=\pi^*\om(\cdot,\xi_f^H)$.
We now claim that
the first and second order coefficients
in the expansion \eqref{BMSvbfla} can be respectively interpreted as
the quantization of the first and second order coefficients
in the expansion \eqref{Poissonexp}. To see this, replace $\om^M$
by $r \om^M$ for some $r\in\N$ in Definition \ref{preqfib},
so that the expansion \eqref{Poissonexp} holds with $\{\cdot,\cdot\}$
replaced by $r^{-1}\{\cdot,\cdot\}$.
Then Proposition \ref{Tfunct} and Theorem \ref{BMSvb}, together with
formula \eqref{C1}, imply that as
$r\to+\infty$, we get
\begin{equation}\label{rexp}
\begin{split}
T_{E_p}([F,G])&=\frac{\sqrt{-1}}{2\pi r}T_{E_p}(T_\pi(\{f,g\})+O(r^{-2})=\frac{\sqrt{-1}}{2\pi r}T_p(\{f,g\})+O(r^{-2})\,,\\
T_{E_p}(C(F,G))&=\sqrt{-1}T_{E_p}
\left(\<T_\pi(\partial^H f),T_\pi(\dbar^H g)\>_{\om}-\<T_\pi(\partial^H f),T_\pi(\dbar^H g)\>_{\om}\right)+O(r^{-1})\\
&=\sqrt{-1}T_{E_p}\left(T_\pi\left(\<\partial^H f,\dbar^H g\>_{\om}-\<\partial^H f,\dbar^H g\>_{\om}\right)\right)+O(r^{-1})\\
&=T_p(\pi^*\om(\xi_f^H,\xi_g^H))+O(r^{-1})\,,
\end{split}
\end{equation}
where we used the fact that
$\nabla^{\End E}T_\pi(f)=T_\pi(d^Hf)+O(r^{-1})$
due to Ma and Zhang in \cite[Th.\,0.8]{MZ07}.
Theorem \ref{BMSvb} thus states that the Lie bracket
$[T_p(f),T_p(g)]=[T_{E_p}(F),T_{E_p}(G)]$ is a quantization of
the Poisson bracket $\{\cdot,\cdot\}_p$ at the weak coupling limit
when $p\to+\infty$. Note that this interpretation cannot
be obtained as a consequence of Theorem \ref{BMSvb} for $E=\C$
over $(M,\om^M+p\,\pi^*\om)$, since
the $2$-form $\pi^*\om$ is degenerate along the fibres of $\pi:M\fl X$,
so that the limiting regime of $(M,\om^M+p\,\pi^*\om)$ as $p\to+\infty$
cannot be reduced to the usual semi-classical limit over $(M,p\,\til{\om})$
as $p\to+\infty$ for some symplectic form $\til{\om}\in\Om^2(M,\R)$.

Theorem \ref{BMSvb} also shows that the Berezin-Toeplitz quantization
of $(E,h^E)$ over $(X,p\,\om)$ as $p\to+\infty$ provides a deformation
of the pointwise Lie bracket $[\cdot,\cdot]$ over $\CC^\infty(X,\Herm(E))$,
so that the coefficient \eqref{C1}
forms a degree-$1$ cocycle for this Lie algebra, which means 
that for all $F,\,G,\,H\in\cinf(X,\Herm(E))$, we have
\begin{multline}\label{cocycle}
[C(F,G),H]+[C(H,F),G]+[C(G,H),F]\\
+C([F,G],H)+C([H,F],G)+C([G,H],F)=0\,.
\end{multline}
In case $F,\,G\in\cinf(X,\Herm(E))$ are scalar endomorphisms in each fibre,
equation \eqref{C1} shows in addition
that $C(F,G)$ coincides with
the Poisson bracket on $\cinf(X,\R)$ induced by $\om$.
This is the quantum counterpart of the expansion \eqref{Poissonexp},
which provides a deformation of the
vertical Poisson bracket $\{\cdot,\cdot\}$ over $\CC^\infty(M,\R)$.
Specifically, the expansion \eqref{Poissonexp} shows that
the coefficient $c(f,g):=\pi^*\om(\xi_f^H,\xi_g^H)$, for all
$f,\,g\in\cinf(M,\R)$,
forms a degree-$1$ cocycle for this Poisson algebra, and coincides
with the
Poisson bracket induced by $\om$ on the base of $\pi:M\to X$
when $f,\,g\in\cinf(M,\R)$ are constant in the fibres.
It would be interesting to figure out the algebraic
properties of such cocycles, and deduce from them invariants of the
mixed dynamics of a quantum-classical hybrid. In the following Example,
we describe a physically relevant situation where such mixed dynamics
can be fully described.

\begin{ex}\label{SGqc}
\textbf{(Generalized Stern-Gerlach)}
Consider the physically relevant situation described in Section
\ref{SGKodsec}, where one considers
a subalgebra of observables which are diagonal in a direct sum
$E=\oplus_{j=1}^m E_j$ of
holomorphic Hermitian subbundles of $(E,h^E)$.
Then the cocycle \eqref{C1} preserves this subalgebra.
Specifically, consider $F,\,G\in\cinf(X,\Herm(E))$ such that
$F=\sum_{j=1}^m\,f_j\,P_j$
and $G=\sum_{j=1}^m\,g_j\,P_j$, for some $f_j,\,g_j\in\cinf(X,\C)$ and
$P_j\in\End(E)$ the orthogonal projector on $E_j$, for 
each $1\leq j \leq m$. Then equation \eqref{C1} gives
\begin{equation}\label{SGC}
C(F,G)=\sum_{j=1}^m\,\{f_j,g_j\}\,P_j\,,
\end{equation}
where $\{\cdot,\cdot\}$ denotes the usual Poisson bracket induced
by $\om$ on $\cinf(X,\R)$. In particular, the time evolution
of the observable $F$ under the Hamiltonian flow generated by $G$
over the quantum-classical hybrid is described by the ordinary differential
equations
\begin{equation}
\dot{f_j} = \{g_j,f_j\}\,,\quad 1\leq j \leq m\;.
\end{equation}
Theorem \ref{BMSvb} thus
shows that the dynamics on the quantum-classical
hybrid described by this subalgebra of observables
reduces to the classical dynamics induced by a collection of 
Hamiltonians related to
each different quantum state, represented by the corresponding eigenspace.
This is precisely the case for the usual Stern-Gerlach experiment,
in which case $E=X\times\C^2$ and
a classical particle goes up or down depending on either
its quantum spin is $(1,0)$ or $(0,1)\in\C^2$.
\end{ex}

\begin{rmk}\label{otherlimit}
Let us write $(E_{(r)},h^{E_{(r)}})$
for the quantum-classical hybrid associated with
$\pi:(M,r\,\om^M)\to (X,\om)$, for every $r\in\N$.
For the first line of \eqref{rexp}, we used the fact that the
Berezin-Toeplitz
quantization $T_{\pi,r}:\cinf(M,\R)\to\cinf(X,\Herm(E_{(r)}))$ of the fibres
of $\pi:(M,r\,\om^M)\to (X,\om)$ satisfies the quantum-classical
correspondence of Theorem \ref{BMSvb} for $E=\C$
in each fibre as $r\to+\infty$.
For the second line
of \eqref{rexp}, we used furthermore a technical
result of Ma and Zhang in \cite[Th.\,0.4]{MZ07}, where it is also shown that
the trace-free part $R^{E_{(r)}}_0\in\Om^2(X,\End E_{(r)})$
of the Chern curvature
of $(E_{(r)},h^{E_{(r)}})$
satisfies the following asymptotic expansion in the operator norm
as $r\to+\infty$,
\begin{equation}
\frac{1}{r}R^{E_{(r)}}_0=T_{\pi,r}(\rho)+O(r^{-1})\,,
\end{equation}
where $\rho\in\cinf(M,\Lambda^2T^HM^*)$ is the \emph{symplectic curvature}
of the Hamiltonian fibration $\pi:(M,\om^M)\to (X,\om)$, as described
for instance in \cite{Pol96}.
This answers a question of Savelyev and Shelukhin in
\cite[Rmk.\,7.2]{SS20}.
\end{rmk}



\providecommand{\bysame}{\leavevmode\hbox to3em{\hrulefill}\thinspace}
\providecommand{\MR}{\relax\ifhmode\unskip\space\fi MR }
\providecommand{\MRhref}[2]{%
  \href{http://www.ams.org/mathscinet-getitem?mr=#1}{#2}
}
\providecommand{\href}[2]{#2}

\Addresses

\end{document}